\newcommand{\PPo}[1]{\PP \left[\,#1\,\right]}
\newcommand{\PPol}[1]{\PP_{Z_{\tau}=\ell} \left[\,#1\,\right]}
\newcommand{\Ex}[1]{\mathbb{E} \left[\, #1\,\right]}
\newcommand{\Var}[1]{\mathrm{Var}\left[#1\right]}
\newcommand{\PP}{\mathbb{P}}
\newcommand{\numdeg}{R}
\newcommand{\ratdeg}{r}
\newcommand{\smalldeg}{Q}
\newcommand\xvec{{\boldsymbol{x}}}
\newtheorem{theorem}{Theorem}[section]
\newtheorem{lemma}[theorem]{Lemma}
\newtheorem{remark}[theorem]{Remark}
\title{Do random initial degrees suppress concentration in preferential attachment graphs?}
\author{T.~Makai$^{1}$, F.~Polito$^{2}$ and L.~Sacerdote$^{2}$ \\ \\
\footnotesize{1 -- Mathematisches Institut, Ludwig-Maximilians-Universität München, Germany} \\ \footnotesize{makai@math.lmu.de} \\ \footnotesize{2 -- Mathematics Department ``G.~Peano'', University of Torino, Italy} \\ \footnotesize{\{federico.polito, laura.sacerdote\}@unito.it}}
\begin{document}

    \maketitle

    \begin{abstract} 
        \noindent We consider the open problem concerning the possible lack of concentration of the degree distribution in preferential attachment graphs with random initial degree, when its distribution is characterized by extremely heavy tails of power-law type.
        We show that the addition of such a large number of edges causes a significant upset of the degree distribution, leading to its non-concentration. Furthermore, we show that
        the smallest value of the exponent for which the degree distribution exhibits concentration is 2.

        \medskip    

        \noindent Keywords: Dynamic random graphs, Preferential attachment with random initial degree, heavy tails, concentration of the degree sequence.
    \end{abstract}

    \section{Introduction}

        The so-called Barabási--Albert model \cite{MR2091634}
        describes
        a dynamical random graph in which the number of vertices always increases by one at each discrete time step. 
        Additionally, in each step a constant and deterministic number of edges are added which connect the newly born vertex to the existing ones. The attachment occurs with probabilities proportional to the degree of the vertices present in the graph. In \cite{MR1824277} the initial condition is specified and the overall model is formalized mathematically. A large number of variants as well as specialized studies of this model appeared in the literature during the last two decades. This corpus is so vast that it prevents any fair citation attempt. 
        We refer only to some of the most recent: \cite{MR4467842,MR3912097,MR3161480,MR3668381,MR4269210,MR4312838,MR4522354,MR3997484,MR4195181,MR4193887}.

        Different features of such models can be analyzed. One of them which is fundamental is the dynamics of the stochastic degree process. This process, say $\{\bm{R}_t\}_{t \in \mathbb{N}}$, evolves in discrete time and for each $t$ it is an infinite dimensional vector whose elements are the proportion of vertices of each given degree at time $t$. Note that by definition the sum of the elements of $\bm{R}_t$ equals unity for each given $t$. 
        A frequently considered problem in the literature is related to the asymptotic behaviour of this process. In particular, the interest focuses on the existence of $\bm{R}_\infty = \lim_{t \to \infty} \bm{R}_t$. If such limit exists  at least in probability and it is a real deterministic infinite vector (i.e.\ it degenerates), then it is called \textit{degree distribution}.  For the Barabási--Albert model and many of its variants (see e.g.\ \cite{MR1824277,MR4142215,MR2511283,MR3776186}) the degree distribution is a real sequence decaying as a power law with some characteristic exponent. Results relative to the above-mentioned form of convergence are known as concentration results.
        As far as the number of newly added edges is deterministic and constant the concentration results for $\bm{R}_t$ always hold true (see for instance \cite{MR3617364}, Chapter 8, and the references therein).
        However, when the initial degrees are random, it is not straightforward to establish whether the concentration of the process $\{\bm{R}_t\}_{t \in \mathbb{N}}$ occurs. In fact, this heavily depends on the distributional properties of the initial degrees themselves.
        A variation of the Barabási and Albert model \cite{MR2091634,MR1824277} admitting random initial degrees was introduced by Deijfen, van den Esker, van der Hofstadt and Hooghiemstra \cite{MR2480915} and it is known as the PARID-model (Preferential Attachment with Random Initial
        Degrees).

        The PARID-model depends on two quantities, $X$ a positive integer valued random variable and $\delta$ a fixed parameter which satisfy $\delta+\min\{x: \PPo{X = x}>0\}>0$.
        The construction of the model makes use of a sequence $\{X_i\}_{i\ge 1}$ of independent copies of $X$. Further, define $\Lambda(t)=\sum_{i=1}^t X_i$.
        In the first step of the process we add two vertices $v_0$ and $v_1$ and connect these two vertices with $X_1$ edges.
        In each subsequent step $t\ge 2$ of the process a new vertex, $v_t$ along with $X_t$ edges are added.
        Each of these $X_t$ edges is generated independently in the following manner: a vertex $v_i$, where $i\le t-1$, is selected with probability
        $$
            \frac{d_i(t-1)+\delta}{2\Lambda(t-1)+t\delta},
        $$
        where $d_i(t-1)$ denotes the degree of vertex $v_i$ after step $t-1$, and the edge $\{v_i,v_t\}$ is added to the multigraph. Note that in this way loops cannot be generated.
        In contrast to the preferential attachment model described in \cite{MR1824277} the edges added to the graph in the same step have no effect on the probability that a given vertex is selected.

        In this paper we consider the degree sequence of graphs created by the PARID-model in which for simplicity we set $\delta = 0$.
        More precisely, let $\numdeg_k(t)$ be the random number of vertices of degree $k$ and $\ratdeg_k(t)=\numdeg_{k}(t)/(t+1)$ be the proportion of vertices of degree $k$.
        Our aim is to establish if it exists a sequence $b_k$ such that with high probability\footnote{with probability tending to 1 as $t\to\infty$} (whp) we have $\ratdeg_k(t)=b_k+o(1)$ for every $k\ge 1$.  

        In the context of a more general model, when $X$ is bounded this question is answered affirmatevely by Cooper and Frieze \cite{MR1966545}. In addition, Deijfen, van den Esker, van der Hofstadt and Hooghiemstra \cite{MR2480915} consider the case in which $X$ has a finite moment of order $1+\varepsilon$ for some constant $\varepsilon > 0$. In particular the finite moment condition holds if $\PPo{X=k}=k^{-\alpha}\ell(k)$, $\alpha>2$, where $\ell(k)$ is a slowly-varying function (and hence $X$ is a regularly-varying random variable of parameter $\alpha$). In this case they show that the degree distribution is still regularly-varying of parameter
        $$
            \min\{\alpha,3+\delta/\Ex{X_1}\},
        $$
        and conjecture that this should hold when $\alpha\in [1,2]$ as well (Conjecture 1.4), although it is unclear how a regularly-varying random variable with parameter $\alpha=1$ would be defined. When concentration of the empirical degree sequence can be proved, the typical choice for $b_k$ is $\lim_{t\to\infty}\Ex{\ratdeg_k(t)}$. Bhamidi \cite{bhamidi2007universal} calculates this quantity (Theorem 40), when $\alpha\in(1,2]$, but does not prove the required concentration result, leaving the question open. Here we address this problem. Interestingly, when $X$ is regularly-varying of parameter $\alpha \in (1,2)$ 
        no concentration occurs (Theorem \ref{thm:main}) the conjecture lose its significance and should be at least reformulated.
        On the contrary, if $\alpha = 2$, concentration occurs (Theorem \ref{thm:main2}) and the conjecture holds.

        The paper is organized as follows. Section \ref{results} contains our main results described by the above-mentioned theorems with some related comments, while in Section \ref{proofs} we collect their proofs together with the necessary lemmas.

    \section{Main results}\label{results}

        We first describe some useful notations which we will use extensively throughout the paper.
        In the following we will frequently need to evaluate the asymptotic behaviour of discrete-time stochastic processes. Denoting by $(H_t)_{t=1}^\infty$ a real-valued discrete-time stochastic process, the following notation will be used:
        \begin{align}\label{label}
            H_t = f(t) + o(g(t)).
        \end{align}
        Note that with this writing we put the randomness in $o(g(t))$ while $f(t)$ accounts for the deterministic temporal asymptotic behaviour. More specifically the convergence of the stochastic process $M_t = o(g(t))$ can be twofold. It can occur with high probability (that is, in probability) if
        $$
            \forall \, k>0 \quad \lim_{t \to \infty}\mathbb{P}(|M_t|<k g(t)) =1,
        $$
        or almost surely (a.s.) if
        $$
            \mathbb{P}(\{\omega \colon \forall \, k>0, \exists \, n_0 \text{ such that } \forall \,t>n_0, \, |M_t|<kg(t)\}) = 1.
        $$
        Similar interpretations will hold for $O$ and $\Omega$ notations.

        In this paper we consider $\ell(k)$ constant, i.e.\ $\mathbb{P}(X=i) = \beta(\alpha)i^{-\alpha}$, $i \in \{1,2,\dots\}$ (power-law distribution), where $\beta(\alpha)$ is the normalising constant (throughout this paper we will use $\beta$ instead of $\beta(\alpha)$ when $\alpha$ is clear from context).
        Take note that $X$ has infinite expectation for every $\alpha \in (1,2]$.

        As mentioned in the introductory section, the typical approach to the study of the limiting behaviour of the random degree sequence $\{r_k(t)\}_{k=1}^\infty$ relies on the presence of the property of concentration towards a limiting deterministic vector. The following theorem shows that concentration does not hold in the case of $\alpha \in (1,2)$.
        
        \begin{theorem}\label{thm:main}
            Let $\alpha \in (1,2)$ and consider the PARID-model with parameters $\delta=0$ and $X$ which follows a power-law distribution with exponent $\alpha$. 
            Then for any fixed $k\ge 1$ there exists no constant $a \in [0,1]$ such that $\ratdeg_k(t)\stackrel{p}{\to}a$ as $t\to \infty$.
        \end{theorem}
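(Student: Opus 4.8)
The plan is to prove the formally stronger statement that $\Var{\ratdeg_k(t)}\not\to0$ as $t\to\infty$. This suffices: since $\ratdeg_k(t)\in[0,1]$, convergence in probability to a constant $a$ would force $L^2$-convergence and hence $\Var{\ratdeg_k(t)}\to0$. Writing $\bm X=(X_1,\dots,X_t)$ for the initial degrees and $\Lambda(m)=\sum_{i\le m}X_i$, the law of total variance gives $\Var{\ratdeg_k(t)}\ge\Var{\Ex{\ratdeg_k(t)\mid\bm X}}$, so it is enough to show that the conditional mean $g_k(t):=\Ex{\ratdeg_k(t)\mid\bm X}$, viewed as a function of the random vector $\bm X$, does not concentrate.

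The first step is to obtain an explicit handle on $g_k(t)$. Fix a vertex $v_s$, $s\ge2$. Conditionally on $\bm X$ and on the event that $v_s$ has not yet acquired any further edge by the start of step $t'$ (so that $d_s(t'-1)=X_s$), each of the $X_{t'}$ edges of step $t'$ independently avoids $v_s$ with probability $1-\tfrac{X_s}{2\Lambda(t'-1)}$, and this does not depend on the remainder of the configuration. Multiplying over $t'=s+1,\dots,t$ yields $\PPo{v_s\text{ has degree }X_s\text{ at time }t\mid\bm X}=\prod_{t'=s+1}^{t}\bigl(1-\tfrac{X_s}{2\Lambda(t'-1)}\bigr)^{X_{t'}}$. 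Restricting the count of degree-$k$ vertices to those born at degree exactly $k$ and never hit afterwards,
\begin{equation*}
g_k(t)\ \ge\ \frac{1}{t+1}\sum_{s=2}^{t}\Ind{X_s=k}\prod_{t'=s+1}^{t}\Bigl(1-\frac{k}{2\Lambda(t'-1)}\Bigr)^{X_{t'}} ,
\end{equation*}
while for upper bounds one uses that a vertex of degree exceeding $k$ can never again have degree $k$.

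Next I would exhibit two $\bm X$-measurable events, each of probability bounded away from $0$, on which $g_k(t)$ lies in disjoint intervals. Let $L_t$ be an index attaining $\max_{i\le t}X_i$, fix a small constant $\eta=\eta(k,\alpha)>0$, and set
\begin{equation*}
A_t:=\Bigl\{X_{L_t}\ge\bigl(1-\tfrac1{100k}\bigr)\Lambda(t)\Bigr\}\cap\bigl\{L_t\ge(1-\eta)t\bigr\} .
\end{equation*}
On $A_t$ one checks $X_{L_t}/\Lambda(L_t-1)\ge99k$, so at step $L_t$ every vertex $v_s$ with $s<L_t$ receives a number of new edges stochastically dominating $\Bin{X_{L_t},\tfrac1{2\Lambda(L_t-1)}}$, whose mean is at least $49k$; a Chernoff bound then shows that, conditionally on $\bm X$, such a vertex has degree $\le k$ after step $L_t$ — and hence degree $k$ at time $t$ — with probability at most $e^{-k}$. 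As at most $\eta t$ vertices are born at times $\ge L_t$, we get $g_k(t)\le e^{-k}+\eta+o(1)\le2\eta$ on $A_t$. For the other interval set $B_t:=\{\Lambda(t)\le2\Lambda(\lfloor t/2\rfloor)\}$; then for every $s\in(t/2,t]$ one has $\sum_{t'=s+1}^{t}\tfrac{k}{2\Lambda(t'-1)}X_{t'}\le\tfrac k2\cdot\tfrac{\Lambda(t)-\Lambda(s)}{\Lambda(s)}\le\tfrac k2$, so the product appearing in the displayed lower bound is, for each such $s$, at least $e^{-k/2}-o(1)$; since a law-of-large-numbers estimate gives $(1+o_p(1))\beta k^{-\alpha}\,t/2$ indices $s\in(t/2,t]$ with $X_s=k$, this yields $g_k(t)\ge\tfrac14\beta k^{-\alpha}e^{-k/2}-o_p(1)$ on $B_t$ intersected with the corresponding concentration event. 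Choosing $\eta<\tfrac18\beta k^{-\alpha}e^{-k/2}$ makes the two intervals disjoint, and a standard two-point argument then bounds $\Var{g_k(t)}$ below by a positive constant — provided that $\liminf_t\PPo{A_t}>0$ and $\liminf_t\PPo{B_t}>0$.

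Establishing these two probability lower bounds is the main obstacle, and it is where the hypothesis $\alpha\in(1,2)$ enters decisively: $X$ has a regularly varying tail of index $\alpha-1\in(0,1)$, so $\Lambda(t)$ belongs to the domain of attraction of a positive $(\alpha-1)$-stable law. Classical extreme-value theory then gives that $\bigl(X_{L_t}/\Lambda(t),\,L_t/t\bigr)$ converges in distribution to a pair $(W,U)$ with $U$ uniform on $[0,1]$ and independent of $W$, and with $W$ supported on all of $(0,1)$; in particular $\PPo{W\ge1-\tfrac1{100k}}>0$, which yields $\liminf_t\PPo{A_t}\ge\eta\,\PPo{W\ge1-\tfrac1{100k}}>0$. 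For $B_t$ one uses that $\Lambda(t)-\Lambda(\lfloor t/2\rfloor)$ is independent of $\Lambda(\lfloor t/2\rfloor)$ with asymptotically the same law, so $\PPo{B_t}$ stays bounded away from $0$. The remaining ingredients — the expansions $\log(1-x)=-x+O(x^2)$ controlling the survival products, and the bookkeeping needed to bound the accumulated contribution of moderate jumps on $B_t$ — are routine; the case $k=1$ is the cleanest instance and already exhibits every idea.
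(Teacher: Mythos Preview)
Your argument is essentially correct and shares the paper's core insight --- that a single $X_\tau$ can dominate $\Lambda(t)$ with probability bounded away from zero, wiping out most low-degree vertices --- but packages it differently. The paper argues directly with $r_k(t)$ via a case split on the putative limit $a$: for $a>0$ it proves (Lemma~3.3, built from elementary truncation, Markov, and Chernoff bounds in Lemmas~3.1--3.2) that with probability bounded below there is a step $\tau\in(t,(1+\eta)t]$ with $Q_k(\tau)<a\tau-2\eta\tau$, forcing $r_k((1+\eta)t)<a-\eta$; for $a=0$ it shows directly that $r_k(2t)$ exceeds a fixed positive constant with positive probability. You instead lower-bound $\Var{\Ex{r_k(t)\mid\bm X}}$ by exhibiting the two $\bm X$-measurable events $A_t,B_t$, and you replace the paper's hands-on estimates by an appeal to stable-law and extreme-value theory (the LePage/Poisson--Dirichlet representation giving that $X_{L_t}/\Lambda(t)$ has a limit supported on all of $(0,1)$, and exchangeability giving $L_t/t\to U$ uniform and independent). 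Your route is conceptually cleaner --- no case split on $a$, and the variance-of-conditional-mean device is elegant --- at the cost of importing heavier external results; the paper's route is entirely self-contained.

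One genuine slip in the constants: you write $g_k(t)\le e^{-k}+\eta+o(1)\le 2\eta$ on $A_t$ and then require $\eta<\tfrac18\beta k^{-\alpha}e^{-k/2}$, which forces $e^{-k}<\tfrac18\beta k^{-\alpha}e^{-k/2}$; this fails for small $k$ (already $k=1$, since $\beta=1/\zeta(\alpha)<1$). The fix is immediate: with mean $\ge 49k$ the Chernoff bound actually gives probability $\le e^{-20k}$ (say), not $e^{-k}$, and more generally you can push the threshold in $A_t$ from $1-\tfrac1{100k}$ to $1-\epsilon$ with $\epsilon$ chosen small enough (depending on $k,\alpha$) that the disjoint-interval condition holds; your claim that $W$ has full support on $(0,1)$ guarantees $\PPo{A_t}$ stays bounded away from zero for any such threshold.
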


        On the other hand, if $\alpha = 2$, concentration holds
        as stated in the following theorem. It follows that in this case Conjecture 1.4 in \cite{MR2480915} holds.

        \begin{theorem}\label{thm:main2}
            Consider the PARID-model with parameters $\delta=0$ and $X$ with power-law distribution of exponent $2$. Then for any $k\ge 1$ we have $r_k(t) \overset{p}{\to} b_k$, where
            \begin{align}
                b_k= \frac{2\beta(2)}{k(k+1)(k+2)}\sum_{i=1}^{k}\left(1+\frac{1}{i}\right).
            \end{align}
        \end{theorem}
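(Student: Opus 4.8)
The plan is the classical two-step scheme for degree-sequence concentration in preferential attachment---first pin down the limit through the expected counts, then prove concentration around them---the novelty being that for $\alpha=2$ the total degree $2\Lambda(t)$ grows like $t\log t$ rather than linearly, and this has to be tracked throughout. Write $N_k(t)=(t+1)r_k(t)$ and $\beta=\beta(2)$. Since $\mathbb{P}(X>x)=(1+o(1))\beta/x$, one has $\mathbb{E}[X\wedge L]=(1+o(1))\beta\log L$ and $\mathbb{E}[(X\wedge L)^2]=(1+o(1))2\beta L$. Replacing each $X_i$ by $\tilde X_i:=X_i\wedge(t\log t)$ alters the graph only on an event of probability $\le t\,\mathbb{P}(X>t\log t)=O(1/\log t)\to 0$, so it suffices to treat the truncated model; for it, Chebyshev's inequality gives $\tilde\Lambda(s):=\sum_{i\le s}\tilde X_i=(1+o(1))\beta s\log s$ simultaneously for all $s\in[t^{\rho},t]$ (any fixed $\rho>0$), the steps $s<t^{\rho}$ being negligible. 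We condition on this good event $\mathcal G$.

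\textbf{The expected counts.} Given the configuration after step $t-1$ and $\tilde X_t$, the new vertex $v_t$ has degree exactly $\tilde X_t$, hence lands in class $k$ with probability $\mathbb{P}(X=k)=\beta k^{-2}$ (for fixed $k$, $t$ large), while each of the $\tilde X_t$ new edges independently raises the degree of an old degree-$j$ vertex by one with probability $j/(2\tilde\Lambda(t-1))$. Thus the expected number of old degree-$k$ vertices leaving class $k$ is $N_k(t-1)\,\mathbb{E}[\,1-(1-k/(2\tilde\Lambda(t-1)))^{\tilde X}\,]=(1+o(1))N_k(t-1)k/(2t)$ on $\mathcal G$ (using $\mathbb{E}[\min(1,kX/(2\tilde\Lambda(t-1)))]=(1+o(1))k\beta\log\tilde\Lambda(t-1)/(2\tilde\Lambda(t-1))\sim k/(2t)$), and transitions in which a vertex collects two or more of the new edges contribute per vertex only $O(\mathbb{E}[(X\wedge L)^2]/L^2)=O(1/(t\log t))$---this is precisely where the extra $\log t$ in $\tilde\Lambda\asymp t\log t$ kills the second-moment term. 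Writing $p_k(t)=\mathbb{E}[N_k(t)\mathbf{1}_{\mathcal G}]$ and absorbing the off-$\mathcal G$ part into $o(t)$,
\begin{equation*}
p_k(t)=p_k(t-1)\Bigl(1-\tfrac{k}{2t}\Bigr)+p_{k-1}(t-1)\tfrac{k-1}{2t}+\beta k^{-2}+o(1),\qquad p_0\equiv 0 ,
\end{equation*}
and a standard recursion argument (induction on $k$, summing the $o(1)$ errors from $t^{\rho}$ onward so that the initial condition is forgotten) yields $p_k(t)/(t+1)\to b_k$, where $\tfrac{k+2}{2}b_k=\tfrac{k-1}{2}b_{k-1}+\beta k^{-2}$ and $b_1=2\beta/3$. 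The substitution $b_k=c_k/(k(k+1)(k+2))$ turns this into $c_k-c_{k-1}=2\beta(1+1/k)$ with $c_1=4\beta$, so $c_k=2\beta\sum_{i=1}^k(1+1/i)$ and $b_k$ is exactly the claimed expression.

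\textbf{Concentration.} It remains to prove $\mathrm{Var}(N_k(t))=o(t^2)$; then $r_k(t)\to b_k$ in probability by Chebyshev, and passing back to the untruncated model costs only the $O(1/\log t)$ from above. Let $\mathcal F_s$ record $\tilde X_1,\dots,\tilde X_s$ and the edge-choices of the first $s$ steps, and put $Z_s=\mathbb{E}[N_k(t)\mid\mathcal F_s]$, so $\mathrm{Var}(N_k(t))=\sum_{s\le t}\mathbb{E}[(Z_s-Z_{s-1})^2]$. Resampling the outcome of step $s$ (both $\tilde X_s$ and its edges) and coupling the two continuations maximally, the discrepancy between the two copies of $N_k(t)$ amplifies, per later step $r$, by a factor $1+O(\tilde X_r/\tilde\Lambda(r-1))=1+O(1/r)$ on $\mathcal G$, so a unit perturbation at step $s$ reaches time $t$ with amplification $O(\log(t/s))$. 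The perturbation created at step $s$---the number of class $k$ or $k-1$ vertices it touches---is, conditionally on $\tilde X_s$, Binomial with success probability $q_s=\Theta(1/\log s)$ (there being $\Theta(s)$ such vertices out of a total degree $\asymp s\log s$), so its conditional variance is $O(q_s\mathbb{E}[\tilde X_s]+q_s^2\mathrm{Var}(\tilde X_s))=O(\log t/\log s+t\log t/\log^2 s)$, the dominant term reflecting the heavy fluctuations of the per-step edge count $\tilde X_s$ diluted by $q_s^2=\Theta(1/\log^2 s)$. Multiplying by the squared amplification $O(\log^2(t/s))$ and summing over $s$---the dominant contributions being $s\asymp t$, where the amplification is $O(1)$---gives $\mathrm{Var}(N_k(t))=O(t\log t)\sum_{s\le t}\log^{-2}s+O(\mathrm{polylog}\,t)\cdot t=O(t^2/\log t)=o(t^2)$, as needed; this also requires the a priori bounds $0\le N_j(s)\le s+1$ to control the rare off-$\mathcal G$ and very large-$\tilde X_s$ contributions.

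\textbf{Where the difficulty lies.} The genuinely new point---absent from the bounded or finite-$(1+\varepsilon)$-moment cases treated previously---is this concentration step: since $X$ has infinite mean a single step may add $\Omega(t)$ edges, the Doob differences $Z_s-Z_{s-1}$ are unbounded, and a direct Azuma--Hoeffding estimate is unavailable. Everything hinges on truncating $X$ at scale $t\log t$ and on the superlinear growth $\Lambda(t)\asymp t\log t$ diluting the influence of each step on a fixed degree class by a factor $1/\log t$---exactly enough to push the variance just below $t^2$ (for $\alpha<2$ the analogous dilution is too weak, consistently with Theorem~\ref{thm:main}). Making this quantitative, in particular bounding the amplification of perturbations along the process and handling the rare very large steps, is the technical heart of the argument.
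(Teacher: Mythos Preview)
Your overall strategy---truncate, derive the recursion for the expected counts, then prove concentration---is exactly the paper's, and your treatment of the expectation is essentially the same (the paper truncates at $t\log\log t$ rather than $t\log t$, a cosmetic difference). The concentration step, however, has a real gap.

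The specific error is in the line ``the discrepancy \ldots amplifies, per later step $r$, by a factor $1+O(\tilde X_r/\tilde\Lambda(r-1))=1+O(1/r)$ on $\mathcal G$''. Your good event $\mathcal G$ controls the partial sums $\tilde\Lambda(s)$, not the individual increments $\tilde X_r$; on $\mathcal G$ one can still have $\tilde X_r$ as large as the truncation level $t\log t$, so $\tilde X_r/\tilde\Lambda(r-1)$ is \emph{not} $O(1/r)$ pointwise. At best you get this in expectation, but the amplification is multiplicative over $r$, and $\mathbb{E}\bigl[\prod_r(1+c_r)\bigr]$ is not controlled by $\prod_r(1+\mathbb{E}[c_r])$. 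More seriously, when you resample step $s$ you do not only perturb which low-degree vertices were hit: you change $\tilde\Lambda(r)$ by $|\tilde X_s-\tilde X_s'|$ for every $r\ge s$, which shifts \emph{all} future attachment probabilities. Your coupling/amplification heuristic tracks the propagation of a few flipped degree-$k$ labels, but it does not account for this global drift in the normalising constant---and $|\tilde X_s-\tilde X_s'|$ can be of the same order as $\tilde\Lambda(s)$ itself.

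This is precisely the difficulty the paper isolates, and it resolves it by splitting the filtration in two. First one exposes only $Z_1,\dots,Z_t$ and shows $\mathbb{E}[R_k(t)\mid Z_1,\dots,Z_t]$ is concentrated: the key technical input (Lemma~\ref{lem:condchange}) is a direct product-by-product bound on $\bigl|\mathbb{P}[d_i(t)=k\mid \mathcal Y]-\mathbb{P}[d_i(t)=k\mid \mathcal Y']\bigr|$ when a single coordinate $y_\tau$ is altered, which simultaneously handles the extra edges at step $\tau$ \emph{and} the shifted denominator $L(r)$ at all later $r$. Summing over $i$ gives martingale increments that are bounded in terms of $|Z_\tau-Z_\tau'|$, and a variance form of Azuma--Hoeffding finishes this stage. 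Second, conditionally on the $Z_i$'s, the remaining randomness is the edge placement, and there the classical round-by-round exposure has Lipschitz constant $2$. Your single-filtration coupling is trying to do both stages at once; to make it work you would need a quantitative bound of the Lemma~\ref{lem:condchange} type, and once you have that, the two-stage decomposition is the natural way to package it.
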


        While we defer the proofs of the above two theorems to the following section, in the remainder of this sections we highlight the difficulties which arise for the case $\alpha\in(1,2]$.
        Indeed, while in the case $\alpha>2$ we know $\Ex{X}$ is bounded, this is no longer true when $\alpha\in(1,2]$.
        This prevents the direct use of the expectation of $X$ for our aims. Hence, we develop a technique to cope with this issue. Specifically, in order to avoid having to work with random variables with infinite expectation, we truncate $X$ at a sufficiently large value. We underline that a key-point in the proofs is that the chosen truncation point depends on $t$ in a suitable way. This means that we condition on $X$ not realising values above the truncation point. 
     
        The leading idea of the analysis of the process is that inserting an excessive number of edges repeatedly into the graph will cause a significant upset in the degree distribution, which could potentially lead to non concentration.
        We expect that for certain values of $\alpha$ there will be several steps where the number of edges inserted is of the same order of magnitude as the total number of edges inserted so far. Indeed, for $\alpha \in (1,2)$ the probability that such large number of edges is inserted is bounded away from zero, leading to the non concentration of the degree sequence. If instead $\alpha = 2$ such probability is $o(1)$, which entails the concentration of the degree sequence. 
        The two cases correspond in fact to the two Theorems \ref{thm:main} and \ref{thm:main2}.

        In comparison \cite{MR2480915}, also starts with truncating $X$ and establishing the degree sequence for the truncated model. However, they truncate at a level where some edges are missing and show that these missing edges have an insignificant effect on the degree sequence, when $\alpha>2$. However, the number of missing edges in the truncated model becomes too large to apply this argument when $\alpha\le 2$. Increasing the truncation threshold, would lead to fewer missing edges, but then their analysis of the truncated model no longer holds. Therefore, we raise the truncation threshold and provide an alternative analysis for the degree sequence of the truncated model.

    \section{Proofs}\label{proofs}

        We first prove the necessary lemmas for Theorems~\ref{thm:main} and \ref{thm:main2} in Sections~\ref{lemm1} and \ref{lemm2}, respectively, while Sections~\ref{th:ma:pr} and \ref{th:ma2:pr} contain the proofs of the theorems. For the sake of clarity, Figures \ref{fig} and \ref{twofig} show sketches of the relationships between lemmas in the proofs of Theorems \ref{thm:main} and \ref{thm:main2}, respectively.

        \subsection{Proof of Theorem~\ref{thm:main}}

            \subsubsection{Lemmas}\label{lemm1}

                \begin{figure}
                    \centering
                    \includegraphics[scale=.5]{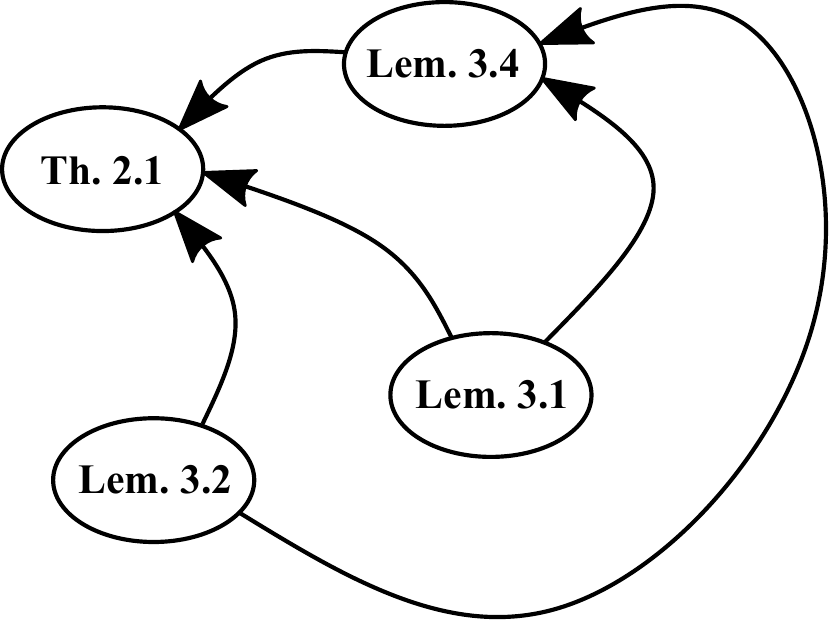}
                    \caption{\label{fig}Relationship        between lemmas in the proof of Theorem~\ref{thm:main}.}
                \end{figure}

                We start by focusing on the number of edges in the graph. It will turn out that the ratio of  edges to vertices is not concentrated as $t\to \infty$. Later we will exploit this property in the proof of Theorem~\ref{thm:main}.

                For $\alpha\in(1,2)$ define
                \begin{align}\label{eq:def:Cs}
                    c:=c(\alpha)=\left(\frac{\alpha-1}{8\beta}\right)^{\frac{1}{1-\alpha}} \quad \mbox{and} \quad C:=C(\alpha,t)=\frac{\beta (2c)^{2-\alpha}}{(2-\alpha)\PPo{X< ct^{1/(\alpha-1)}+1}}.
                \end{align}

                As previously mentioned in Section \ref{results}, our aim is to describe a meaningful truncation of the random variable $X$ depending on $t$ in a suitable manner. 
                In Lemma~\ref{lem:upperbound} below, we introduce such a truncation and establish that it is unlikely for the graph to have a very large number of edges by time $t$.

                \begin{lemma}\label{lem:upperbound}
                    Let $\alpha\in(1,2)$. 
                    For any positive real $z$ and $t\ge c^{1-\alpha}$ we have
                    $$
                        \PPo{\sum_{i=1}^t X_i < z C t^{1/(\alpha-1)}}> \frac{7}{8}-\frac{1}{z}.
                    $$
                \end{lemma}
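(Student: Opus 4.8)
The plan is to control the truncated sum $\sum_{i=1}^t X_i \mathbf{1}(X_i < ct^{1/(\alpha-1)}+1)$ via a first-moment (Markov) argument, and then argue separately that with probability at least $7/8$ none of the $X_i$ exceeds the truncation threshold $M:=ct^{1/(\alpha-1)}+1$, so that the truncated sum coincides with $\sum_{i=1}^t X_i$. Writing $\PPo{\cdot}$ for the unconditional probability, the union bound gives
\begin{align*}
\PPo{\exists\, i\le t:\ X_i \ge M} \le t\,\PPo{X \ge M} = t\,\beta \sum_{j\ge M} j^{-\alpha} \le t\,\beta \int_{M-1}^{\infty} x^{-\alpha}\,dx = \frac{\beta}{\alpha-1}\, t\, (M-1)^{1-\alpha}.
\end{align*}
With $M-1 = ct^{1/(\alpha-1)}$ we get $t(M-1)^{1-\alpha} = t\, c^{1-\alpha} t^{-1} = c^{1-\alpha}$, and by the definition of $c$ in \eqref{eq:def:Cs}, $\frac{\beta}{\alpha-1}c^{1-\alpha} = \frac{\beta}{\alpha-1}\cdot\frac{\alpha-1}{8\beta} = \frac18$. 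Hence all $t$ variables stay below $M$ with probability at least $7/8$.

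Next I would bound the expectation of the truncated variable. Let $Y = X\mathbf{1}(X < M)$; then
\begin{align*}
\Ex{Y} = \beta \sum_{j=1}^{\lceil M\rceil - 1} j\cdot j^{-\alpha} = \beta \sum_{j=1}^{\lceil M\rceil-1} j^{1-\alpha} \le \beta \left(1 + \int_1^{M} x^{1-\alpha}\,dx\right) \le \beta\left(1 + \frac{M^{2-\alpha}}{2-\alpha}\right).
\end{align*}
Since $M = ct^{1/(\alpha-1)}+1 \le 2c\,t^{1/(\alpha-1)}$ for $t\ge c^{1-\alpha}$ (which forces $t^{1/(\alpha-1)}\ge 1/c$, so the additive $1$ is dominated by $ct^{1/(\alpha-1)}$), one has $M^{2-\alpha} \le (2c)^{2-\alpha} t^{(2-\alpha)/(\alpha-1)}$, and absorbing the constant term similarly, $\Ex{Y} \le \frac{\beta(2c)^{2-\alpha}}{2-\alpha}\, t^{(2-\alpha)/(\alpha-1)}$ for $t$ large; a slightly more careful bookkeeping gives this for all $t\ge c^{1-\alpha}$. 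Conditioning on the event $\{X < M\}$ only inflates this by the factor $1/\PPo{X<M}$, so with $C$ as defined in \eqref{eq:def:Cs} we get $\Ex{Y \mid X < M} \le C\, t^{(2-\alpha)/(\alpha-1)-1}$... — here I should be careful with exponents: $t^{(2-\alpha)/(\alpha-1)} = t^{1/(\alpha-1)}\cdot t^{-1}$, so in fact $\Ex{Y}\le \frac{\beta(2c)^{2-\alpha}}{2-\alpha} t^{1/(\alpha-1)-1}$ and therefore, summing over $i=1,\dots,t$ conditionally on all $X_i<M$, the conditional expectation of $\sum X_i$ is at most $C\,t^{1/(\alpha-1)}$.

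Finally I would assemble the pieces. Let $A$ be the event that all $X_i < M$; then $\PPo{A} \ge 7/8$. On $A$, $\sum_{i=1}^t X_i$ equals the sum of the conditioned truncated variables, and by Markov's inequality applied to the conditional law,
\begin{align*}
\PPo{\sum_{i=1}^t X_i \ge zC t^{1/(\alpha-1)} \ \Big|\ A} \le \frac{\Ex{\sum_{i=1}^t X_i \mid A}}{zC t^{1/(\alpha-1)}} \le \frac{1}{z}.
\end{align*}
Combining, $\PPo{\sum_{i=1}^t X_i < zCt^{1/(\alpha-1)}} \ge \PPo{A}\left(1 - \frac1z\right) \ge \PPo{A} - \frac1z \ge \frac78 - \frac1z$, which is the claim. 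The main obstacle is purely bookkeeping: making sure the crude replacements $M-1 \mapsto ct^{1/(\alpha-1)}$ and $M \mapsto 2ct^{1/(\alpha-1)}$, together with the integral bounds on the partial sums of $j^{1-\alpha}$ and $j^{-\alpha}$, are valid uniformly for \emph{all} $t \ge c^{1-\alpha}$ rather than just asymptotically, and that the constant $C$ defined in \eqref{eq:def:Cs} — which carries the $1/\PPo{X < ct^{1/(\alpha-1)}+1}$ factor — exactly matches the constant produced by this computation. No single step is deep; the care is in the constants and in the range of $t$.
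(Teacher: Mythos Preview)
Your proposal is correct and follows essentially the same route as the paper: bound $\PPo{\exists\, i\le t: X_i\ge ct^{1/(\alpha-1)}+1}\le 1/8$ via the integral tail bound and union bound, bound the conditional expectation of the sum by $Ct^{1/(\alpha-1)}$ via the integral estimate for $\sum j^{1-\alpha}$ (using $t\ge c^{1-\alpha}$ to replace $M$ by $2ct^{1/(\alpha-1)}$), and then combine Markov with $\PPo{A}(1-1/z)\ge 7/8-1/z$. The only cosmetic difference is that the paper defines $Y$ directly as the \emph{conditioned} variable (with normalizing constant $\beta'=\beta/\PPo{X<M}$), whereas you first write the truncated variable and then divide by $\PPo{X<M}$; the resulting constant is exactly the $C$ of \eqref{eq:def:Cs}, so your bookkeeping worry resolves cleanly.
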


                \begin{proof}
                    Since the function $\beta x^{-\alpha}$ is monotone decreasing, for any $y>1$ we have 
                    $$
                        \PPo{X\ge y} \le \beta \int_{y-1}^{\infty}x^{-\alpha}dx.
                    $$
                    Therefore
                    $$
                        \PPo{X\ge c t^{1/(\alpha-1)}+1}\le \beta \int_{c t^{1/(\alpha-1)}}^{\infty}x^{-\alpha}dx=\frac{\beta}{\alpha-1}c^{1-\alpha}t^{-1}\stackrel{\eqref{eq:def:Cs}}{=}(8t)^{-1}.
                    $$
                    Let $\mathcal{G}$ be the event that for every $1\le \tau \le t$ we have $X_\tau<ct^{1/(\alpha-1)}+1$. Then by the union bound on the complement of $\mathcal{G}$ we have 
                    \begin{equation}\label{eq:nolargedeg}
                        \PPo{\mathcal{G}}\ge 7/8.
                    \end{equation}
	
                	Denote by $Y$ the random variable $X$ truncated at $ct^{1/(\alpha-1)}+1$, that is for every $i< ct^{1/(\alpha-1)}+1$ we have 
                    $$
                        \PPo{Y=i}=\frac{\beta}{\PPo{X< ct^{1/(\alpha-1)}+1}} i^{-\alpha}=\beta' i^{-\alpha},
                    $$
                    where $\beta'=\beta \PPo{X< ct^{1/(\alpha-1)}+1}^{-1}$.
                    Let $\{i_1,\ldots,i_t\}$ be a set of integers such that for every $1\le \tau \le t$ we have $1\le i_\tau < ct^{1/(\alpha-1)}+1$.
                    Then due to the independence of $X_\tau$, for $1\le \tau \le t$ we have 
                    $$
                        \PPo{\bigcap_{\tau=1}^t X_\tau=i_\tau\mid \mathcal{G}}=\prod_{\tau=1}^t\PPo{Y=i_\tau}.
                    $$
                    Let $\mathcal{S}$ be the support of $Y$.
                    Since $x^{1-\alpha}$ is monotone decreasing,
                    \begin{align*}
                        \Ex{Y}&=\sum_{i\in \mathcal{S}}\beta'i^{1-\alpha}=\beta'\left(1+\sum_{i\in \mathcal{S}\backslash\{1\}}i^{1-\alpha}\right)\\
                        &\le \beta'\left(1+\int_{1}^{ct^{1/(\alpha-1)}+1} x^{1-\alpha} dx\right)
                        =\beta'\left(1+\frac{(ct^{1/(\alpha-1)}+1)^{2-\alpha}}{2-\alpha}-\frac{1}{2-\alpha}\right)\\
                        & \stackrel{t\ge c^{1-\alpha}}{\le} \beta' \frac{(2ct^{1/(\alpha-1)})^{2-\alpha}}{2-\alpha}\stackrel{\eqref{eq:def:Cs}}{=}Ct^{\frac{2-\alpha}{\alpha-1}}.
                    \end{align*}
                    Therefore
                    $$
                        \Ex{\sum_{i=1}^{t} X_i\mid \mathcal{G}}\le Ct^{1/(\alpha-1)}
                    $$
                    and by Markov's inequality
                    $$
                        \PPo{\sum_{i=1}^{t} X_i< zCt^{1/(\alpha-1)}\mid \mathcal{G}}> 1-\frac{1}{z}.
                    $$
                    Together with \eqref{eq:nolargedeg} we have
                    $$
                        \PPo{\sum_{i=1}^{t} X_i < zCt^{1/(\alpha-1)}}
                        > \frac{7}{8}\left(1-\frac{1}{z}\right)\ge \frac{7}{8}-\frac{1}{z}.
                    $$
                \end{proof}

                In the next lemma we investigate the possibility of inserting a number of edges of the same order of magnitude as the total number of edges inserted so far (given in Lemma \ref{lem:upperbound}). In particular,
                the probability that this happens is $\Omega(t^{-1})$.

                \begin{lemma}\label{lem:largevalueprob}
                    Let $\alpha\in(1,2)$.
                    For any positive integer $t$ and positive real $\gamma$ satisfying $\gamma C t^{1/(\alpha-1)}>1$, we have
                    $$
                        \PPo{X\ge \gamma C  t^{1/(\alpha-1)}}\ge \beta\frac{(2\gamma C)^{1-\alpha}}{\alpha-1}t^{-1}.
                    $$
                \end{lemma}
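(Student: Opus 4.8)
The plan is to bound the tail probability $\PPo{X \ge \gamma C t^{1/(\alpha-1)}}$ from below by isolating the contribution of a single atom, or better, by using the same integral comparison as in Lemma~\ref{lem:upperbound} but in the reverse direction. Write $m = \lceil \gamma C t^{1/(\alpha-1)} \rceil$, so that $\PPo{X \ge \gamma C t^{1/(\alpha-1)}} = \PPo{X \ge m} = \sum_{i \ge m} \beta i^{-\alpha}$. Since $x^{-\alpha}$ is monotone decreasing, each term satisfies $i^{-\alpha} \ge \int_i^{i+1} x^{-\alpha}\,dx$, and summing over $i \ge m$ gives $\sum_{i \ge m} \beta i^{-\alpha} \ge \beta \int_m^\infty x^{-\alpha}\,dx = \frac{\beta}{\alpha-1} m^{1-\alpha}$.

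It then remains to check that $m^{1-\alpha} \ge (2\gamma C)^{1-\alpha} t^{-1}$, equivalently (since $1-\alpha < 0$) that $m \le 2\gamma C t^{1/(\alpha-1)}$. This is where the hypothesis $\gamma C t^{1/(\alpha-1)} > 1$ is used: for any real $x > 1$ we have $\lceil x \rceil < x + 1 < 2x$, so $m = \lceil \gamma C t^{1/(\alpha-1)} \rceil < 2\gamma C t^{1/(\alpha-1)}$, as required. Combining the two displayed inequalities yields
\[
    \PPo{X \ge \gamma C t^{1/(\alpha-1)}} \ge \frac{\beta}{\alpha-1} m^{1-\alpha} \ge \frac{\beta}{\alpha-1} (2\gamma C)^{1-\alpha} t^{-1} = \beta \frac{(2\gamma C)^{1-\alpha}}{\alpha-1} t^{-1},
\]
which is exactly the claimed bound.

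There is essentially no obstacle here; the lemma is a routine tail estimate, and the only thing to be careful about is the direction of the integral comparison (lower-bounding a decreasing-function sum by the integral over $[m,\infty)$ rather than $[m-1,\infty)$) and the fact that raising to the negative power $1-\alpha$ reverses the inequality $m < 2\gamma C t^{1/(\alpha-1)}$. One could equally phrase the first step as $\PPo{X \ge m} \ge \beta \sum_{i=m}^{\infty} i^{-\alpha}$ and invoke the standard integral test; either way the constant $(2\gamma C)^{1-\alpha}/(\alpha-1)$ drops out with the factor $t^{-1}$ coming from $\big(\gamma C t^{1/(\alpha-1)}\big)^{1-\alpha} = (\gamma C)^{1-\alpha} t^{-1}$.
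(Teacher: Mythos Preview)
Your proof is correct and follows essentially the same approach as the paper: both use the integral comparison $\sum_{i \ge m} i^{-\alpha} \ge \int_m^\infty x^{-\alpha}\,dx$ for the decreasing function $x^{-\alpha}$, and both use the hypothesis $\gamma C t^{1/(\alpha-1)} > 1$ to replace the lower limit by $2\gamma C t^{1/(\alpha-1)}$. Your version is in fact slightly more explicit in introducing $m = \lceil \gamma C t^{1/(\alpha-1)} \rceil$ and tracking the ceiling, whereas the paper writes the lower bound directly as $\beta \int_{\gamma C t^{1/(\alpha-1)}+1}^\infty x^{-\alpha}\,dx$; the two are equivalent.
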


                \begin{proof}
                    Since $x^{-\alpha}$ is monotone decreasing we have
                    $$
                        \PPo{X\ge \gamma C  t^{1/(\alpha-1)}}\ge \beta \int_{\gamma C  t^{1/(\alpha-1)}+1}^{\infty} x^{-\alpha} dx \ge \beta\frac{(2\gamma C)^{1-\alpha}}{\alpha-1}t^{-1}.
                    $$
                \end{proof}

                \begin{remark}
                    Roughly speaking, Lemma \ref{lem:largevalueprob} tells us that if we consider the addition of $\Omega(t)$ vertices to the graph, the probability that there exists a step where the number of newly added edges is of the same order of magnitude as the total number of edges inserted so far, is bounded away from zero.
                \end{remark}

                Recall
                $\numdeg_i(t)$ is the number of vertices of degree $i$ at time $t$ and let
                \begin{equation}\label{eq:Q}
                    \smalldeg_k(t)=\sum_{i=1}^k \numdeg_i(t)
                \end{equation}
                be the number of vertices of degree less than or equal to $k$.
                Denote by $\mathcal{L}_k(b,t,\eta)$, $b,\eta \in \mathbb{R}_+$, the event that there exists a step $t<\tau \le (1+\eta)t$ such that $\smalldeg_k(\tau)< b\tau-2\eta \tau$.
                If the proportion of vertices with degree at most $k$ were concentrated, then we would expect $Q_k(\tau)$ close to $b\tau$, for some $b$ when $\tau$ is sufficiently large. The event $\mathcal{L}_k(b,t,\eta)$ represents the opposite of this: it indicates that there is some $t<\tau\le (1+\eta)t$ such that $Q_k(\tau)$ is far from $b\tau$ for some step $\tau$. In the following lemma we show that $\mathcal{L}_k(b,t,\eta)$ holds with probability bounded away from zero.

                \begin{lemma}\label{lem:lowerbound}
                    Consider $\alpha\in (1,2)$ and a fixed $k\ge 1$.
                    Assume $0<b\le 1$ and $0<\eta\le b/8$. For any $\gamma> 32$ which satisfies
                    \begin{equation}\label{eq:gamma}
                        \exp\left(-\frac{(1-32/\gamma)^2 \gamma k }{64}\right) \le b  -4 \eta,
                    \end{equation} 
                    and every positive integer $t$ such that $\gamma C t^{1/(\alpha-1)}>1$ and $\eta t> c^{1-\alpha}$
                    we have
                    $$
                        \PPo{\mathcal{L}_k(b,t,\eta)}\ge \frac{4 \eta^3 t}{1+4\eta^2 t}\cdot \frac{9}{16} \beta\frac{(2k\gamma C)^{1-\alpha}}{\alpha-1}.
                    $$
                \end{lemma}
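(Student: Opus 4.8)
\emph{Proof idea.} The event $\mathcal{L}_k(b,t,\eta)$ should occur because, with the claimed probability, there is a single step $\tau'\in(t,(1+\eta)t]$ at which the new vertex arrives with a colossal degree $X_{\tau'}$ --- of order $k\gamma$ times the number of edges present so far --- so that \emph{every} vertex alive at step $\tau'-1$ picks up at least $k$ of the $X_{\tau'}$ fresh edges and hence leaves step $\tau'$ with degree $>k$; then $\smalldeg_k(\tau')$ collapses well below $(b-2\eta)\tau'$. To set this up, put $M_{\min}(\tau'):=k\gamma\,C(\alpha,\tau')(\tau')^{1/(\alpha-1)}$. Applying Lemma~\ref{lem:largevalueprob} at time $\tau'$ with $k\gamma$ in place of $\gamma$, and using the monotonicity $C(\alpha,\tau')\le C(\alpha,t)$ together with $\tau'\le(1+\eta)t$, one gets $\PPo{X_{\tau'}\ge M_{\min}(\tau')}\ge\frac1{(1+\eta)t}\,\beta\frac{(2k\gamma C)^{1-\alpha}}{\alpha-1}$ for every $\tau'$ in the window, and --- crucially --- this event depends only on $X_{\tau'}$, hence is independent of $\Lambda(\tau'-1)=\sum_{i<\tau'}X_i$. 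Independently, Lemma~\ref{lem:upperbound} at time $\tau'-1$ (legitimate since $\tau'-1\ge t>c^{1-\alpha}$, which follows from $\eta t>c^{1-\alpha}$ and $\eta<1$) with $z=16$ gives $\PPo{\Lambda(\tau'-1)<16\,C(\alpha,\tau'-1)(\tau'-1)^{1/(\alpha-1)}}>\tfrac78-\tfrac1{16}$. Call $\tau'$ \emph{favourable} if both events hold; using $\gamma>32$ and the elementary bound $C(\alpha,\tau')/C(\alpha,\tau'-1)\ge 1-(8(\tau'-1))^{-1}$ (obtained as in the proof of Lemma~\ref{lem:upperbound}), on a favourable step the number $M:=X_{\tau'}$ of incoming edges satisfies $M/(2\Lambda(\tau'-1))>k\gamma/32>k$.

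Now I would condition on the multigraph after step $\tau'-1$ and on $X_{\tau'}\ge M_{\min}(\tau')$. Each of the $\tau'$ vertices present receives each of the $M$ new edges with probability $d_v(\tau'-1)/(2\Lambda(\tau'-1))\ge 1/(2\Lambda(\tau'-1))$, so the number it collects stochastically dominates $\Bin{M,1/(2\Lambda(\tau'-1))}$, a binomial of mean exceeding $k\gamma/32$. A Chernoff lower‑tail estimate (with the deviation calibrated so that $k=\frac{32}{\gamma}\cdot\frac{k\gamma}{32}$) shows this number is $<k$ with probability at most $\exp\!\big(-\tfrac{(1-32/\gamma)^2\gamma k}{64}\big)$, which is $\le b-4\eta$ by hypothesis~\eqref{eq:gamma}. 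A vertex gaining $\ge k$ edges has degree $>k$ at step $\tau'$, and the newborn $v_{\tau'}$ has degree $M>k$; hence $\smalldeg_k(\tau')\le\#\{v \text{ present at }\tau'-1:\ v\text{ gains}<k\text{ edges}\}$, so $\Ex{\smalldeg_k(\tau')\mid \tau'\text{ favourable}}\le(b-4\eta)\tau'$. Markov's inequality then yields $\PPo{\smalldeg_k(\tau')<(b-2\eta)\tau'\mid \tau'\text{ favourable}}\ge 1-\tfrac{b-4\eta}{b-2\eta}=\tfrac{2\eta}{b-2\eta}\ge 2\eta$; that is, $\mathcal{L}_k(b,t,\eta)$ is witnessed at $\tau'$ with this conditional probability.

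Finally I would boost the per‑step probability (which is only $\Theta(1/t)$) to the stated constant‑order bound by a second moment argument over the window. Let $N$ be the number of $\tau'\in(t,(1+\eta)t]$ that are favourable and at which $\mathcal{L}_k$ is witnessed. Combining the three displays above, $\Ex{N}\gtrsim \eta t\cdot\tfrac1{(1+\eta)t}\,\beta\tfrac{(2k\gamma C)^{1-\alpha}}{\alpha-1}\cdot\eta$, up to integer‑part corrections. Since the events $\{X_{\tau'}\ge M_{\min}(\tau')\}$ are mutually independent (each depends on a distinct $X_{\tau'}$), the off‑diagonal contribution to $\Ex{N^2}$ is controlled by $(\Ex{N})^2$ plus a lower‑order term, and the Paley–Zygmund inequality $\PPo{\mathcal{L}_k(b,t,\eta)}\ge\PPo{N\ge1}\ge(\Ex{N})^2/\Ex{N^2}$ delivers the bound $\frac{4\eta^3 t}{1+4\eta^2 t}\cdot\frac9{16}\beta\frac{(2k\gamma C)^{1-\alpha}}{\alpha-1}$.

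I expect the main obstacle to be the quantitative bookkeeping in the last two steps: one must check that the Chernoff exponent emerges in \emph{exactly} the shape of~\eqref{eq:gamma} (which is what pins down the constants $16$ and $32$), and one must carry the second moment computation through sharply enough to land the precise prefactor $\tfrac{4\eta^3 t}{1+4\eta^2 t}\cdot\tfrac9{16}$ rather than merely something of the same order. A secondary subtlety is that $\Lambda(\tau'-1)$ must be small relative to $M_{\min}(\tau')$ \emph{uniformly} over the window --- which is why the threshold is taken to scale with $\tau'$ rather than with $t$, and why the comparison of $C(\alpha,\tau'-1)$ with $C(\alpha,\tau')$ is needed (this is the point where a naive $t$‑fixed threshold would degrade as $\alpha\to1$).
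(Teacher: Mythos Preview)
Your outline has the right overall architecture, but there is a genuine gap in the middle step that prevents you from reaching the stated bound. After showing $\Ex{\smalldeg_k(\tau')\mid \tau'\text{ favourable}}\le(b-4\eta)\tau'$, you invoke Markov's inequality and obtain $\PPo{\smalldeg_k(\tau')<(b-2\eta)\tau'\mid\tau'\text{ favourable}}\ge 2\eta$. This is too weak by a factor of order $1/\eta$: the prefactor $\tfrac{4\eta^2 t}{1+4\eta^2 t}$ in the lemma (which tends to $1$ as $t\to\infty$) is exactly the conditional probability the paper extracts here, and it comes from a \emph{variance} estimate, not a first-moment one. The paper conditions on the full state $(m,\ell,\mathrm{d})$, observes that the indicators ``$v_i$ still has degree $\le k$'' are negatively correlated (an edge landing on $v_i$ makes it less likely to land on $v_{i'}$), hence $\Var{\smalldeg_k(\tau')}\le\tau'$, and then applies Cantelli's inequality to get $\PPo{\smalldeg_k(\tau')\ge(b-2\eta)\tau'}\le 1/(1+4\eta^2\tau')$. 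Without this variance bound you land at $\Theta(\eta^2)$ for the final probability rather than the required $\Theta(\eta)$, and no amount of bookkeeping in the last step will recover the missing factor.

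Separately, the Paley--Zygmund step is a detour. The favourable events at distinct $\tau'$ are essentially \emph{mutually exclusive}: if $X_{\tau'}$ is colossal then $\Lambda(\tau''-1)$ is colossal for every $\tau''>\tau'$, so no later step can be favourable. The paper exploits this directly --- it defines $\mathcal{E}_\tau$ so that the $\mathcal{E}_\tau$ are disjoint and simply writes $\PPo{\mathcal{L}_k}\ge\sum_\tau\PPo{\mathcal{L}_k\mid\mathcal{E}_\tau}\PPo{\mathcal{E}_\tau}$. Your second-moment route, even if carried out, collapses to $\PPo{N\ge1}\approx\Ex{N}$ under mutual exclusivity and gains nothing. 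Finally, the paper's constants arise from splitting $\Lambda(\tau-1)$ as $\sum_{j\le t}X_j+\sum_{t<j<\tau}X_j$ and applying Lemma~\ref{lem:upperbound} twice with $z=8$ (giving $(3/4)^2=9/16$); your single application with $z=16$ and a $\tau'$-dependent threshold does not reproduce the $9/16$ either.
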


                Note that \eqref{eq:gamma} is satisfied when
                $(1-32/\gamma)^2\gamma \ge - \frac{64}{k}\log(b-4\eta)$. Hence there exists a $\gamma> 32$ such that \eqref{eq:gamma} holds. 

                \begin{proof}
                    For every integer $\tau$ satisfying $t< \tau \le (1+\eta)t$ denote by $\mathcal{E}_\tau$ the intersection of the following three events:
                    \begin{itemize}
                        \item $\left\{\sum_{j=1}^t X_j < 8C t^{1/(\alpha-1)}\right\}$;
                        \item $\left\{\sum_{j=t+1}^{\tau-1} X_j < 8 C (\eta t)^{1/(\alpha-1)}\right\}$;
                        \item $\left\{X_\tau \ge k\gamma Ct^{1/(\alpha-1)}\right\}$.
                    \end{itemize}
                    Since $k\gamma \ge 32 \ge 8 \eta^{1/(\alpha-1)}$ the events $\{\mathcal{E}_\tau:t<\tau\le (1+\eta)t\}$ are mutually exclusive, thus
                    \begin{equation}\label{eq:prob}
                        \PPo{\mathcal{L}_k(b,t,\eta)}\ge \sum_{\tau=t+1}^{(1+\eta)t}\PPo{\mathcal{L}_k(b,t,\eta) \mid \mathcal{E}_\tau}\PPo{\mathcal{E}_{\tau}}.
                    \end{equation}

                    We start by considering the probability
                    $\PPo{\mathcal{E}_{\tau}}$. Due to the independence of the $X_j$ we have
                    \begin{align*}
                        \PPo{\mathcal{E}_{\tau}}
                        &= \PPo{\sum_{j=1}^t X_j < 8C t^{1/(\alpha-1)}, \sum_{j=t+1}^{\tau-1} X_j < 8 C (\eta t)^{1/(\alpha-1)}, X_\tau\ge  k\gamma C t^{1/(\alpha-1)}}\\
                        &= \PPo{\sum_{j=1}^t X_j < 8C t^{1/(\alpha-1)}} \PPo{\sum_{j=t+1}^{\tau-1} X_j < 8 C (\eta t)^{1/(\alpha-1)}} \PPo{X_\tau\ge k\gamma C t^{1/(\alpha-1)}}\\
                        &\ge\PPo{\sum_{j=1}^t X_j < 8C t^{1/(\alpha-1)}} \PPo{\sum_{j=1}^{\eta t} X_j < 8 C (\eta t)^{1/(\alpha-1)}} \PPo{X_\tau \ge k\gamma C t^{1/(\alpha-1)}}.
                    \end{align*}
                    Lemma~\ref{lem:upperbound} implies that the first two terms of this product are each at least $3/4$. 
                    We deduce from Lemma~\ref{lem:largevalueprob} that the last term can be bounded from below by
                    $$
                        \beta\frac{(2k\gamma C)^{1-\alpha}}{\alpha-1}t^{-1}.
                    $$
                    Therefore
                    \begin{equation}\label{eq:lbjp}
                        \PPo{\mathcal{E}_{\tau}}\ge \frac{9}{16} \beta \frac{(2k\gamma C)^{1-\alpha}}{\alpha-1}t^{-1}.
                    \end{equation}

                    All that is left is to provide a lower bound on $\PPo{\mathcal{L}_k(b,t,\eta) \mid \mathcal{E}_\tau}$. 
                    We will use that
                    \begin{equation}\label{aa}
                        \PPo{\mathcal{L}_k(b,t,\eta) \mid \mathcal{E}_\tau} \ge \PPo{\smalldeg_k(\tau)< b\tau-2\eta \tau \mid \mathcal{E}_{\tau}},
                    \end{equation}
                    for every fixed $t<\tau\le (1+\eta)t$.
                    Denote by $U$ the set of indices of vertices with degree at most $k$ immediately before step $\tau$. For $i\in U$ let $I_{v_i}$ be the indicator random variable that $v_i$ still has degree at most $k$ at the end of step $\tau$. Since $\mathcal{E}_\tau$ holds, we have $X_\tau \ge k\gamma C t^{1/(\alpha-1)}>k$ and thus no vertex with degree at most $k$ is added in step $\tau$. Therefore
                    $$
                        \smalldeg_k(\tau)=\sum_{i\in U}I_{v_i}.
                    $$

                    Consider $m,\ell$ positive integers and $\mathrm{d}$ a vector of length $\tau$ consisting of positive integers. Denote the elements of the vector $\mathrm{d}$ by $\mathrm{d}_0,\ldots,\mathrm{d}_{\tau-1}$. Let $\mathcal{M}_{m,\ell,\mathrm{d}}$ be the event that $\sum_{j=1}^{\tau-1}X_j=m$,  $X_\tau=\ell$ and $d_i(\tau-1)=\mathrm{d}_i$, for every $0\le i \le \tau-1$.

                    Fix an arbitrary $m,\ell$ and $\mathrm{d}$ satisfying $\PPo{\mathcal{M}_{m,\ell,\mathrm{d}}\mid \mathcal{E}_{\tau}}>0$. 
                    Note that $\Ex{\smalldeg_k(\tau)\mid \mathcal{M}_{m,\ell,\mathrm{d}}, \mathcal{E}_{\tau}}=\Ex{\smalldeg_k(\tau)\mid \mathcal{M}_{m,\ell,\mathrm{d}}}$ and $\Var{\smalldeg_k(\tau)\mid \mathcal{M}_{m,\ell,\mathrm{d}}, \mathcal{E}_{\tau}}=\Var{\smalldeg_k(\tau)\mid \mathcal{M}_{m,\ell,\mathrm{d}}}$, and we will establish these values in the following.
                    For any $i\in U$
                    \begin{align*}
                        & \PPo{I_{v_i}=1\mid \mathcal{M}_{m,\ell,\mathrm{d}}} = \PPo{B_{\ell, \frac{\mathrm{d}_i}{2m}} \le k-\mathrm{d}_i}
                        \le \PPo{B_{\ell, \frac{1}{2m}} \le k-\mathrm{d}_i}
                        \le \PPo{B_{\ell, \frac{1}{2m}} \le k}, 
                    \end{align*}
                    where $B_{\ell,p}$ is a binomial random variable with parameters $\ell$ and $p$.
                    Using $\PPo{\mathcal{M}_{m,\ell,\mathrm{d}}\mid \mathcal{E}_{\tau}}>0$ we have $m\le16 C t^{1/(\alpha-1)}$ and $\ell\ge k\gamma C t^{1/(\alpha-1)}$. Therefore
                    \begin{align*}
                        \Ex{I_{v_i}\mid \mathcal{M}_{m,\ell,\mathrm{d}}}
                        &=\PPo{I_{v_i}=1\mid \mathcal{M}_{m,\ell,\mathrm{d}}}
                        \le \PPo{B_{\ell, \frac{1}{2m}} \le k}\\
                        &\le \PPo{B_{k\gamma C t^{1/(\alpha-1)}, \frac{1}{32 C t^{1/(\alpha-1)}}}\le k}\le \exp\left(-\frac{(1-32/\gamma)^2 \gamma k }{64}\right), \label{eq:index}
                    \end{align*}
                    where in the last step we used the Chernoff bound $\PPo{B_{\ell,p}\le (1-\delta)\ell p}\le \exp(-\delta^2 \ell p/2)$.
                    We deduce
                    \begin{equation}\label{eq:expsmalldeg}
                        \Ex{\smalldeg_k(\tau) \mid \mathcal{M}_{m,\ell,\mathrm{d}}}=\Ex{\sum_{u\in U} I_{v_i} \mid \mathcal{M}_{m,\ell,\mathrm{d}}}\le  \tau \exp\left(-\frac{(1-32/\gamma)^2 \gamma k }{64}\right) \stackrel{\eqref{eq:gamma}}{\le} b \tau -4 \eta \tau,
                    \end{equation}
                    as $|U|\le \tau$.
                    In addition, for every $i,i'\in U$ we have
                    \begin{equation}\label{eq:neg_conv}
                        \PPo{I_{v_i}=0 \mid I_{v_{i'}}=0,\mathcal{M}_{m,\ell,\mathrm{d}}}\le \PPo{I_{v_i}=0 \mid \mathcal{M}_{m,\ell,\mathrm{d}}} ,
                    \end{equation}
                    as any edge inserted in step $\tau$ can increase the degree of at most one of $u$ and $u'$. 
                    Then
                    \begin{align*}
                        &\PPo{I_{v_i}=0\mid \mathcal{M}_{m,\ell,\mathrm{d}}}\PPo{I_{v_{i'}}=0\mid \mathcal{M}_{m,\ell,\mathrm{d}}}\\
                        &\quad=1-\PPo{I_{v_i}=1\mid \mathcal{M}_{m,\ell,\mathrm{d}}}-\PPo{I_{v_{i'}}=1\mid \mathcal{M}_{m,\ell,\mathrm{d}}}+\PPo{I_{v_i}=1\mid \mathcal{M}_{m,\ell,\mathrm{d}}}\PPo{I_{v_{i'}}=1\mid \mathcal{M}_{m,\ell,\mathrm{d}}}
                    \end{align*}
                    and
                    \begin{align*}
                        &\PPo{I_{v_i}=0,I_{v_{i'}}=0\mid \mathcal{M}_{m,\ell,\mathrm{d}}}\\
                        &\quad =1-\PPo{I_{v_i}=1\mid \mathcal{M}_{m,\ell,\mathrm{d}}}-\PPo{I_{v_{i'}}=1\mid \mathcal{M}_{m,\ell,\mathrm{d}}}+\PPo{I_{v_i}=1,I_{v_{i'}}=1\mid \mathcal{M}_{m,\ell,\mathrm{d}}}.
                    \end{align*}
                    Therefore \eqref{eq:neg_conv} is equivalent to
                    $$
                        \PPo{I_{v_i}=1 \mid I_{v_{i'}}=1,  \mathcal{M}_{m,\ell,\mathrm{d}}}\le \PPo{I_{v_i}=1 \mid \mathcal{M}_{m,\ell,\mathrm{d}}},
                    $$
                    implying that the indicator random variables are negatively correlated. Since $|U|\le \tau$  we have
                    \begin{equation}\label{eq:varsmalldeg}
                        \Var{Q_k(\tau)\mid \mathcal{M}_{m,\ell,\mathrm{d}}}\le \tau.
                    \end{equation}

                    By Cantelli's inequality using \eqref{eq:expsmalldeg} and \eqref{eq:varsmalldeg} we  have
                    \begin{align*}
                        \PPo{\smalldeg_k(\tau)\ge b\tau-2\eta \tau \mid \mathcal{M}_{m,\ell,\mathrm{d}}}
                        &\le \mathbb{P}\Bigl[ \smalldeg_k(\tau)\ge \Ex{\smalldeg_k(\tau)\mid \mathcal{M}_{m,\ell,\mathrm{d}}}+2\eta \tau \mid \mathcal{M}_{m,\ell,\mathrm{d}}\Bigr]\\
                        &\le \frac{1}{1+\frac{4\eta^2 \tau^2}{\Var{\smalldeg_k(\tau)\mid \mathcal{M}_{m,\ell,\mathrm{d}}}}}\le \frac{1}{1+\frac{4\eta^2 \tau^2}{\tau}}\le \frac{1}{1+4\eta^2 t}.
                    \end{align*}

                    Since the above bound holds for every choice of $m,\ell,\mathrm{d}$ satisfying $\PPo{\mathcal{M}_{m,\ell,\mathrm{d}}\mid \mathcal{E}_{\tau}}>0$ we also have
                    \begin{align*}
                        \PPo{\smalldeg_k(\tau)\ge b\tau-2\eta \tau \mid \mathcal{E}_{\tau}}
                        &=\sum_{\substack{m,\ell \in \mathbb{N},\mathrm{d}\in \mathbb{N}^{\tau}\\\PPo{\mathcal{M}_{m,\ell,\mathrm{d}}\mid \mathcal{E}_{\tau}}>0}}\PPo{\smalldeg_k(\tau)\ge b\tau-2\eta \tau \mid \mathcal{M}_{m,\ell,\mathrm{d}},\mathcal{E}_{\tau}}\PPo{\mathcal{M}_{m,\ell,\mathrm{d}}\mid \mathcal{E}_{\tau}}\\
                        &=\sum_{\substack{m,\ell \in \mathbb{N},\mathrm{d}\in \mathbb{N}^{\tau}\\\PPo{\mathcal{M}_{m,\ell,\mathrm{d}}\mid \mathcal{E}_{\tau}}>0}}\PPo{\smalldeg_k(\tau)\ge b\tau-2\eta \tau \mid \mathcal{M}_{m,\ell,\mathrm{d}}}\PPo{\mathcal{M}_{m,\ell,\mathrm{d}}\mid \mathcal{E}_{\tau}}\\
                        &\le \frac{1}{1+4\eta^2 t}.
                    \end{align*}

                    By looking at the complementary event and by \eqref{aa},
                    \begin{equation}\label{eq:lower2}
                        \frac{4\eta^2 t}{1+4\eta^2 t} \le \PPo{\smalldeg_k(\tau)< b\tau-2\eta \tau \mid \mathcal{E}_{\tau}} \le \PPo{\mathcal{L}_k(b,t,\eta)\mid \mathcal{E}_\tau}.
                    \end{equation}
                    Plugging \eqref{eq:lbjp} and \eqref{eq:lower2} into \eqref{eq:prob} gives
                    \begin{equation*}
                        \frac{4\eta^2 t}{1+4\eta^2 t}\cdot \frac{9}{16} \beta \frac{(2 k\gamma C)^{1-\alpha}}{\alpha-1}\eta \le \PPo{\mathcal{L}_k(b,t,\eta)},
                    \end{equation*}
                    as required.
                \end{proof}

            \subsubsection{Proof of Theorem~\ref{thm:main}}\label{th:ma:pr}

                Fix $\alpha\in(1,2)$, $k\ge 1$, and recall \eqref{eq:def:Cs} and \eqref{eq:Q}. Assume for contradiction that there exists an $0\le a\le 1$ such that $\ratdeg_k(t)\stackrel{p}{\to}a$ as $t\to \infty$. Then, by definition, for every $\zeta>0$ we have $\PPo{|\ratdeg_k(t)-a|\ge \zeta}\to 0$ as $t\to \infty$. 
                We will consider separately the two cases $a>0$ and $a=0$.

                \bigskip

                \noindent \underline{Case $a>0$:}

                \medskip

                Set $\eta=a/8$. 
                Note that if there exists a step $t< \tau \le (1+\eta)t$ such that $\smalldeg_k(\tau)< a\tau-2\eta \tau$, then since in every step at most one vertex of degree at most $k$ is created and because $a\ge \eta$ we have
                $$
                    \numdeg_k((1+\eta)t)\le \smalldeg_k((1+\eta)t)< a\tau-2\eta \tau +(1+\eta)t-\tau < a\tau -\eta \tau \le (a-\eta)(1+\eta)t
                $$
                and thus
                $$
                    \ratdeg_k((1+\eta)t)=\frac{\numdeg_k((1+\eta)t)}{(1+\eta)t+1}<a-\eta.
                $$
                Recall that $\mathcal{L}_k(a,t,\eta)$, $a,\eta \in \mathbb{R}_+$, is the event that there exists a step $t<\tau \le (1+\eta)t$ such that $\smalldeg_k(\tau)< a\tau-2\eta \tau$. Therefore
                \begin{equation}\label{eq:nonconc}
                    \PPo{\mathcal{L}_k(a,t,\eta)}\le \PPo{\ratdeg_k((1+\eta)t)< a-\eta}.
                \end{equation}

                Select $\gamma > 32$ which satisfies \eqref{eq:gamma}.
                Then for $t\ge \max\{(2\eta)^{-2},\eta^{-1}c^{1-\alpha},(\gamma C)^{1-\alpha}\}$
                we have
                \begin{align*}
                    \PPo{|r_k((1+\eta)t)-a|\ge  \eta} & \ge \PPo{\ratdeg_k((1+\eta)t)< a-\eta} \\
                    &\stackrel{\eqref{eq:nonconc}}{\ge} \PPo{\mathcal{L}_k(a,t,\eta)}
                    \stackrel{\text{Lem.}~\ref{lem:lowerbound}}{\ge} \frac{4\eta^3 t}{1+4\eta^2 t} \cdot \frac{9}{16} \beta \frac{(2k\gamma C)^{1-\alpha}}{\alpha-1}\\
                    &\stackrel{t\ge (2\eta)^{-2}}{\ge} \frac{9 \eta}{32} \beta \frac{(2k\gamma C)^{1-\alpha}}{\alpha-1},
                \end{align*}
                a contradiction, as 
                $$
                    \lim_{t\to\infty} \frac{9 \eta}{32} \beta \frac{(2k\gamma C(\alpha,t))^{1-\alpha}}{\alpha-1}>0,
                $$
                where $t \mapsto C(\alpha,t)$ is decreasing as $t \to \infty$ and
                \begin{equation}\label{eq:lim:C}
                    C_\infty= \lim_{t\to\infty} C(\alpha,t)=\frac{ (2c)^{2-\alpha}\beta}{2-\alpha}.
                \end{equation}

                \smallskip

                \noindent \underline{Case $a=0$:}

                \medskip

                Let $\mathcal{J}_t$ be the event that 
                $$
                    \sum_{j=1}^t X_j\ge 16 C t^{1/(\alpha-1)}
                $$
                and $\mathcal{K}_t$ be the event that 
                $$
                    \sum_{j=t+1}^{2t} X_j\le 8 C t^{1/(\alpha-1)}.
                $$

                For $t< j \le 2t$, let $I_j$ be the indicator random variable for the event the initial degree of vertex $v_j$ is $k$ and the degree of vertex $v_j$ remains unchanged until step $2t$. 
                Since $1-x\ge e^{-2x}$ when $x\le 1/2$,
                for $t\ge \left(\frac{k}{8C_\infty}\right)^{\alpha-1}$,
                \begin{align}\label{eq:indprob}
                    \PPo{I_j=1\mid \mathcal{J}_t,\mathcal{K}_t}&\ge \PPo{X_j=k \mid \mathcal{J}_t,\mathcal{K}_t} \left(1-\frac{k}{16 C t^{1/(\alpha-1)}}\right)^{8C t^{1/(\alpha-1)}} \\ & \ge \PPo{X_j=k \mid \mathcal{K}_t} e^{-k}, \notag
                \end{align}
                where the last inequality also uses the fact that $X_j$ and $\mathcal{K}_t$ are independent of $\mathcal{J}_t$.

                Since $1/(\alpha-1)>1$, Bernoulli's inequality implies 
                \begin{align}\label{bb}
                    8Ct^{1/(\alpha-1)}-k&=8C(t-1)^{1/(\alpha-1)}\left(1+\frac{1}{t-1}\right)^{1/(\alpha-1)}-k\\
                    &\ge 8C (t-1)^{1/(\alpha-1)}+\frac{8C}{\alpha-1}(t-1)^{1/(\alpha-1)-1}-k\notag \\
                    &\ge 8C (t-1)^{1/(\alpha-1)},\notag
                \end{align}
                where the last inequality holds if $t \ge 1+\left(\frac{k(\alpha-1)}{8C_\infty}\right)^{\frac{\alpha-1}{2-\alpha}}$.
                Therefore, since the $X_i$ are independent
                \begin{align}\label{cc}
                    \PPo{X_j=k \mid \mathcal{K}_t}&\ge \PPo{X_j=k}\PPo{\sum_{\substack{\ell=t+1\\\ell\neq j}}^{2t}X_\ell\le 8Ct^{1/(\alpha-1)}-k}\\
                    &\stackrel{\eqref{bb}}{\ge} \PPo{X_j=k}\PPo{\sum_{\substack{\ell=t+1\\\ell\neq j}}^{2t}X_\ell\le 8C(t-1)^{1/(\alpha-1)}}\notag\\
                    &\stackrel{\text{Lem.}~\ref{lem:upperbound}}{\ge} \frac{3}{4}\beta k^{-\alpha},\notag
                \end{align}
                where the last inequality holds if $t-1 \ge c^{1-\alpha}$. Hence \eqref{cc} holds for
                \begin{equation*}
                    t \ge \max \left(1+\left(\frac{k(\alpha-1)}{8C_\infty}\right)^{\frac{\alpha-1}{2-\alpha}}, 1+c^{1-\alpha} \right).
                \end{equation*}
                To combine \eqref{cc} with \eqref{eq:indprob} we assume that
                \begin{equation}\label{eq:tdef2}
                    t\ge \max\left\{\left(\frac{k}{8C_\infty}\right)^{\alpha-1},1+\left(\frac{k(\alpha-1)}{8C_\infty}\right)^{\frac{\alpha-1}{2-\alpha}}, 1+c^{1-\alpha}\right\},
                \end{equation}
                in order to obtain
                $$
                    \PPo{I_j=0\mid \mathcal{J}_t,\mathcal{K}_t}< 1-\frac{3\beta}{4 e^k}k^{-\alpha}.
                $$
                Let us introduce the random variable $A=t-\sum_{j=t+1}^{2t}I_j$ counting the number of vertices $v_j$, $t<j\le 2t$, whose degree is not $k$ at some point during the first $2t$ steps. We have
                $$
                    \Ex{A \mid \mathcal{J}_t,\mathcal{K}_t}< \left(1-\frac{3\beta}{4 e^k}k^{-\alpha}\right)t
                $$
                and by Markov's inequality
                $$
                    \PPo{A \ge \left(1-\frac{\beta}{4 e^k}k^{-\alpha}\right)t \mid \mathcal{J}_t,\mathcal{K}_t}\le \frac{1-3\beta k^{-\alpha}/(4 e^k)}{1-\beta k^{-\alpha}/(4 e^k)}=1-\frac{2\beta k^{-\alpha}}{4 e^k-\beta k^{-\alpha}}.
                $$
                Since  $R_k(2t) \ge \sum_{j={t+1}}^{2t}I_j = t-A$, 
                \begin{equation}\label{eq:degkcond}
                    \frac{2 \beta k^{-\alpha}}{4 e^k-\beta k^{-\alpha}}<\PPo{\numdeg_k(2t)>\frac{\beta}{4 e^k}k^{-\alpha}t \mid \mathcal{J}_t,\mathcal{K}_t}<\PPo{\numdeg_k(2t)>\frac{\beta}{12 e^k}k^{-\alpha}(2t+1) \mid \mathcal{J}_t,\mathcal{K}_t} .
                \end{equation}
                For $t$ satisfying \eqref{eq:tdef2}, Lemma~\ref{lem:largevalueprob} implies that
                $$
                    \PPo{\mathcal{J}_t}\ge 1-\left(1-\beta\frac{(32C)^{1-\alpha}}{\alpha-1}t^{-1}\right)^t \ge 1-\exp\left(-\beta\frac{(32C)^{1-\alpha}}{\alpha-1}\right),
                $$
                and by Lemma~\ref{lem:upperbound} $\PPo{\mathcal{K}_t}\ge 3/4$. 
                Together with \eqref{eq:degkcond} we have
                \begin{align*}
                    \PPo{\ratdeg_k(2t)\ge \frac{\beta}{12 e^k}k^{-\alpha}}
                    &= \PPo{\numdeg_k(2t)\ge\frac{\beta}{12 e^k}k^{-\alpha}(2t+1) }\\
                    &\ge \frac{3}{4} \cdot \frac{2\beta k^{-\alpha}}{4 e^k-\beta k^{-\alpha}}\left(1-\exp\left(-\beta\frac{(16C)^{1-\alpha}}{\alpha-1}\right)\right),
                \end{align*}
                a contradiction as 
                $$
                    \lim_{t\to \infty} \frac{3}{4} \cdot \frac{2\beta k^{-\alpha}}{4 e^k-\beta k^{-\alpha}}\left(1-\exp\left(-\beta\frac{(16C(\alpha,t))^{1-\alpha}}{\alpha-1}\right)\right)> 0,
                $$
                as $\eqref{eq:lim:C}$ is strictly positive.

            \subsection{Proof of Theorem~\ref{thm:main2}}

                Recall Theorem~\ref{thm:main2} regards the case $\alpha=2$; the same will hold for all the following lemmas.

                \subsubsection{Lemmas}\label{lemm2}

                    \begin{figure}
                        \centering
                        \includegraphics[scale=.5]{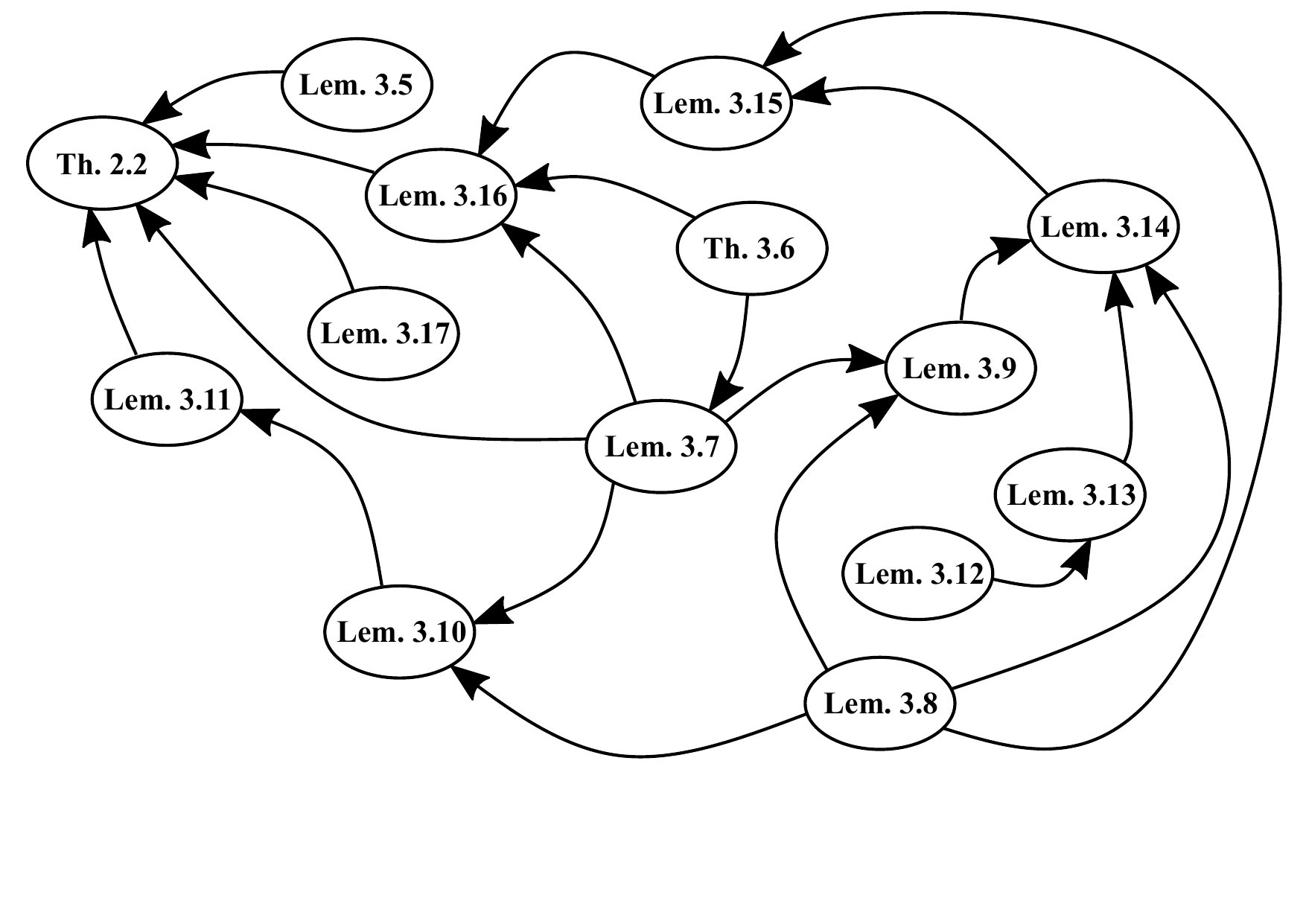}
                        \caption{\label{twofig}Relationship between lemmas in the proof of Theorem~\ref{thm:main2}. For example the lemmas \ref{trecinque},
                        \ref{lem:edgeconc},
                        \ref{lem:expR}, \ref{lem:marconc1}, \ref{lem:marconc2} are those directly recalled by the proof of Theorem~\ref{thm:main2}.}
                    \end{figure}

                    Let $t \ge 3$. 
                    We will truncate the random variable $X$ at $ t\log\log{t}+1$, so that whp none of the random variables $X_1,\ldots,X_t$ takes a value above the truncation point, and at the same time keeps the maximal number of edges added low and thus also the second moment of the truncated random variable.
                    Denote by $Z$ the random variable $X$ truncated at $ t\log\log{t}+1$, i.e.\ for every $i< t\log\log{t}+1$ we have 
                    $$
                        \PPo{Z=i}=\frac{\beta}{\PPo{X< t \log\log{t} + 1}} i^{-2}=\beta'' i^{-2},
                    $$
                    where $\beta''=\beta\PPo{X< t \log\log{t} + 1}^{-1}$.
                    In comparison with the previous case, when the truncation point was $c t^{1/(\alpha-1)}+1$, the current truncation point grows slower for any value of $\alpha\in(1,2)$, however it grows slightly quicker when $\alpha=2$.

                    We will often use the first and second moments of $Z$, 
                    \begin{align}
                        \Ex{Z}&= \sum_{i:1\le i < t\log\log{t}+1} \frac{\beta''}{i}=(1+o(1))\beta'' \log{t} \label{eq:firstmomZ}\\
                        \Ex{Z^2}&= \sum_{i:1\le i < t\log\log{t}+1} \beta''= (1+o(1))\beta''t\log\log{t}. \label{eq:secmomZ}
                    \end{align}

                    Consider a sequence $Z_1,Z_2,\ldots$, formed of independent copies of $Z$.  
                    Denote by $\mathcal{H}$ the event that for every $1\le \tau \le t$ we have $X_\tau< t \log\log{t}+1$. 

                    \begin{lemma}\label{trecinque}
                        For any event $\mathcal{D}$ we have 
                        $$
                            \PPo{\mathcal{D}}\ge \PPo{\mathcal{D}\mid \mathcal{H}}-o(1).
                        $$
                    \end{lemma}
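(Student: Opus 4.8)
The plan is to deduce the statement from the elementary inequality relating $\PPo{\mathcal{D}}$, $\PPo{\mathcal{D}\cap\mathcal{H}}$ and $\PPo{\mathcal{D}\mid\mathcal{H}}$ together with the fact that $\mathcal{H}$ holds with high probability. Concretely, since $\PPo{\mathcal{D}}\ge \PPo{\mathcal{D}\cap \mathcal{H}}=\PPo{\mathcal{D}\mid\mathcal{H}}\,\PPo{\mathcal{H}}$ and $\PPo{\mathcal{D}\mid\mathcal{H}}\le 1$, it is enough to show $\PPo{\mathcal{H}}=1-o(1)$: indeed then
\[
\PPo{\mathcal{D}}\ge \PPo{\mathcal{D}\mid\mathcal{H}}\,\PPo{\mathcal{H}}\ge \PPo{\mathcal{D}\mid\mathcal{H}}(1-o(1))\ge \PPo{\mathcal{D}\mid\mathcal{H}}-o(1),
\]
which is precisely the claim.

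To obtain $\PPo{\mathcal{H}}=1-o(1)$ I would first bound the probability that a single $X_\tau$ exceeds the truncation point. Since $\PPo{X=i}=\beta i^{-2}$ and $x^{-2}$ is monotone decreasing,
\[
\PPo{X\ge t\log\log t+1}\le \beta\int_{t\log\log t}^{\infty}x^{-2}\,dx=\frac{\beta}{t\log\log t}.
\]
A union bound over $1\le \tau\le t$ then gives $\PPo{\mathcal{H}^c}\le t\cdot\frac{\beta}{t\log\log t}=\frac{\beta}{\log\log t}=o(1)$, so $\PPo{\mathcal{H}}=1-o(1)$ as needed for the display above.

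There is essentially no hard step: the lemma is a soft consequence of the general principle that conditioning on a whp event perturbs probabilities by at most $o(1)$, and the only thing to check is that the chosen truncation point $t\log\log t+1$ is large enough for the union bound over the $t$ variables to still leave an $o(1)$ error — which it comfortably is, since $\log\log t\to\infty$. (Any truncation level of the form $t\,\omega(t)$ with $\omega(t)\to\infty$ would work for this lemma; the specific value $t\log\log t+1$ is forced by the later requirement that $\Ex{Z^2}$ stay small, cf.\ \eqref{eq:secmomZ}.)
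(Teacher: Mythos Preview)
Your proof is correct and follows essentially the same approach as the paper: bound the tail $\PPo{X\ge t\log\log t+1}$ by the integral $\beta/(t\log\log t)$, apply a union bound over $\tau\le t$ to get $\PPo{\mathcal{H}}\ge 1-\beta/\log\log t$, and conclude via $\PPo{\mathcal{D}}\ge \PPo{\mathcal{D}\mid\mathcal{H}}\PPo{\mathcal{H}}$.
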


                    \begin{proof}
                        Note that 
                        $$
                            \PPo{X\ge t\log\log{t} +1} \le \int_{t\log\log{t}} ^{\infty} \frac{\beta}{x^2} dx = \frac{\beta}{t\log\log{t}}.
                        $$
                        Then by the union bound on the complement of $\mathcal{H}$ we have 
                        \begin{equation*}
                            \PPo{\mathcal{H}}\ge 1-\frac{\beta}{\log\log{t}}=1-o(1).
                        \end{equation*}
                        Therefore 
                        $$
                            \PPo{\mathcal{D}}\ge \PPo{\mathcal{D}\mid \mathcal{H}}\PPo{\mathcal{H}}=\PPo{\mathcal{D}\mid \mathcal{H}}-o(1).
                        $$
                    \end{proof}

                    In order to prove  Theorem~\ref{thm:main2}, letting $\mathcal{D}$ be the event that $r_k(t)=b_k+o(1)$, the previous lemma justifies the use of the truncated variables $Z$'s in the PARID-model. From this point all events and probabilities refer to this model, unless explicitly stated otherwise.

                    Putting together theorems 6.1 and 6.7 in Chung and Lu \cite{MR2283885} we can express the Azuma-Hoeffding inequality in the form below. We will make use of this theorem in the proofs of lemmas \ref{lem:edgeconc} and \ref{lem:marconc1}.

                    \begin{theorem}\label{thm:marconc}
                        Let $(W_\tau)_{\tau = 0}^\infty$, $W_0=0$, be a martingale adapted to the filtration $\mathcal{F} = (\mathcal{F}_\tau)_{\tau = 0}^{\infty}$ satisfying
                        \begin{itemize}
                            \item $\Var{W_\tau\mid \mathcal{F}_{\tau-1}}\le \sigma ^2$ for $1\le \tau \le t$,
                            \item $|W_\tau-W_{\tau-1}|\le b$ for $1\le \tau \le t$,
                        \end{itemize}
                        for some fixed constants $b,\sigma^2$, and for each $1\le t \le \infty$. Then
                        $$
                            \PPo{|W_t|\ge \lambda}\le 2\exp\left(-\frac{\lambda^2}{2 (t \sigma^2+b\lambda/3)}\right).
                        $$
                    \end{theorem}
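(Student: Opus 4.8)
The plan is to reproduce the classical exponential supermartingale (Bernstein--Freedman) argument. Write $D_\tau := W_\tau - W_{\tau-1}$ for the martingale increments, so that $\Ex{D_\tau \mid \mathcal{F}_{\tau-1}} = 0$, $|D_\tau| \le b$ almost surely, and, since $(W_\tau)$ is a martingale, $\Ex{D_\tau^2 \mid \mathcal{F}_{\tau-1}} = \Var{W_\tau \mid \mathcal{F}_{\tau-1}} \le \sigma^2$. Fix a parameter $s > 0$; the goal is to bound $\Ex{\Expon{sW_t}}$ and then apply a Chernoff-type estimate.

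The first ingredient is a one-step moment generating function bound: for any random variable $D$ with $\Ex{D} = 0$ and $D \le b$ almost surely,
\[
    \Ex{\Expon{sD}} \le \Expon{\frac{\Ex{D^2}}{b^2}\left(e^{sb} - 1 - sb\right)}.
\]
This follows from the fact that $x \mapsto (e^x - 1 - x)/x^2$ is nondecreasing on $\mathbb{R}$ (its value at $0$ being $1/2$): hence pointwise $e^{sD} \le 1 + sD + (sD)^2 (e^{sb} - 1 - sb)/(sb)^2$, and taking expectations together with $1 + y \le e^y$ gives the displayed inequality. Applying it conditionally on $\mathcal{F}_{\tau-1}$ yields the deterministic bound $\Ex{\Expon{sD_\tau} \mid \mathcal{F}_{\tau-1}} \le \Expon{\sigma^2 (e^{sb} - 1 - sb)/b^2}$, and peeling the factors off one at a time via the tower property gives $\Ex{\Expon{sW_t}} \le \Expon{t \sigma^2 (e^{sb} - 1 - sb)/b^2}$.

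Markov's inequality now gives, for every $s > 0$, $\PPo{W_t \ge \lambda} \le \Expon{-s\lambda + t\sigma^2 (e^{sb} - 1 - sb)/b^2}$. Optimizing over $s$ — the minimizer is $s = b^{-1}\log(1 + u)$ with $u := b\lambda/(t\sigma^2)$ — transforms this into the Bennett-type bound $\Expon{-(t\sigma^2/b^2)\, h(u)}$ with $h(u) := (1 + u)\log(1 + u) - u$. The final step is the elementary inequality $h(u) \ge (u^2/2)/(1 + u/3)$, valid for all $u \ge 0$; substituting $u = b\lambda/(t\sigma^2)$ and simplifying gives $\PPo{W_t \ge \lambda} \le \Expon{-\lambda^2 / (2(t\sigma^2 + b\lambda/3))}$. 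Since $-W_\tau$ is again a martingale with the same increment bound and the same conditional variance bound, the identical estimate holds for $\PPo{-W_t \ge \lambda}$, and a union bound over the two tails yields the stated factor $2$.

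The only points demanding genuine care are the one-step estimate, i.e.\ the monotonicity of $(e^x - 1 - x)/x^2$, and the closing inequality $h(u) \ge (u^2/2)/(1 + u/3)$; everything else is a routine Chernoff computation and an iteration. As the statement indicates, these two analytic facts are exactly Theorems~6.1 and 6.7 of Chung and Lu~\cite{MR2283885}, so in the write-up one may simply quote and combine them.
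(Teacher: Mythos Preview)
Your argument is correct: this is the standard Bernstein--Freedman exponential-supermartingale proof, and every step (the monotonicity of $(e^x-1-x)/x^2$, the tower-property iteration, the optimization in $s$, and the inequality $h(u)\ge (u^2/2)/(1+u/3)$) is sound.

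The paper, however, does not prove this statement at all. It is quoted verbatim as a known result, with the sentence ``Putting together theorems~6.1 and~6.7 in Chung and Lu~\cite{MR2283885} we can express the Azuma--Hoeffding inequality in the form below,'' and no further argument is given. So there is nothing to compare at the level of proof strategy: you have supplied a self-contained derivation where the paper simply cites the literature. Your closing remark that the two analytic facts one needs are exactly those theorems in Chung--Lu is accurate and aligns with the paper's citation.
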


                    Similarly as before, we consider the number of edges in the graph. Let $\mathcal{C}$ be the event that for $1\le \tau \le t$ the number of edges $L(\tau) = \sum_{i=1}^\tau Z_i$ differs from its expectation by at most $t \log^{2/3}t$.

                    \begin{lemma}\label{lem:edgeconc}
                        We have $\PPo{\mathcal{C}}=1-o(\log^{-2}t)$.
                    \end{lemma}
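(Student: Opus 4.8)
The plan is to apply the Azuma--Hoeffding inequality of Theorem~\ref{thm:marconc} to the martingale obtained by Doob-decomposing the edge count $L(\tau)=\sum_{i=1}^\tau Z_i$ around its mean. Concretely, set $W_\tau = \sum_{i=1}^\tau (Z_i - \Ex{Z})$, which is a martingale with $W_0=0$ adapted to the natural filtration generated by $Z_1,\ldots,Z_\tau$; its increments are $W_\tau-W_{\tau-1}=Z_\tau-\Ex{Z}$. The event $\mathcal{C}$ is exactly $\{|W_\tau|\le t\log^{2/3}t \text{ for all }1\le\tau\le t\}$, so a union bound over the $t$ values of $\tau$ reduces the task to bounding $\PPo{|W_\tau|\ge t\log^{2/3}t}$ for each fixed $\tau\le t$ and checking that $t$ times this bound is $o(\log^{-2}t)$.

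Next I would verify the two hypotheses of Theorem~\ref{thm:marconc}. For the bounded-difference condition: since $Z$ is supported on $\{1,\ldots,\lceil t\log\log t\rceil\}$ and $\Ex{Z}=(1+o(1))\beta''\log t$ by \eqref{eq:firstmomZ}, we have $|Z_\tau-\Ex{Z}|\le t\log\log t + 1 =: b$. For the conditional-variance condition: because the $Z_i$ are independent, $\Var{W_\tau\mid\mathcal{F}_{\tau-1}}=\Var{Z_\tau}\le \Ex{Z^2}=(1+o(1))\beta''t\log\log t =: \sigma^2$ by \eqref{eq:secmomZ}. Plugging $\lambda = t\log^{2/3}t$, $b=(1+o(1))t\log\log t$ and $\sigma^2=(1+o(1))\beta'' t\log\log t$ into the tail bound gives, for the dominant term in the denominator,
\begin{align*}
    \PPo{|W_\tau|\ge t\log^{2/3}t}\le 2\exp\left(-\frac{t^2\log^{4/3}t}{2\left(t\cdot (1+o(1))\beta'' t\log\log t + (1+o(1))t\log\log t\cdot t\log^{2/3}t/3\right)}\right).
\end{align*}
Here the first term in the denominator is $\Theta(t^2\log\log t)$ and the second is $\Theta(t^2\log^{2/3}t\log\log t)$, so the exponent is $-\Theta\!\left(\log^{4/3}t/(\log^{2/3}t\log\log t)\right)=-\Theta\!\left(\log^{2/3}t/\log\log t\right)$, which tends to $-\infty$ faster than any power of $\log t$. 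Hence $\PPo{|W_\tau|\ge t\log^{2/3}t}=o(\log^{-3}t)$, and multiplying by $t\le t$ terms in the union bound — wait, the union bound introduces a factor $t$, which I must absorb: writing $2\exp(-\Theta(\log^{2/3}t/\log\log t))$, this is $t^{-\omega(1)}$-type smallness only in the exponent of $e$, so $t$ times it is still $o(\log^{-2}t)$ since $\exp(-\Theta(\log^{2/3}t/\log\log t))$ decays faster than $\exp(-C\log t/\log\log t) = t^{-C/\log\log t}$; combined with the extra $\log$ factors this beats $t\log^2 t$. Taking complements yields $\PPo{\mathcal{C}}=1-o(\log^{-2}t)$.

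The main obstacle is bookkeeping rather than conceptual: one must be careful that the bound $|W_\tau-W_{\tau-1}|\le b$ uses the truncated range correctly (the value $t\log\log t+1$ is the truncation point, so $Z\le t\log\log t$, and subtracting the positive mean $\Ex{Z}$ only helps), and that the conditional variance is controlled by $\Ex{Z^2}$ rather than the full (infinite) second moment of $X$ — which is precisely why we truncated and why Lemma~\ref{trecinque} was needed to pass to the $Z$-model. The only genuine care-point is confirming that the exponent, of order $\log^{2/3}t/\log\log t$, grows fast enough that even after multiplying by the union-bound factor $t$ and the target slack $\log^2 t$ the whole expression is $o(1)$; this holds because $\log^{2/3}t/\log\log t = \omega(\log\log t)$, so $\exp$ of its negative is $o(\log^{-c}t)$ for every constant $c$, in particular for $c=3$, and $t \cdot o(\log^{-3} t)$ — here I instead bound directly: the per-$\tau$ probability is at most $2\exp(-\Theta(\log^{2/3}t/\log\log t))$, and since there are at most $t$ terms, we need $t\cdot 2\exp(-\Theta(\log^{2/3}t/\log\log t)) = o(\log^{-2}t)$, equivalently $\log t + \log\log^{2}t + O(1) = o(\log^{2/3}t/\log\log t)$, which is false. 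Let me correct the slack: the denominator's second term dominates, $b\lambda/3 = \Theta(t^2\log^{2/3}t\log\log t)$, giving exponent $-\Theta(\log^{2/3}t/\log\log t)$; this is not $\omega(\log t)$, so a naive union bound over $t$ steps fails. The fix is to apply Theorem~\ref{thm:marconc} \emph{once} to $W_t$ (its conclusion already bounds $\PPo{|W_t|\ge\lambda}$, and since $(W_\tau)$ with the stated $\sigma^2,b$ satisfies the hypotheses for every $1\le t\le\infty$, one actually gets the maximal inequality form controlling $\max_{\tau\le t}|W_\tau|$ directly from the Chung--Lu statement) — so no union bound factor $t$ appears, and $\PPo{\mathcal{C}} \ge 1 - 2\exp(-\Theta(\log^{2/3}t/\log\log t)) = 1-o(\log^{-2}t)$ follows immediately, which is the claim.
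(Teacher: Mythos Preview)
Your setup and arithmetic match the paper's: you center $L(\tau)$ to get the martingale $W_\tau=\sum_{i\le\tau}(Z_i-\Ex{Z})$, feed $b=(1+o(1))t\log\log t$ and $\sigma^2=(1+o(1))\beta'' t\log\log t$ into Theorem~\ref{thm:marconc}, and correctly note that a naive union bound over $\tau\le t$ loses a factor $t$ that the resulting $\exp(-\Theta(\log^{2/3}t/\log\log t))$ cannot absorb.

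The gap is your ``fix''. Theorem~\ref{thm:marconc}, as stated in the paper, bounds only the terminal value $\PPo{|W_t|\ge\lambda}$; it is not a maximal inequality, and the phrase ``for every $1\le t\le\infty$'' in its hypotheses refers to the range of times at which the increment conditions must hold, not to a uniform control of $\max_{\tau\le t}|W_\tau|$. Asserting that the Chung--Lu statement ``actually gives the maximal inequality form'' is exactly the step that needs an argument. The paper supplies it via a stopping-time trick: set $H=\min\{s\ge 0:|W_s|>t\log^{2/3}t\}$ and apply Theorem~\ref{thm:marconc} to the stopped martingale $M(\tau)=W_{\tau\wedge H}$. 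The stopped process is still a martingale with $M_0=0$; its increments vanish after time $H$, so the same bounds $|M_\tau-M_{\tau-1}|\le b$ and $\Var{M_\tau\mid\mathcal F_{\tau-1}}\le\sigma^2$ hold. Crucially, if $\mathcal C$ fails---i.e.\ $|W_\tau|>t\log^{2/3}t$ for some $\tau\le t$---then $H\le t$ and the process freezes with $|M(t)|=|W_H|>t\log^{2/3}t$. Hence $\PPo{\overline{\mathcal C}}\le\PPo{|M(t)|>t\log^{2/3}t}$, and a \emph{single} application of Theorem~\ref{thm:marconc} at time $t$ gives the bound $\exp(-\Theta(\log^{2/3}t/\log\log t))\le\exp(-\log^{1/2}t)=o(\log^{-2}t)$, with no union-bound factor. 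Insert this stopping argument in place of your last sentence and the proof is complete.
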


                    \begin{proof}
                        Let $\bar{Z}(\tau)=\sum_{j=1}^{\tau}\left(Z_j-\Ex{Z}\right)$ for $\tau\ge 1$ and $\bar{Z}(0)=0$. Clearly, $(\bar{Z}(\tau))_{\tau=0}^\infty$ is a martingale with respect to its natural filtration.
                        Now, let $H = \min \{ s \ge 0 \colon |\bar{Z}(s)| > t\log^{2/3}t \}$, and define
                        the stopped martingale $(M(\tau))_{\tau=0}^\infty$ such that $M(\tau) = \bar{Z}(\tau \wedge H)$.
                        Clearly, $M(\tau)$ can change by at most $t\log\log{t}+1=(1+o(1))t\log\log{t}$.
                        By \eqref{eq:secmomZ} we have
                        \begin{equation*}
                            \Var{Z}\le\Ex{Z^2}= 
                            (1+o(1))\beta''t\log\log{t}.
                        \end{equation*}
                        Hence, $\Var{M(\tau)\mid Z_1,\ldots, Z_{\tau-1}}\le\Var{Z}\le(1+o(1))\beta'' t\log\log{t}$. Therefore, by Theorem~\ref{thm:marconc} we have
                        \begin{equation*}
                            \begin{split}
                                \PPo{|M(t)|>t \log^{2/3}t}&\le 2\exp\left(-\frac{t^2\log^{4/3}t}{3 \beta'' t^2 \log\log{t} + t^2 \log^{2/3}t \log\log{t}}\right)\\
                                &\le \exp(-\log^{1/2}t) = o(\log^{-2}t).
                            \end{split}
                        \end{equation*}
                        The last step is justified by recalling that $e^{-x}\le r!/x^r$, for every $x\ge 0$, $r \in \mathbb{N}$. In particular, choose $r=4$.

                        Note that if $|M(\tau)|>t \log^{2/3}t$ for some $\tau\le t$ then, due to the stopping condition, we also have $|M(t)|>t \log^{2/3}t$. Therefore with probability $o(\log^{-2}t)$ for every $1\le \tau \le t$ we have that $L(\tau)$ differs from its expectation by at most $t \log^{2/3}t$.
                    \end{proof}

                    The following asymptotic estimations are necessary for the subsequent results. Let $t_0=t/\log\log{t}$ and for any $1\le s \le t$ let $\mathcal{C}_s$ be the event that for all $1\le \tau \le s$ the number of edges $L(\tau) = \sum_{i=1}^\tau Z_i$ differs from its expectation by at most $t \log^{2/3}t$. Note that $\mathcal{C}_t=\mathcal{C}$.
                    \begin{lemma}\label{treotto}
                        Let $t_0\le s \le t$.
                        Conditional on $\mathcal{C}_{s}$, for every $t_0\le \tau \le s$, we have 
                        \begin{equation}
                            \Ex{L(\tau)}=(1+o(\log^{-1/4}t))\tau \beta'' \log t.
                        \end{equation}
                        and
                        \begin{equation}\label{eq:edgconc}
                            L(\tau)=(1+o(\log^{-1/4}t))\Ex{L(\tau)}=(1+o(\log^{-1/4}t))\tau \beta''\log{t},
                        \end{equation}
                        almost surely.
                    \end{lemma}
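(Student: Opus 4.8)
The plan is to reduce the statement to the first moment of $Z$ together with the tail estimate for $\mathcal{C}$ from Lemma~\ref{lem:edgeconc}; the restriction $\tau\ge t_0=t/\log\log t$ will be used precisely to make $\Ex{L(\tau)}$ of order at least $t\log t/\log\log t$, which swamps the $t\log^{2/3}t$ fluctuation permitted on $\mathcal{C}_s$. I would first sharpen \eqref{eq:firstmomZ}: since $\sum_{i=1}^{M}i^{-1}=\log M+\gamma+o(1)$ and $M=\lceil t\log\log t\rceil$ gives $\log M=\log t+O(\log\log\log t)$, one gets $\Ex{Z}=\beta''(\log t+O(\log\log\log t))=(1+o(\log^{-1/4}t))\beta''\log t$, using $\log\log\log t/\log t=o(\log^{-1/4}t)$. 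Multiplying by $\tau$ yields the \emph{unconditional} identity $\Ex{L(\tau)}=\tau\Ex{Z}=(1+o(\log^{-1/4}t))\tau\beta''\log t$ for every $\tau\le t$.

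To get the same asymptotics for $\Ex{L(\tau)\mid\mathcal{C}_s}$, note $\mathcal{C}=\mathcal{C}_t\subseteq\mathcal{C}_s$ for $s\le t$, so $\PPo{\mathcal{C}_s^c}\le\PPo{\mathcal{C}^c}=o(\log^{-2}t)$ and $\PPo{\mathcal{C}_s}=1-o(\log^{-2}t)$. Writing $\Ex{L(\tau)\mid\mathcal{C}_s}=(\Ex{L(\tau)}-\Ex{L(\tau)\mathbf{1}_{\mathcal{C}_s^c}})/\PPo{\mathcal{C}_s}$, I would bound the stray term by Cauchy--Schwarz, $\Ex{L(\tau)\mathbf{1}_{\mathcal{C}_s^c}}\le(\Ex{L(\tau)^2}\,\PPo{\mathcal{C}_s^c})^{1/2}$, with $\Ex{L(\tau)^2}=\tau\Var{Z}+\tau^2\Ex{Z}^2\le\tau\Ex{Z^2}+\tau^2\Ex{Z}^2=O(t^2\log^2t)$ by \eqref{eq:firstmomZ}, \eqref{eq:secmomZ} and $\tau\le t$. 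Thus $\Ex{L(\tau)\mathbf{1}_{\mathcal{C}_s^c}}=O(t\log t)\cdot o(\log^{-1}t)=o(t)$; since $\tau\beta''\log t\ge t_0\beta''\log t=\Thta{t\log t/\log\log t}$, this error is $o(\log^{-1/4}t)$ relative to $\tau\beta''\log t$, and together with $\PPo{\mathcal{C}_s}=1-o(\log^{-2}t)$ it gives $\Ex{L(\tau)\mid\mathcal{C}_s}=(1+o(\log^{-1/4}t))\tau\beta''\log t$.

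For the almost-sure claim I would unpack the definition of $\mathcal{C}_s$: on this event $|L(\tau)-\Ex{L(\tau)}|\le t\log^{2/3}t$ for all $\tau\le s$, in particular for $t_0\le\tau\le s$. Dividing by $\Ex{L(\tau)}=\tau\Ex{Z}\ge t_0\Ex{Z}=(1+o(1))\beta''\,t\log t/\log\log t$ gives $|L(\tau)-\Ex{L(\tau)}|/\Ex{L(\tau)}\le(1+o(1))\log\log t/(\beta''\log^{1/3}t)=o(\log^{-1/4}t)$, so $L(\tau)=(1+o(\log^{-1/4}t))\Ex{L(\tau)}$ on $\mathcal{C}_s$; chaining with the previous two paragraphs (and absorbing the $o(\log^{-1/4}t)$ discrepancy between $\Ex{L(\tau)}$ and $\Ex{L(\tau)\mid\mathcal{C}_s}$) yields both displayed equalities of the lemma.

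The argument is essentially bookkeeping, with no real obstacle beyond keeping the error terms honest. The one thing to watch is that the three competing small quantities --- $\log\log\log t/\log t$ from the harmonic sum, $(\Ex{L(\tau)^2}\PPo{\mathcal{C}_s^c})^{1/2}/\Ex{L(\tau)}$ from the conditioning, and $\log^{2/3}t\,\log\log t/\log t$ from the fluctuation tolerated on $\mathcal{C}_s$ --- are each $o(\log^{-1/4}t)$; it is precisely the cutoff $\tau\ge t_0=t/\log\log t$ that makes the last of these small.
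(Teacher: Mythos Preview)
Your proof is correct and follows essentially the same line as the paper's: estimate $\Ex{Z}$ to precision $(1+o(\log^{-1/4}t))\beta''\log t$, multiply by $\tau$ to get $\Ex{L(\tau)}$, and then on $\mathcal{C}_s$ divide the tolerated fluctuation $t\log^{2/3}t$ by $\Ex{L(\tau)}\ge\Ex{L(t_0)}=\Thta{t\log t/\log\log t}$ to see that the relative error is $O(\log\log t/\log^{1/3}t)=o(\log^{-1/4}t)$. The one substantive addition you make is the Cauchy--Schwarz step controlling $\Ex{L(\tau)\mid\mathcal{C}_s}$; the paper does not do this because in the statement $\Ex{L(\tau)}$ denotes the \emph{unconditional} expectation (the phrase ``conditional on $\mathcal{C}_s$'' is there only to make the almost-sure claim in \eqref{eq:edgconc} meaningful), so your extra paragraph is harmless but unnecessary.
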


                    \begin{proof}
                        First note that
                        \begin{equation}\label{eq:exp}
                            \Ex{Z} = \sum_{i:1\le i < t\log\log t+1} \beta'' i^{-1}= \beta''\int_{1}^{t \log\log{t}} x^{-1} dx +O(1)=(1+o(\log^{-1/4}t)) \beta'' \log{t},
                        \end{equation}
                        and
                        \begin{equation}
                            \Ex{L(\tau)}=(1+o(\log^{-1/4}t))\tau \beta'' \log t.
                        \end{equation}
                        Furthermore, by $\mathcal{C}_{s}$,
                        $L(\tau)<\Ex{L(\tau)}+t \log^{2/3} t$ for every $\tau\le s$.
                        Considering $\Ex{L(\tau)}\ge \Ex{L(t_0)} = (1+o(\log^{-1/4}t))t_0 \beta'' \log(t)$, we have
                        $$
                            \Ex{L(\tau)}+t \log^{2/3} t= (1+O(t (\log^{-1/3}t)/t_0))\Ex{L(\tau)}.
                        $$
                        Hence,
                        $$
                            L(\tau)< (1+O(t (\log^{-1/3}t)/t_0))\Ex{L(\tau)} = (1+o(\log^{-1/4}t))\Ex{L(\tau)}.
                        $$
                        The lower bound follows analogously. 
                    \end{proof}

                    \begin{lemma}\label{lem:Zmoments}
                        For any $t_0\le s \le t$ and $\ell=1,2$,
                        we have
                        $$
                            \Ex{\biggl(\sum_{j=1}^s Z_j\biggr)^{-\ell}}\le \frac{1+o(1)}{(\beta'' s\log {t})^\ell}.
                        $$
                    \end{lemma}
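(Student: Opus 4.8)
The plan is to split $\Ex{L(s)^{-\ell}}$, where $L(s)=\sum_{j=1}^{s}Z_j$, according to whether the edge-count concentration event holds: on that event one has a sharp two-sided control of $L(s)$, while on its (extremely unlikely) complement one falls back on the trivial deterministic bound $L(s)\ge s\ge t_0$, which is available precisely because every $Z_j\ge 1$.

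First I would recall that the event $\mathcal{C}_s$ contains $\mathcal{C}=\mathcal{C}_t$, so $\PPo{\mathcal{C}_s^c}\le\PPo{\mathcal{C}^c}$, and that Lemma~\ref{treotto} applied with $\tau=s$ gives, conditional on $\mathcal{C}_s$ and almost surely, $L(s)=(1+o(\log^{-1/4}t))\beta'' s\log t$; hence $L(s)^{-\ell}\le (1+o(1))(\beta'' s\log t)^{-\ell}$ on $\mathcal{C}_s$. Writing
\begin{align*}
    \Ex{L(s)^{-\ell}}
    &=\Ex{L(s)^{-\ell}\Ind{\mathcal{C}_s}}+\Ex{L(s)^{-\ell}\Ind{\mathcal{C}_s^c}}\\
    &\le (1+o(1))(\beta'' s\log t)^{-\ell}+\PPo{\mathcal{C}_s^c}\,t_0^{-\ell},
\end{align*}
where in the last term I used that $L(s)\ge \sum_{j=1}^{s}1=s\ge t_0$ deterministically, so that $L(s)^{-\ell}\le t_0^{-\ell}=(\log\log t/t)^{\ell}$.

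It then remains to check that $\PPo{\mathcal{C}_s^c}\,t_0^{-\ell}=o\bigl((\beta'' s\log t)^{-\ell}\bigr)$. The statement of Lemma~\ref{lem:edgeconc} gives only $\PPo{\mathcal{C}^c}=o(\log^{-2}t)$, which for $\ell=2$ is not quite enough; however, its proof in fact establishes the stronger bound $\PPo{\mathcal{C}^c}\le \exp(-\log^{1/2}t)$. Using this together with $s\le t$,
$$
    \frac{\PPo{\mathcal{C}_s^c}\,t_0^{-\ell}}{(\beta'' s\log t)^{-\ell}}
    \le \exp(-\log^{1/2}t)\bigl(\beta''\log t\,\log\log t\bigr)^{\ell}\longrightarrow 0,
$$
since a factor $\exp(-\log^{1/2}t)$ dominates any fixed power of $\log t$ and $\log\log t$. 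Plugging this back into the previous display yields $\Ex{L(s)^{-\ell}}\le \tfrac{1+o(1)}{(\beta'' s\log t)^{\ell}}$, and since the $o(1)$ terms appearing above (from Lemma~\ref{treotto} and from the exponential bound) are uniform over $s\in[t_0,t]$, the claim follows for $\ell=1,2$.

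The main obstacle is exactly this last estimate: the off-the-shelf concentration bound $\PPo{\mathcal{C}^c}=o(\log^{-2}t)$ is, by a whisker, insufficient to absorb the $(\log t\,\log\log t)^{2}$ blow-up incurred by the crude lower bound $L(s)\ge t_0$ on the complement, so one has to look inside the proof of Lemma~\ref{lem:edgeconc} to extract the genuinely super-polynomially small tail $\exp(-\log^{1/2}t)$; everything else is a routine case split and a ratio estimate.
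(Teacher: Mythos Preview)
Your proof is correct and follows the same case split as the paper: bound $L(s)^{-\ell}$ by Lemma~\ref{treotto} on the concentration event $\mathcal{C}_s$, and use the deterministic lower bound $Z_j\ge 1$ on the complement together with the smallness of $\PPo{\overline{\mathcal{C}}_s}$.

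The only difference is that the ``main obstacle'' you describe is self-inflicted. On the complement you bound $L(s)\ge t_0$, whereas the paper uses the sharper (and equally trivial) bound $L(s)\ge s$. With that, the complementary term is $s^{-\ell}\cdot o(\log^{-2}t)$, and the ratio to the target $(\beta'' s\log t)^{-\ell}$ is simply $(\beta'')^{\ell}\cdot o(\log^{-2}t)\cdot\log^{\ell}t=o(1)$ for $\ell\le 2$: the powers of $s$ cancel exactly, so the stated bound $\PPo{\overline{\mathcal{C}}}=o(\log^{-2}t)$ from Lemma~\ref{lem:edgeconc} already suffices and there is no need to reach into its proof for the stronger $\exp(-\log^{1/2}t)$ tail. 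Your detour through the exponential bound works, but the extra $(\log\log t)^{\ell}$ loss that forces it comes only from replacing $s$ by $t_0$.
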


                    \begin{proof}
                        Since $\mathcal{C}\subseteq \mathcal{C}_s$ when $s\le t$ and $\ell\le 2$, by Lemma~\ref{lem:edgeconc} and \eqref{eq:edgconc} in Lemma~\ref{treotto} we have
                        \begin{align*}
                            \Ex{\biggl(\sum_{j=1}^s Z_j\biggr)^{-\ell}}
                            &\le \Ex{\biggl(\sum_{j=1}^s Z_j\biggr)^{-\ell} \mid \mathcal{C}_s}+\Ex{\biggl(\sum_{j=1}^s Z_j\biggr)^{-\ell} \mid \overline{\mathcal{C}}_s}o(\log^{-\ell}t)\\
                            &\le \frac{1+o(1)}{(\beta'' s\log{t})^{\ell}}+s^{-\ell}o(\log^{-\ell}t)=\frac{1+o(1)}{(\beta''s\log{t})^{\ell}},
                        \end{align*}
                        where in the penultimate step we use that $Z_j\ge 1$ for every $j\in \mathbb{N}$.
                    \end{proof}

                    By means of the following two lemmas we will establish the value of $\Ex{R_k(t)}$. While we could use the aforementioned results of Bhamidi \cite{bhamidi2007universal} to deduce that $b_k=\Ex{\ratdeg_k(t)}+o(1)$ for completeness we include an alternate proof here.
                    More precisely we show that $\Ex{\numdeg_k(t)}$ is close to $(t+1) b_k$. As a first step we provide an iterative method for calculating $\Ex{\numdeg_k(t)}$.

                    \begin{lemma}\label{lem:expchange}
                        For any $k\le \log\log t$ and $t_0\le \tau \le t$ we have
                        $$
                            \Ex{\numdeg_{k}(\tau+1)}=\Ex{\numdeg_{k}(\tau)}+\frac{k-1}{2}\frac{\Ex{\numdeg_{k-1}(\tau)}}{\tau}-\frac{k}{2}\frac{\Ex{\numdeg_{k}(\tau)}}{\tau}+\PPo{Z=k}+o((\log\log t)^{-1}).
                        $$
                    \end{lemma}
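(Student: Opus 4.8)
The plan is to write down the exact one–step recursion for $\Ex{\numdeg_k(\cdot)}$ and then show that the binomial corrections it produces are negligible for $t_0\le\tau\le t$, $k\le\log\log t$. Let $\mathcal{F}_\tau$ be the $\sigma$-field generated by $Z_1,\dots,Z_\tau$ and the attachment choices made up to step $\tau$, and put $L(\tau)=\sum_{i=1}^\tau Z_i$. In step $\tau+1$ the $Z_{\tau+1}$ new edges attach, independently of each other and of $\mathcal{F}_\tau$, to $v_i$ with probability $d_i(\tau)/(2L(\tau))$ for $i\le\tau$, and $v_{\tau+1}$ ends the step with degree exactly $Z_{\tau+1}$ (it receives none of the new edges). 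A vertex has degree $k$ after step $\tau+1$ in exactly one of the ways: it is $v_{\tau+1}$ and $Z_{\tau+1}=k$; it had degree $k$ and got no new edge; it had degree $k-1$ and got exactly one; or it had degree at most $k-2$ and got at least two. Since the marginal number of edges collected by $v_i$ is $\Bin{Z_{\tau+1},d_i(\tau)/(2L(\tau))}$, linearity of expectation gives, with $p=k/(2L(\tau))$ and $q=(k-1)/(2L(\tau))$,
$$\Ex{\numdeg_k(\tau+1)\mid\mathcal{F}_\tau,Z_{\tau+1}}=\Ind{Z_{\tau+1}=k}+\numdeg_k(\tau)(1-p)^{Z_{\tau+1}}+\numdeg_{k-1}(\tau)Z_{\tau+1}q(1-q)^{Z_{\tau+1}-1}+E_\tau,$$
where $0\le E_\tau\le\min\{Z_{\tau+1}/2,\ \binom{Z_{\tau+1}}{2}p^2\tau\}$ is the conditional expected number of vertices climbing by at least two levels.

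Next I would linearise the binomial factors using $1-pz\le(1-p)^z\le e^{-pz}\le 1-pz+(pz)^2/2$ and $zq(1-(z-1)q)\le zq(1-q)^{z-1}\le zq$, obtaining the same display with $\numdeg_k(\tau)(1-p)^{Z_{\tau+1}}$ replaced by $\numdeg_k(\tau)-pZ_{\tau+1}\numdeg_k(\tau)$, with the $q$-term replaced by $qZ_{\tau+1}\numdeg_{k-1}(\tau)$, and with a total error $|E'_\tau|\le E_\tau+\tfrac12\numdeg_k(\tau)(pZ_{\tau+1})^2+\numdeg_{k-1}(\tau)(qZ_{\tau+1})^2$. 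Taking expectations and using that $Z_{\tau+1}$ is independent of $\mathcal{F}_\tau$ (so $\Ex{pZ_{\tau+1}\numdeg_k(\tau)}=\tfrac{k}{2}\Ex{Z}\,\Ex{\numdeg_k(\tau)/L(\tau)}$, and likewise for $q$, and $\Ex{\Ind{Z_{\tau+1}=k}}=\PPo{Z=k}$) reduces the claim to two estimates: (a) $\Ex{Z}\,\Ex{\numdeg_j(\tau)/L(\tau)}=\Ex{\numdeg_j(\tau)}/\tau+o(\log^{-1/4}t)$ for $j\in\{k-1,k\}$ — note the prefactors $k/2,(k-1)/2\le\log\log t$, so a $o(\log^{-1/4}t)$ error becomes $o((\log\log t)^{-1})$ after multiplication — and (b) $\Ex{E'_\tau}=o((\log\log t)^{-1})$.

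For both estimates I would split the relevant expectation according to whether $\mathcal{C}_\tau$ holds. On $\mathcal{C}_\tau$, Lemma~\ref{treotto} (with $s=\tau$) gives $L(\tau)=(1+o(\log^{-1/4}t))\tau\beta''\log t$ and \eqref{eq:exp} gives $\Ex{Z}=(1+o(\log^{-1/4}t))\beta''\log t$, hence $\Ex{Z}/L(\tau)=(1+o(\log^{-1/4}t))/\tau$; combined with $\numdeg_j(\tau)\le\tau+1$ this yields (a) on $\mathcal{C}_\tau$, and bounding $L(\tau)$ below by a constant times $\tau\log t$ and using $\Ex{Z_{\tau+1}^2}=O(t\log\log t)$ from \eqref{eq:secmomZ}, together with $\tau\ge t_0=t/\log\log t$ and $k\le\log\log t$, gives $\Ex{E'_\tau\Ind{\mathcal{C}_\tau}}=O(k^2(\log\log t)^2/\log^2 t)=o((\log\log t)^{-1})$. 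Off $\mathcal{C}_\tau$ I would use only the crude bounds $\numdeg_j(\tau)\le\tau+1$, $L(\tau)\ge\tau\ge t_0$, $\Ex{Z_{\tau+1}}=O(\log t)$, $\Ex{Z_{\tau+1}^2}=O(t\log\log t)$ and, crucially, $E_\tau\le Z_{\tau+1}/2$, pull out the independent factor $Z_{\tau+1}$ or $Z_{\tau+1}^2$, and then multiply by $\PPo{\overline{\mathcal{C}_\tau}}\le\PPo{\overline{\mathcal{C}}}=o(\log^{-2}t)$ from Lemma~\ref{lem:edgeconc}; every resulting bound is a fixed negative power of $\log t$ times a bounded power of $\log\log t$, hence $o((\log\log t)^{-1})$. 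Collecting the four main terms then gives the stated identity.

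The hard part will be the analysis on the bad event $\overline{\mathcal{C}_\tau}$: there $Z_{\tau+1}$ and hence $L(\tau)$ are essentially uncontrolled and $Z_{\tau+1}$ can be as large as $t\log\log t$, while the error one may tolerate is an \emph{absolute} $o((\log\log t)^{-1})$, not a per-vertex quantity. Bounding the number of vertices jumping up by at least two simply by $\tau$ would cost $\tau\cdot\PPo{\overline{\mathcal{C}}}=t\cdot o(\log^{-2}t)$, which diverges; the resolution is to always bound those jumps by $Z_{\tau+1}/2$ and, throughout, to extract the independent factor $Z_{\tau+1}$ or $Z_{\tau+1}^2$ from every expectation before estimating it via $\Ex{Z}=O(\log t)$ and $\Ex{Z^2}=O(t\log\log t)$ against the small probability of $\overline{\mathcal{C}}$.
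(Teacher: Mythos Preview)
Your proposal is correct and follows essentially the same route as the paper's proof. Both arguments decompose the one-step change of $\Ex{\numdeg_k(\cdot)}$ into the contributions ``stay at $k$'', ``rise from $k-1$ to $k$'', ``rise by at least two'', and ``new vertex has degree $k$''; both then split every expectation according to $\mathcal{C}_\tau$ versus $\overline{\mathcal{C}_\tau}$, use Lemma~\ref{treotto} on $\mathcal{C}_\tau$ to replace $L(\tau)$ by $(1+o(\log^{-1/4}t))\tau\Ex{Z}$, and use the crude bounds $\numdeg_j(\tau)\le\tau+1$, $L(\tau)\ge\tau$ together with $\PPo{\overline{\mathcal{C}_\tau}}=o(\log^{-2}t)$ (Lemma~\ref{lem:edgeconc}) on $\overline{\mathcal{C}_\tau}$. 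The only cosmetic difference is that you linearise the binomial factors \emph{before} taking expectations, whereas the paper keeps the exact binomial expression, substitutes $L(\tau)\approx\Ex{L(\tau)}$ inside the expectation, and then expands; the resulting error terms (of order $k^2\Ex{Z^2}/\Ex{L(\tau)}^2$ on $\mathcal{C}_\tau$) coincide. Your identification of the delicate point---that on $\overline{\mathcal{C}_\tau}$ the jump-by-two term must be bounded by $Z_{\tau+1}/2$ rather than by $\tau$, with the independent factor $Z_{\tau+1}$ then pulled out---is exactly how the paper handles it.
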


                    \begin{proof}
                        Denote by $\numdeg_j^{+i}(\tau+1)$ the number of vertices which had degree $j$ in step $\tau$ and have degree $j+i$ in step $\tau+1$, $i =1,\dots, \lceil \log\log {t}\rceil$, and let $H(\tau+1)$ be the number of vertices, with degree at most $k$ whose degree changes by at least 2 in step $\tau+1$.
                        Then
                        \begin{align}\label{eq:exbreakup}
                            & \Ex{\numdeg_{k}(\tau+1)}
                            =\Ex{\numdeg_{k}(\tau)}
                            - \sum_{i=1}^{\lceil \log\log {t}\rceil } \Ex{\numdeg_{k}^{+i}(\tau+1)} + \sum_{i=1}^{k-1}\Ex{\numdeg_{k-i}^{+i}(\tau+1)} +\PPo{Z=k}\notag \\
                            & = \Ex{\numdeg_{k}(\tau)}+\Ex{\numdeg_{k-1}^{+1}(\tau+1)}-\Ex{\numdeg_k^{+1}(\tau+1)}+\PPo{Z=k}+O(\Ex{H(\tau+1)}).    
                        \end{align}

                        Recall that $\mathcal{C}_\tau$ is the event that for $1\le s \le \tau$ the number of edges $L(s) = \sum_{i=1}^s Z_i$ differs from its expectation by at most $t \log^{2/3}t$. Then for any $j\le k$
                        \begin{align}
                            \Ex{\numdeg_j^{+1}(\tau+1)}
                            &=\Ex{\Ex{\numdeg_j^{+1}(\tau+1)\mid \numdeg_{j}(\tau),L(\tau),Z_{\tau+1}}}\nonumber\\
                            &=\Ex{\numdeg_{j}(\tau)Z_{\tau+1} \frac{j}{2L(\tau)}\left(1-\frac{j}{2L(\tau)}\right)^{Z_{\tau+1}-1}}\nonumber\\
                            &=\Ex{1_{\mathcal{C}_{\tau}} \numdeg_{j}(\tau) \frac{j Z_{\tau+1}}{2L(\tau)}\left(1-\frac{j}{2L(\tau)}\right)^{Z_{\tau+1}-1}}\nonumber\\
                            &\quad +\Ex{1_{\overline{\mathcal{C}}_{\tau}}\numdeg_{j}(\tau) \frac{j Z_{\tau+1}}{2L(\tau)}\left(1-\frac{j}{2L(\tau)}\right)^{Z_{\tau+1}-1}}.\label{eq:expindsplit}
                        \end{align}
                        Using  \eqref{eq:edgconc} of Lemma \ref{treotto}, and $j Z_{\tau+1}\le t (\log\log{t})^2+\log\log{t}=o(\Ex{L(\tau)})$ we have 
                        \begin{align}
                            &\Ex{1_{\mathcal{C}_{\tau}} \numdeg_{j}(\tau) \frac{jZ_{\tau+1}}{2L(\tau)}\left(1-\frac{j}{2L(\tau)}\right)^{Z_{\tau+1}-1}}\nonumber\\
                            &=\Ex{1_{\mathcal{C}_{\tau}} \numdeg_{j}(\tau) \frac{jZ_{\tau+1}}{(2+o(\log^{-1/4}t))\Ex{L(\tau)}}\left(1-O\left(\frac{\log\log{t}\, Z_{\tau+1}}{\Ex{L(\tau)}}\right)\right)}\nonumber\\
                            &=\frac{j}{2}\frac{\Ex{1_{\mathcal{C}_{\tau}} \numdeg_{j}(\tau)}}{\Ex{L(\tau)}} \Ex{Z}+o\left(\tau \frac{ \log\log t\, \Ex{Z}}{\log^{1/4}t \Ex{L(\tau)}}\right)
                            -O\left(\tau\frac{(\log\log{t})^2 \Ex{Z^2}}{\Ex{L(\tau)}^2}\right),\label{eq:expindsplit1}
                        \end{align}
                        where in the last step we used that $R_{j}(\tau)\le \tau$ almost surely, that $Z_{\tau+1}$ is independent of both $\mathcal{C}_{\tau}$ and $R_j(\tau)$, and that $1/(1+o(1)) = 1+o(1)$.
                        Recall that $\Ex{L(\tau)}=\tau \Ex{Z}$ implying
                        $$
                            o\left(\tau \frac{ \log\log t\, \Ex{Z}}{\log^{1/4}t \Ex{L(\tau)}}\right)=o((\log\log t)^{-1}).
                        $$ 
                        Also recall that $\tau\ge t_0= t/\log\log t$, which together with \eqref{eq:firstmomZ}, \eqref{eq:secmomZ} and $\Ex{L(\tau)}=\tau \Ex{Z}$ implies
                        $$
                            O\left(\tau\frac{(\log\log{t})^2 \Ex{Z^2}}{\Ex{L(\tau)}^2}\right)=o((\log\log t)^{-1}).
                        $$
                        In addition by Lemma~\ref{lem:edgeconc} and $\numdeg_j(\tau)\le \tau$ we have
                        $$
                            \Ex{R_j(\tau)}=\Ex{R_j(\tau)1_{\mathcal{C}_{\tau}}}+\Ex{R_j(\tau)1_{\overline{\mathcal{C}}_{\tau}}}=\Ex{R_j(\tau)1_{\mathcal{C}_{\tau}}}+o\left(\frac{\tau}{\log^2t}\right)
                        $$
                        and thus
                        \begin{align}
                            \frac{j}{2}\frac{\Ex{1_{\mathcal{C}_{\tau}} \numdeg_{j}(\tau)}}{\Ex{L(\tau)}} \Ex{Z}&+o\left(\tau \frac{ \log\log t\, \Ex{Z}}{\log^{1/4}t \Ex{L(\tau)}}\right)
                            -O\left(\tau\frac{(\log\log{t})^2 \Ex{Z^2}}{\Ex{L(\tau)}^2}\right)\nonumber \\
                            &=\frac{j}{2}\frac{\Ex{\numdeg_{j}(\tau)}}{\tau}+o((\log\log t)^{-1}).
                            \label{eq:expindsplit2}
                        \end{align}
                        As for the other term
                        \begin{align}\label{eq:expindsplit3}
                            \Ex{1_{\overline{\mathcal{C}}_{\tau}}\numdeg_{j}(\tau) \frac{j Z_{\tau+1}}{2L(\tau)}\left(1-\frac{j}{2L(\tau)}\right)^{Z_{\tau+1}}}
                            &=O\left(\Ex{1_{\overline{\mathcal{C}}_{\tau}}\tau \frac{Z_{\tau+1}}{\tau}}\log\log t\right)\notag \\
                            &=o\left(\frac{\Ex{Z}}{\log^2 t}\log\log t\right)=o((\log\log t)^{-1}).
                        \end{align}
                        Using \eqref{eq:expindsplit1},\eqref{eq:expindsplit2} and \eqref{eq:expindsplit3} to evaluate \eqref{eq:expindsplit}, we have
                        \begin{equation}\label{eq:onestepexp}
                            \Ex{\numdeg_j^{+1}(\tau+1)}=\frac{j}{2}\frac{\Ex{\numdeg_{j}(\tau)}}{\tau}+o((\log\log t)^{-1}).
                        \end{equation}
                        Finally we consider $\Ex{H(\tau+1)}$. Similarly to the computation of $\Ex{\numdeg_j^{+1}(\tau+1)}$,
                        \begin{align}
                            \Ex{H(\tau+1)}&=\Ex{1_{\mathcal{C}_\tau} H(\tau+1)}+\Ex{1_{\overline{\mathcal{C}}_\tau} H(\tau+1)}\nonumber\\
                            &\le \frac{(1+o(1))\tau}{4\Ex{L(\tau)}^2}\Ex{k^2 Z_{\tau+1}^2}+\frac{\tau}{\tau^2}\Ex{k^2 Z_{\tau+1}^2}o(\log^{-2}t)=o((\log\log t)^{-1}). \label{eq:morestepexp}
                        \end{align}
                        The result follows by applying \eqref{eq:onestepexp} for $j=k-1,k$ and \eqref{eq:morestepexp} in \eqref{eq:exbreakup}.
                    \end{proof}

                    Using the previously established iteration, in the following lemma we establish the expected number of vertices with degree $k$.

                    \begin{lemma}\label{lem:expR}
                        For every $k\ge 1$ we have	
                        $$
                            \Ex{\numdeg_k(t)}=(t+1)b_k+o(t).
                        $$
                    \end{lemma}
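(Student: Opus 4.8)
The plan is to solve the recursion from Lemma~\ref{lem:expchange} explicitly and match it against the claimed formula for $b_k$. First I would fix $k\ge 1$ and work with the quantities $e_j(\tau):=\Ex{\numdeg_j(\tau)}/\tau$ for $j\le k$. The iteration in Lemma~\ref{lem:expchange} rewrites as
\begin{equation*}
    (\tau+1)e_k(\tau+1)=\tau e_k(\tau)+\tfrac{k-1}{2}e_{k-1}(\tau)-\tfrac{k}{2}e_k(\tau)+\PPo{Z=k}+o((\log\log t)^{-1}),
\end{equation*}
valid for $t_0\le\tau<t$ and $k\le\log\log t$. Since $\PPo{Z=k}=\beta'' k^{-2}\to\beta(2)k^{-2}$ and $t/t_0=\log\log t\to\infty$, I expect that summing this telescoping recursion over $\tau$ from $t_0$ to $t-1$ and dividing by $t$ gives, in the limit, a \emph{stationary} linear system: the fixed point $\bar b_k$ of the map $\bar b_k\mapsto \bar b_k+\tfrac{k-1}{2}\bar b_{k-1}-\tfrac{k}{2}\bar b_k+\beta(2)k^{-2}$, i.e.
\begin{equation*}
    \tfrac{k}{2}\bar b_k=\tfrac{k-1}{2}\bar b_{k-1}+\beta(2)k^{-2},\qquad \text{equivalently}\qquad \bar b_k=\frac{k-1}{k}\bar b_{k-1}+\frac{2\beta(2)}{k^3}.
\end{equation*}

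Next I would prove this rigorously by a double induction / contraction argument on $k$. The base case $k=1$ gives $\tfrac12\bar b_1=\beta(2)$, so $\bar b_1=2\beta(2)$, and one checks this agrees with the stated $b_1=\tfrac{2\beta(2)}{1\cdot2\cdot3}(1+1)=\tfrac{2\beta(2)}{3}$ — wait, this does not match, so the correct reading is that $e_k(\tau)$ does not converge to a constant but the Cesàro-type average produced by the sum does; more precisely I would show by induction on $k$ that for every $\eps>0$ there is $T$ such that $|e_k(\tau)-b_k|<\eps$ for all $T\le t_0\le\tau\le t$ with $t$ large. Assuming this for $k-1$, the recursion for $e_k$ is an affine contraction with multiplier $1-\tfrac{k}{2\tau}$ per step and forcing term $\tfrac{k-1}{2\tau}e_{k-1}(\tau)+\tfrac{\PPo{Z=k}}{\tau}$, so over the $\Theta(t)$ steps from $t_0$ to $t$ the solution is driven towards the instantaneous fixed point $\tfrac{1}{k}\big((k-1)e_{k-1}(\tau)+2\PPo{Z=k}\big)$, which by the inductive hypothesis and $\PPo{Z=k}\to\beta(2)k^{-2}$ converges to $\tfrac{k-1}{k}b_{k-1}+\tfrac{2\beta(2)}{k^3}$. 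Hence $b_k$ satisfies the recursion $b_k=\tfrac{k-1}{k}b_{k-1}+\tfrac{2\beta(2)}{k^3}$, and it remains to verify the closed form
\begin{equation*}
    b_k=\frac{2\beta(2)}{k(k+1)(k+2)}\sum_{i=1}^k\Bigl(1+\frac1i\Bigr)
\end{equation*}
solves it, which is a routine induction: plug the formula for $b_{k-1}$ into $\tfrac{k-1}{k}b_{k-1}+\tfrac{2\beta(2)}{k^3}$ and simplify, using $\sum_{i=1}^k(1+1/i)=\sum_{i=1}^{k-1}(1+1/i)+1+1/k$.

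Finally, from $e_k(t)=\Ex{\numdeg_k(t)}/t\to b_k$ and $k$ fixed I would conclude $\Ex{\numdeg_k(t)}=t\,b_k+o(t)=(t+1)b_k+o(t)$, as claimed. The main obstacle I anticipate is controlling the accumulation of the $o((\log\log t)^{-1})$ error terms over the roughly $t(1-1/\log\log t)$ steps of the iteration: naively multiplying the per-step error by the number of steps gives $o(t/\log\log t)=o(t)$, which is fine for the leading order, but I must be careful that the constant hidden in the $o(\cdot)$ of Lemma~\ref{lem:expchange} is uniform in $\tau\in[t_0,t]$ and in $k\le\log\log t$ (it is, by inspection of that lemma's proof), and that restarting the iteration at $\tau=t_0$ rather than at a constant time does not damage the argument — here one uses $R_k(t_0)\le t_0=t/\log\log t=o(t)$ so the unknown initial value is negligible after dividing by $t$. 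A secondary technical point is that the contraction estimate must be applied with $k$ possibly growing like $\log\log t$; since the multiplier $1-\tfrac{k}{2\tau}\le 1-\tfrac{k}{2t}$ and there are at least $t-t_0$ steps, the "memory" of the initial condition decays by a factor $\exp(-\Omega(k(1-1/\log\log t)))=o(1)$, which suffices.
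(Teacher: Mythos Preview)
Your proposal contains a genuine algebraic error that you actually detected but misdiagnosed. From $(\tau+1)e_k(\tau+1)=\tau e_k(\tau)+\tfrac{k-1}{2}e_{k-1}(\tau)-\tfrac{k}{2}e_k(\tau)+\PPo{Z=k}+o(\cdot)$, the stationary equation is obtained by setting $e_k(\tau+1)=e_k(\tau)=\bar b_k$, which yields $(\tau+1)\bar b_k=\tau\bar b_k+\dots$, i.e.\ $\bar b_k=\tfrac{k-1}{2}\bar b_{k-1}-\tfrac{k}{2}\bar b_k+\beta(2)k^{-2}$ and hence $\tfrac{k+2}{2}\bar b_k=\tfrac{k-1}{2}\bar b_{k-1}+\beta(2)k^{-2}$, \emph{not} $\tfrac{k}{2}\bar b_k=\dots$. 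You dropped the extra $\bar b_k$ coming from $(\tau+1)-\tau$. The correct recursion is $b_k=\tfrac{k-1}{k+2}b_{k-1}+\tfrac{2\beta(2)}{k^2(k+2)}$, which for $k=1$ gives $b_1=\tfrac{2\beta(2)}{3}$ and does match the closed form; your recursion $b_k=\tfrac{k-1}{k}b_{k-1}+\tfrac{2\beta(2)}{k^3}$ is wrong, and the closed form does not satisfy it (as your own check for $k=1$ showed). The ``Ces\`aro average'' and the subsequent contraction heuristic with multiplier $1-\tfrac{k}{2\tau}$ and fixed point $\tfrac{1}{k}\bigl((k-1)e_{k-1}+2\PPo{Z=k}\bigr)$ carry the same mistake: the actual multiplier is $\tfrac{\tau-k/2}{\tau+1}\approx 1-\tfrac{k+2}{2\tau}$ and the instantaneous fixed point has $k+2$ in the denominator.

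Once this is corrected, a contraction argument along your lines can be pushed through, but the paper's route is simpler and avoids the delicate induction on $k$: one writes down the target sequence $b_k'$ satisfying the exact recursion $b_k'=\tfrac{k-1}{2}b_{k-1}'-\tfrac{k}{2}b_k'+\PPo{Z=k}$, sets $e_\tau=\max_{k\le\log\log t}\bigl|\Ex{\numdeg_k(\tau)}-(\tau+1)b_k'\bigr|$, and observes from Lemma~\ref{lem:expchange} that $e_{\tau+1}\le e_\tau+o((\log\log t)^{-1})$ --- no contraction is needed because the target already satisfies the recursion, so only the error term accumulates. Starting from the trivial bound $e_{t_0}\le t_0+1$ gives $e_t=O(t/\log\log t)$. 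You should also note that Lemma~\ref{lem:expchange} only covers $k\le\log\log t$, so a separate argument is required for large $k$; the paper handles this by summing the established estimates over $k\le(\log\log t)^{1/2}$, using $\sum_k b_k'=1$, and deducing $\Ex{\numdeg_k(t)}=o(t)$ for the remaining $k$ from $\sum_j\Ex{\numdeg_j(t)}=t+1$.
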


                    \begin{proof}
                        For $k\ge 1$ define
                        \begin{align}\label{newlabel}
                            b_k':=b_k'(t)= \frac{2 \beta''}{k(k+1)(k+2) }\left(\sum_{i=1}^{\min\{k, \lceil t \log\log{t} \rceil\}}\left(1+\frac{1}{i}\right)\right),
                        \end{align}
                        and set $b_0'=0$.
                        Note that this satisfies the equation
                        \begin{equation}\label{eq:bkconn}
                            b_k'=\frac{k-1}{2}b_{k-1}'-\frac{k}{2}b_k'+\PPo{Z=k}
                        \end{equation}
                        for all $k\ge 1$. This can be easily checked by recalling that $\PPo{Z=k}=0$ for all $k \ge t \log \log t + 1$ and substituting \eqref{newlabel} into \eqref{eq:bkconn}. Thus
                        \begin{equation}\label{mototomo}
                            \sum_{k=1}^{\infty}b_k'=\sum_{k=1}^\infty \PPo{Z=k}=1.
                        \end{equation}

                        Now assume $k\le \log\log t$. For $1\le \tau \le t$ let 
                        $$
                            e_\tau=\max_{k\le \log\log{t}}|\Ex{\numdeg_k(\tau)}-(\tau+1) b_k'|.
                        $$
                        We  show that for every $t_0 \le \tau \le t$ we have
                        \begin{align}\label{4a}
                            e_\tau\le t_0+1+o((\tau-t_0)/\log\log{t}).
                        \end{align}
                        The proof is by induction. 
                        If $\tau=t_0$, we have
                        $$
                            |\Ex{\numdeg_k(t_0)}-(t_0+1) b_k'|\le t_0+1.
                        $$

                        By Lemma~\ref{lem:expchange} for $t_0< \tau \le t$ 
                        \begin{align*}
                            \Ex{\numdeg_k(\tau+1)}
                            &=\left(1-\frac{k}{2\tau}\right)\Ex{\numdeg_{k}(\tau)}+\frac{k-1}{2}\frac{\Ex{\numdeg_{k-1}(\tau)}}{\tau}+\PPo{Z=k}+o((\log\log{t})^{-1})\\
                            &=\left(1-\frac{k}{2\tau}\right)\left((\tau+1) b_k'+\Ex{\numdeg_{k}(\tau)}-(\tau+1) b_k'\right)\\
                            &\quad+\frac{k-1}{2}\frac{(\tau+1) b_{k-1}'+\Ex{\numdeg_{k-1}(\tau)}-(\tau+1) b_{k-1}'}{\tau}+\PPo{Z=k}+o((\log\log{t})^{-1})\\
                            &\stackrel{\eqref{eq:bkconn}}{=}(\tau+2) b_k'+ \left(1-\frac{k}{2\tau}\right)\left(\Ex{\numdeg_{k}(\tau)}-(\tau+1) b_k'\right)\\
                            &\quad+\frac{k-1}{2}\frac{\Ex{\numdeg_{k-1}(\tau)}-(\tau+1) b_{k-1}'}{\tau}+o((\log\log{t})^{-1}),
                        \end{align*}
                        where in the last step some terms have been included in the $o((\log\log t)^{-1})$ term.
                        We have 
                        \begin{align}\label{aaaa}
                            |\Ex{\numdeg_k(\tau+1)}-(\tau+2) b_k'|&\le \left(1-\frac{k}{2\tau}\right)\left|\Ex{\numdeg_{k}(\tau)}-(\tau+1) b_k'\right|\notag \\
                            &+\frac{k-1}{2\tau}\left|\Ex{\numdeg_{k-1}(\tau)}-(\tau+1) b_{k-1}'\right|+o((\log\log{t})^{-1})\notag\\
                            &\le \left(1-\frac{k}{2 \tau}\right)e_{\tau}+\frac{k-1}{2\tau}e_\tau+o((\log\log{t})^{-1})\notag\\
                            &\le e_{\tau}+o((\log\log{t})^{-1}).
                        \end{align}
                        Hence, formula \eqref{4a} follows by using the inductive hypothesis.

                        Since $t_0=t/\log\log{t}$ we have 
                        \begin{equation}\label{eq:expnumdeg}
                            \Ex{\numdeg_k(t)}=(t+1) b_k'+O(t/\log\log{t})+o((t-t_0)/\log\log{t})=(t+1) b_k'+O(t/\log\log{t}).
                        \end{equation}
                        Recalling $b_k'=b_k+o(1)$, we deduce the result for $k\le \log\log {t}$.

                        On the other hand since $\sum_{k=\lceil(\log\log t)^{1/2}\rceil+1}^\infty b_k'=o(1)$ we have
                        \begin{align*}
                            \sum_{k=1}^{\lceil(\log\log t)^{1/2}\rceil} \Ex{\numdeg_k(t)} & \overset{\eqref{eq:expnumdeg}}{=}
                            \sum_{k=1}^{\lceil(\log\log t)^{1/2}\rceil} (t+1) b_k'+o(t) \\
                            & \overset{\eqref{mototomo}}{=} t+1 -(t+1)\sum_{k=\lceil(\log\log t)^{1/2}\rceil+1}^\infty b_k' + o(t) \\
                            & = t+1 -o(t).
                        \end{align*}
                        By considering that $\sum_{j=1}^\infty \Ex{\numdeg_j(t)}= \Ex{\sum_{j=1}^\infty\numdeg_j(t)}=t+1$, it follows that $\Ex{\numdeg_k(t)}=o(t)$ for every $k\ge (\log\log{t})^{1/2}$. The result follows as in this range we also have $b_k=o(1)$.
                    \end{proof}

                    For the remainder of the paper we will show that $\numdeg_{k}(t)$ is concentrated around its expectation for every $k$. We will use a martingale argument to achieve this. First we will show that $\Ex{\numdeg_k(t)|Z_1,\ldots,Z_t}$ is concentrated around its expectation. Then that, conditionally on $\{Z_1=y_1, \ldots, Z_t=y_t\}$, if the total number of edges added is close to its expectation, then the degree sequence of the PARID-model is also concentrated around its expectation. We start with a technical lemma on the difference of two products.

                    \begin{lemma}\label{lem:proddiff}
                        Let $\xi_1,\ldots,\xi_n$ and $\zeta_1,\ldots,\zeta_n$ be positive real numbers satisfying $\prod_{h=1}^n \xi_h\ge \prod_{h=1}^n \zeta_h$. Then
                        $$
                            \prod_{h=1}^n \xi_h- \prod_{h=1}^n \zeta_h \le \prod_{h=1}^n \xi_h \sum_{j=1}^n \left| 1-\frac{\zeta_j}{\xi_j}\right|.
                        $$ 
                    \end{lemma}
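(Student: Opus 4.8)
The plan is to expand the difference of the two products as a telescoping sum of ``hybrid'' products and then bound each term separately. Concretely, for $0\le j\le n$ set $P_j=\left(\prod_{h=1}^{j}\zeta_h\right)\left(\prod_{h=j+1}^{n}\xi_h\right)$, so that $P_0=\prod_{h=1}^n\xi_h$ and $P_n=\prod_{h=1}^n\zeta_h$. Telescoping and factoring gives the identity
\[
\prod_{h=1}^n\xi_h-\prod_{h=1}^n\zeta_h=\sum_{j=1}^n\left(P_{j-1}-P_j\right)=\sum_{j=1}^n\left(\prod_{h=1}^{j-1}\zeta_h\right)\left(\prod_{h=j+1}^{n}\xi_h\right)\left(\xi_j-\zeta_j\right).
\]
Since $\left(\prod_{h=1}^n\xi_h\right)\left|1-\zeta_j/\xi_j\right|=\left(\prod_{h\ne j}\xi_h\right)\left|\xi_j-\zeta_j\right|$, it would be enough to show that the $j$-th summand above is at most $\left(\prod_{h\ne j}\xi_h\right)\left|\xi_j-\zeta_j\right|$ and then sum over $j$.

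The point where some care is needed is that this term-by-term estimate asks for $\prod_{h<j}\zeta_h\le\prod_{h<j}\xi_h$, which is false in general: the hypothesis controls only the \emph{full} products. The remedy is to first reorder the indices (applying the same permutation to $(\xi_h)$ and $(\zeta_h)$, which changes neither side of the desired inequality) so that all $h$ with $\zeta_h\le\xi_h$ come first, say in positions $1,\dots,m$, and all $h$ with $\zeta_h>\xi_h$ come last. For $j\le m$ every index $h<j$ then satisfies $\zeta_h\le\xi_h$, so $\prod_{h<j}\zeta_h\le\prod_{h<j}\xi_h$, and moreover $\xi_j-\zeta_j\ge 0$; hence the $j$-th summand is at most $\left(\prod_{h<j}\xi_h\right)\left(\prod_{h>j}\xi_h\right)\left(\xi_j-\zeta_j\right)=\left(\prod_{h\ne j}\xi_h\right)\left|\xi_j-\zeta_j\right|$. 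For $j>m$ we have $\xi_j-\zeta_j<0$ while the remaining factor is positive, so the $j$-th summand is negative and thus trivially below $\left(\prod_{h\ne j}\xi_h\right)\left|\xi_j-\zeta_j\right|$. Summing the two cases over $j$ yields the lemma.

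An essentially equivalent alternative is to write $\zeta_h=\xi_h(1-x_h)$ with $x_h:=1-\zeta_h/\xi_h<1$: if $\sum_h|x_h|\ge 1$ the inequality is immediate, since its right-hand side is then at least $\prod_h\xi_h\ge\prod_h\xi_h-\prod_h\zeta_h$; and if $\sum_h|x_h|<1$ then each $|x_h|\in[0,1)$, so $\prod_h(1-x_h)\ge\prod_h(1-|x_h|)\ge 1-\sum_h|x_h|$ by the Weierstrass product inequality, and multiplying through by $\prod_h\xi_h$ and rearranging gives the claim. Either way, the only genuine obstacle is realizing that a naive telescoping does not close term-by-term and that one must reorder the factors (or pass through the $1-x_h$ parametrization); everything after that is routine bookkeeping with empty-product conventions.
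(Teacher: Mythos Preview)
Your telescoping argument is correct and is essentially the paper's proof: both divide through by $\prod_h\xi_h$, reorder the factors, and telescope $1-\prod_h(\zeta_h/\xi_h)$ into terms $(1-\zeta_j/\xi_j)\prod_{h<j}(\zeta_h/\xi_h)$, bounding each by $|1-\zeta_j/\xi_j|$. The only cosmetic difference is the reordering used: the paper sorts the ratios $\zeta_h/\xi_h$ in increasing order and invokes the hypothesis $\prod_h\zeta_h/\xi_h\le 1$ to conclude that every prefix product of ratios is at most $1$, whereas you more simply put the indices with $\zeta_h\le\xi_h$ first and handle the remaining terms by observing they are negative. Your version is slightly slicker in that it never actually uses the hypothesis (the inequality in fact holds for all positive $\xi_h,\zeta_h$).

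Your second route via $x_h=1-\zeta_h/\xi_h$ and the Weierstrass product inequality $\prod_h(1-|x_h|)\ge 1-\sum_h|x_h|$ is a genuinely different and somewhat cleaner argument that avoids any reordering; it is not in the paper.
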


                    \begin{proof}
                        Note that $\prod_{h=1}^n \zeta_h/\xi_h\le 1$ and without loss of generality in the framework of our proof assume that $\zeta_1/\xi_1\le \zeta_2/\xi_2\le \ldots \le \zeta_n/\xi_n$. Then for every $j=1,\ldots,n$ we have
                        $$
                            \prod_{h=1}^j \frac{\zeta_h}{\xi_h}\le 1,
                        $$
                        as either $\zeta_{j+1}/\xi_{j+1}>1$ implying that $\zeta_h/\xi_h> 1$ for every $h\ge j+1$ and thus
                        $$
                            \prod_{h=1}^j \frac{\zeta_h}{\xi_h}\le \prod_{h=1}^{n} \frac{\zeta_h}{\xi_h} \le 1,
                        $$
                        or $\zeta_{j+1}/\xi_{j+1}\le 1$, implying that $\zeta_h/\xi_h\le 1$ for every $h\le j$. Therefore
                        $$
                            1-\prod_{h=1}^n\frac{\zeta_h}{\xi_h}=\sum_{j=1}^{n} \left(\prod_{h=1}^{j-1}\frac{\zeta_h}{\xi_h}-\prod_{h=1}^{j}\frac{\zeta_h}{\xi_h}\right)\le \sum_{j=1}^{n} \left|1-\frac{\zeta_j}{\xi_j}\right| \left(\prod_{h=1}^{j-1}\frac{\zeta_h}{\xi_h}\right)\le \sum_{j=1}^{n} \left|1-\frac{\zeta_j}{\xi_j}\right|.
                        $$
                        The result follows by multiplying both sides of the inequality by $\prod_{h=1}^n \xi_h$.
                    \end{proof}

                    As we will use a martingale argument to show that the random variable $\Ex{\numdeg_k(t)\mid Z_1,\ldots,Z_t}$ is concentrated we need to analyse how changing the value of one of the $Z_i$-s affects this conditional expectation, in particular the probability that one of the vertices has degree $k$.

                    \begin{lemma}\label{lem:condchange}
                        Let $1\le \tau \le t$.
                        For $y_1,\ldots,y_t$ and $y'$ positive integers, denote the events $\{Z_1=y_1,\ldots,Z_\tau=y_\tau,\ldots,Z_t=y_t\}$ and $\{Z_1=y_1,\ldots,Z_\tau=y',\ldots,Z_t=y_t\}$ with $\mathcal{Y}$ and $\mathcal{Y}'$ respectively. Then for any positive integer $k$ and any positive integer $k< i\le t$, $i\neq \tau$  we have
                        $$
                            \left|\PPo{d_{i}(t)=k \mid \mathcal{Y}}-\PPo{d_{i}(t)=k \mid \mathcal{Y}'}\right|\le \frac{3k |y_\tau-y'|}{\sum_{\substack{j=1\\j\neq\tau}}^{i}y_j}\left(1+\frac{y'+\sum_{j=\tau}^t y_j}{\sum_{\substack{j=1\\j\neq\tau}}^{i}y_j}\right).
                        $$
                    \end{lemma}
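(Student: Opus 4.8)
The plan is first to reduce to the case $y_i\le k$: otherwise both conditional probabilities are $0$, since $v_i$ is born with degree $y_i$ and degrees never decrease. Set $S:=\sum_{j=1,\,j\neq\tau}^{i}y_j$ and note $S\ge i-1\ge k\ge 1$ because $i>k$; this is the quantity the bound is phrased in, so every attachment denominator will have to be controlled from below by it.

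The main idea is to couple the degree processes of $v_i$ under $\mathcal Y$ and under $\mathcal Y'$ on one probability space and estimate the chance that they ever disagree. Conditional on $\mathcal Y$, the sequence $(d_i(s))_{s\ge i}$ is Markovian: $d_i(i)=y_i$, and at step $s\ge i+1$ each of the $y_s$ edges added at that step is incident to $v_i$ independently with probability $d_i(s-1)/(2L_{s-1})$, where $L_{s-1}=\sum_{j=1}^{s-1}y_j$; the same holds under $\mathcal Y'$ with $y_\tau$ replaced by $y'$ and $L_{s-1}$ replaced by $L'_{s-1}=L_{s-1}+(y'-y_\tau)\mathbf 1\{s-1\ge\tau\}$. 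I would drive both processes by i.i.d.\ uniforms $U_{s,r}$, one per step $s$ and index $1\le r\le\max(y_s,y'_s)$, declaring edge $r$ of step $s$ incident to $v_i$ iff $U_{s,r}$ is below the corresponding threshold (an edge present in one process but absent in the other contributes $0$ to the latter). Letting $T^\ast$ be the first step carrying an edge whose incidence decision differs, the two sequences agree before $T^\ast$; since both are non-decreasing, on the event that $d_i(t)\ne d_i'(t)$ while one of them equals $k$ we must have $T^\ast\le t$ and $d_i(T^\ast-1)=d_i'(T^\ast-1)\le k$, so
\[
  \bigl|\PPo{d_i(t)=k\mid\mathcal Y}-\PPo{d_i(t)=k\mid\mathcal Y'}\bigr|\le\sum_{s=i+1}^{t}\PPo{T^\ast=s,\ d_i(s-1)\le k}.
\]

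Next I would bound each summand by conditioning on the configuration just before step $s$: on $\{T^\ast\ge s,\ d_i(s-1)\le k\}$ both processes agree through step $s-1$ with common degree $d\le k$, and a union bound over the edges of step $s$ yields $0$ if $s<\tau$ (edge counts and denominators coincide), at most $k|y_\tau-y'|/(2L_{\tau-1})$ if $s=\tau$ (then necessarily $\tau>i$; the $\min(y_\tau,y')$ edges common to the two processes use the \emph{same} probability $d/(2L_{\tau-1})=d/(2L'_{\tau-1})$ because $L_{\tau-1}$ does not involve $Z_\tau$, so only the $|y_\tau-y'|$ surplus edges can differ), and at most $ky_s|y_\tau-y'|/(2L_{s-1}L'_{s-1})$ if $s>\max(i,\tau)$, using $|1/L_{s-1}-1/L'_{s-1}|=|y_\tau-y'|/(L_{s-1}L'_{s-1})$. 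Using $L_{\tau-1}\ge\sum_{j=1}^{i}y_j=S$ when $\tau>i$, $L_{s-1}\ge S$ and $L'_{s-1}\ge S+y'$ when $s>\max(i,\tau)$, and $\sum_{s>\max(i,\tau)}y_s\le\sum_{j=\tau}^{t}y_j$, summing over $s$ gives
\[
  \bigl|\PPo{d_i(t)=k\mid\mathcal Y}-\PPo{d_i(t)=k\mid\mathcal Y'}\bigr|\le\frac{k|y_\tau-y'|}{2S}+\frac{k|y_\tau-y'|}{2S(S+y')}\sum_{j=\tau}^{t}y_j,
\]
which is at most $\tfrac{3k|y_\tau-y'|}{S}\bigl(1+\tfrac{y'+\sum_{j=\tau}^{t}y_j}{S}\bigr)$, the claimed bound (with some room to spare).

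The step I expect to be the main obstacle is $s=\tau$ in the case $\tau>i$, where one changes the \emph{number} of inserted edges rather than just the attachment denominators: the crude estimate $ky_\tau/(2L_{\tau-1})$ is useless. The point that saves the argument is that $L_{\tau-1}=\sum_{j<\tau}y_j$ is independent of $Z_\tau$, so the coupling can be set up so that the $\min(y_\tau,y')$ overlapping edges of step $\tau$ make identical decisions, leaving only $|y_\tau-y'|$ edges free to create a discrepancy; the rest — the union bound over edges, the coupling bookkeeping, and the elementary bounds $L_{s-1}\ge S$ — is routine. As an alternative to the coupling, the contribution of the steps $s>\max(i,\tau)$ could instead be obtained by conditioning on the entire degree trajectory of $v_i$ and invoking Lemma~\ref{lem:proddiff} to compare the two trajectory probabilities, which are products over steps that differ only through $L_{s-1}$ versus $L'_{s-1}$.
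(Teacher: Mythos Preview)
Your argument is correct. The coupling via shared uniforms $U_{s,r}$ gives the right marginals, and your case analysis for step $s$ is sound: for $s<\tau$ nothing can differ; for $s=\tau$ (necessarily $\tau>i$) the key observation $L_{\tau-1}=L'_{\tau-1}$ means the $\min(y_\tau,y')$ overlapping edges make identical decisions, leaving only the $|y_\tau-y'|$ surplus edges; for $s>\max(i,\tau)$ a union bound over edges yields $y_s\cdot d\,|1/(2L_{s-1})-1/(2L'_{s-1})|$. Your lower bounds $L_{s-1}\ge S$ and $L'_{s-1}\ge S+y'$ hold in both cases $\tau<i$ and $\tau>i$, and the observation $d_i(T^\ast-1)\le k$ on the relevant event is what lets you insert the factor $k$ throughout. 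The final inequality indeed holds with slack.

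The paper takes a different route: instead of coupling, it decomposes $\PPo{d_i(t)=k\mid\mathcal Y}$ and $\PPo{d_i(t)=k\mid\mathcal Y'}$ as sums over the exact set $S$ of rounds in which $v_i$ gains an edge, introduces an intermediate quantity $A_S$ (the $\mathcal Y$-probability restricted to the round set $\mathcal R_{\mathcal Y'}(i)$), and then bounds $|\PPo{\mathcal E_S\mid\mathcal Y}-A_S|$ and $|\PPo{\mathcal E_S\mid\mathcal Y'}-A_S|$ via the product-difference Lemma~\ref{lem:proddiff}. The contribution of the $|y_\tau-y'|$ extra rounds at step $\tau$ is isolated as $\sum_\ell\PPo{\mathcal F_\ell\mid\mathcal Y}$, which is exactly your ``surplus edges'' term. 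So the two proofs agree on the crucial step-$\tau$ idea but diverge afterwards: the paper's approach is algebraic and trajectory-based, relying on Lemma~\ref{lem:proddiff}, while yours is probabilistic and bypasses that lemma entirely. You in fact anticipated the paper's method in your closing remark about conditioning on the full trajectory and invoking Lemma~\ref{lem:proddiff}; that is precisely what the paper does. Your coupling is shorter and yields a somewhat sharper constant, at the cost of requiring the reader to track the coupling bookkeeping carefully.
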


                    \begin{proof}
                        Since $d_{i}(t)$ is a monotone non decreasing function of $t$ both of these probabilities are zero, and hence the statement holds, if  the initial degree $y_i$ of $v_i$ is strictly larger than $k$. 
	
                        Now consider the case $y_i\le k$ and without loss of generality assume $y_\tau\ge y'$. 
                        In order to have a clearer view of the process we will break up each step into rounds, see Figure~\ref{fig:rounds}. In each round exactly one edge is inserted, and thus the $j$-th step consists of $Z_j$ rounds. Denote by $(j,\ell)$ the $\ell$-th round in step $j$.
                        Recall that 
                        $L(\tau)=\sum_{j=1}^{\tau}Z_{j}$
                        and denote by $L_{\mathcal{Y}}(\tau)$ and $L_{\mathcal{Y}'}(\tau)$ the value of this random variable when conditioning on $\mathcal{Y}$ and $\mathcal{Y}'$ respectively.
	
                        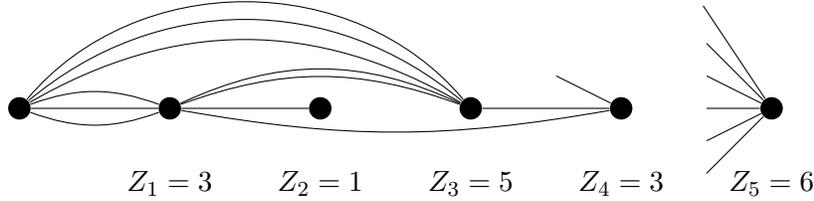
\begin{figure}[H]
                            \centering
                            \begin{tikzpicture}
	
                                \node (X0) at (0,0) [circle,fill=black, inner sep=3pt] {};
                                \node (X1) at (2,0) [circle, fill=black, inner sep=3pt] {};
                                \node (X2) at (4,0) [circle, fill=black, inner sep=3pt] {};
                                \node (X3) at (6,0) [circle, fill=black,inner sep=3pt] {};
                                \node (X4) at (8,0) [circle, fill=black, inner sep=3pt] {};
                                \node (X5) at (10,0) [circle, fill=black, inner sep=3pt] {};
    
                                \node (Y43) at (7,0.5) {};
                                \node (Y51) at (9,-1) {};
                                \node (Y52) at (9,-0.5) {};
                                \node (Y53) at (9,0) {};
                                \node (Y54) at (9,0.5) {};
                                \node (Y55) at (9,1) {};
                                \node (Y56) at (9,1.5) {};    
	
                                \node (Z1) at (2,-1) {$Z_1=3$};
                                \node (Z2) at (4,-1) {$Z_2=1$};
                                \node (Z3) at (6,-1) {$Z_3=5$};
                                \node (Z4) at (8,-1) {$Z_4=3$};
                                \node (Z5) at (10,-1) {$Z_5=6$};
	
                                \draw (X0)--(X1);
                                \draw (X0) to[out=-20,in=-160] (X1);
                                \draw (X0) to[out=20,in=160] (X1);
                                \draw (X1)--(X2);
                                \draw (X0) to[out=30,in=150] (X3);
                                \draw (X0) to[out=40,in=140] (X3);
                                \draw (X0) to[out=50,in=130] (X3);
                                \draw (X1) to[out=20,in=160] (X3);
                                \draw (X1) to[out=25,in=155] (X3);
                                \draw (X3)--(X4);
                                \draw (X1) to[out=-10,in=-170] (X4);
                                \draw (X4)--(Y43);
                                \draw (X5)--(Y51);
                                \draw (X5)--(Y52);
                                \draw (X5)--(Y53);
                                \draw (X5)--(Y54);
                                \draw (X5)--(Y55);
                                \draw (X5)--(Y56);

                            \end{tikzpicture}\\
                            \caption{The process at the beginning of round (4,3), conditional on the $Z_j$-s.}
                            \label{fig:rounds}
                        \end{figure}

                        In order for the vertex $v_i$ to have degree $k$ by the end of step $t$, the number of rounds where an edge is connected to $v_i$ between the end of step $i$ and the end of step $t$ must be exactly $k-y_i$.
                        Let $\mathcal{R}_{\mathcal{Y}}(i)$ be the set of rounds between the end of step $i$ and the end of step $t$, conditional on $\mathcal{Y}$, that is 
                        $$
                            \mathcal{R}_{\mathcal{Y}}(i)=\{(j,\ell):i<j\le t, 1\le \ell \le y_j\},
                        $$
                        and define $\mathcal{R}_{\mathcal{Y}'}(i)$ analogously. Note that $\mathcal{R}_{\mathcal{Y}'}(i)\subseteq \mathcal{R}_{\mathcal{Y}}(i)$, indeed the two sets can only differ with respect to the rounds of the form $(\tau,\ell)$ where $y'<\ell \le y_{\tau}$. Recall that only rounds in the steps larger than $i$ are included in $\mathcal{R}_{\mathcal{Y}'}(i)$ and $\mathcal{R}_{\mathcal{Y}}(i)$. Therefore if $\tau< i$ then $\mathcal{R}_{\mathcal{Y}'}(i)= \mathcal{R}_{\mathcal{Y}}(i)$, otherwise when $\tau>i$ then
                        \begin{equation}\label{eq:diffR}
                            \mathcal{R}_{\mathcal{Y}}(i)\setminus \mathcal{R}_{\mathcal{Y}'}(i)= \{(\tau,\ell): y'<\ell \le y_{\tau}\}.
                        \end{equation}

                        For a set $S\subset \mathbb{N}\times \mathbb{N}$ let $\mathcal{E}_S$ be the event that the degree of $v_i$ increases exactly in the rounds corresponding to $S$. Then
                        \begin{align*}
                            \PPo{d_{i}(t)=k \mid \mathcal{Y}}&=\sum_{\substack{S\subseteq \mathcal{R}_{\mathcal{Y}}(i)\\|S|=k-y_i}}\PPo{\mathcal{E}_S\mid \mathcal{Y}}\\
                            \PPo{d_{i}(t)=k \mid \mathcal{Y}'}&=\sum_{\substack{S'\subseteq \mathcal{R}_{\mathcal{Y'}}(i)\\|S'|=k-y_i}}\PPo{\mathcal{E}_{S'}\mid \mathcal{Y}'}.
                        \end{align*}
	
                        Our aim is to establish the difference of these two sums, and we will compare the individual terms within the sums. 
                        More precisely we will compare the terms on which $S$ and $S'$ coincide, and account for the terms corresponding to sets involving elements from $\mathcal{R}_{\mathcal{Y}}(i)\setminus \mathcal{R}_{\mathcal{Y}'}(i)$ separately.
                        Formally
                        \begin{align}
                            \left|\PPo{d_{i}(t)=k \mid \mathcal{Y}}-\PPo{d_{i}(t)=k \mid \mathcal{Y}'}\right|&\le \sum_{\substack{S\subseteq \mathcal{R}_{\mathcal{Y}'}(i)\\|S|=k-y_i}}\left|\PPo{\mathcal{E}_{S}\mid \mathcal{Y}}-\PPo{\mathcal{E}_S\mid \mathcal{Y}'}\right|\nonumber\\
                            &+\sum_{\ell=y'+1}^{y_{\tau}}\PPo{\mathcal{F}_\ell\mid \mathcal{Y}}, \label{eq:probdiff}
                        \end{align}
                        where $\mathcal{F}_\ell$ denotes the event that the degree of $v_i$ increases in round $(\tau,\ell)$, and has degree at most $k-1$ by the end of step $\tau-1$, that is $d_i(\tau-1)< k$. Note that in \eqref{eq:probdiff},  the probability is increased as we require at least one round where an edge is connected to $v_i$ instead of exactly $k-y_i$.
                        Recall that $i\neq\tau$ then for $y'< \ell \le y_{\tau}$ we have
                        $$
                            \PPo{\mathcal{F}_\ell\mid \mathcal{Y}}\le
                            \left\{
                            \begin{array}{ll}
                                \frac{k}{2L_{{\mathcal{Y}}}(\tau-1)}  & \mbox{if } i < \tau ,\\
                                0 & \mbox{if } i > \tau,
                            \end{array}
                            \right.
                        $$
                        implying
                        \begin{equation}\label{eq:error1}
                            \sum_{\ell=y'+1}^{y_{\tau}}\PPo{\mathcal{F}_\ell\mid \mathcal{Y}}\le |y_\tau-y'|\frac{k}{2L_{{\mathcal{Y}}}(i)}.
                        \end{equation}
                        In \eqref{eq:probdiff} we consider $S \subseteq \mathcal{R}_{\mathcal{Y}'}(i)$ and note that $d_i(j)$ remains the same for every $i\le j$ if we condition on the events $\mathcal{E}_S$ and $\mathcal{Y}$ or on the events $\mathcal{E}_S$ and $\mathcal{Y}'$. Denote this value by $d_{i,S}(j)$.  
                        Then we have
                        \begin{align*}
                            \PPo{\mathcal{E}_S\mid \mathcal{Y}'}&=\prod_{(j,\ell)\in S} \frac{d_{i,S}(j-1)}{2L_{\mathcal{Y}'}(j-1)} \prod_{(j,\ell)\in \mathcal{R}_{\mathcal{Y}'}(i)\setminus S}\left(1-\frac{d_{i,S}(j-1)}{2L_{\mathcal{Y}'}(j-1)}\right)\\
                            \PPo{\mathcal{E}_{S}\mid \mathcal{Y}}&=\prod_{(j,\ell)\in S} \frac{d_{i,S}(j-1)}{2L_{\mathcal{Y}}(j-1)} \prod_{(j,\ell)\in \mathcal{R}_{\mathcal{Y}}(i)\setminus S}\left(1-\frac{d_{i,S}(j-1)}{2L_{\mathcal{Y}}(j-1)}\right).
                        \end{align*}
 
                        In order to simplify the comparison of these two probabilities we introduce the following value
                        $$
                            A_S=\prod_{(j,\ell)\in S} \frac{d_{i,S}(j-1)}{2L_{\mathcal{Y}}(j-1)} \prod_{(j,\ell)\in\mathcal{R}_{\mathcal{Y}'}(i)\setminus S}\left(1-\frac{d_{i,S}(j-1)}{2L_{\mathcal{Y}}(j-1)}\right).
                        $$
                        Note that the only difference between $A_S$ and $\PPo{\mathcal{E}_{S}\mid \mathcal{Y}}$ is that the second product is over $\mathcal{R}_{\mathcal{Y}'}(i)\setminus S$ instead of $\mathcal{R}_{\mathcal{Y}}(i)\setminus S$. 
                        Recall that if $\tau<i$ then $\mathcal{R}_{\mathcal{Y}'}(i)=\mathcal{R}_{\mathcal{Y}}(i)$, implying $\PPo{\mathcal{E}_{S}\mid \mathcal{Y}}=A_S$. On the other hand for $i<\tau$ by \eqref{eq:diffR} we have
                        \begin{equation}\label{eq:error2}
                            |\PPo{\mathcal{E}_{S}\mid \mathcal{Y}}-A_S|= A_S\left(1-\prod_{\ell=y'+1}^{y_{\tau}}\left(1-\frac{d_{i,S}(\tau-1)}{2L_{\mathcal{Y}}(\tau-1)}\right)\right)\le  A_S\frac{k|y_\tau-y'|}{2L_{\mathcal{Y}}(i)},
                        \end{equation}
                        where, for the last inequality, we applied Lemma~\ref{lem:proddiff} choosing $\xi_h = 1$ and $\zeta_h = 1-\frac{d_{i,S}(\tau -1)}{2L_{\mathcal{Y}}(\tau -1)}$. Further, we took into account that $d_{i,S}(\tau-1)\le k$  and $L_{\mathcal{Y}}(\tau-1)\ge L_{\mathcal{Y}}(i)$. 
	
                        Next, we focus on $|\PPo{\mathcal{E}_{S}\mid \mathcal{Y}'}-A_S|$.
                        As the only difference between $\mathcal{Y}$ and $\mathcal{Y}'$ is that the value of $Z_{\tau}$ changes from $y_{\tau}$ to $y'$, for every $1\le j \le t$ we have
                        \begin{equation}\label{renumer}
                            L_{\mathcal{Y}}(j)-L_{\mathcal{Y}'}(j) = \left\{
                            \begin{array}{ll}
                                0  & \mbox{if } j< \tau \\
                                y_\tau-y' & \mbox{if } j\ge \tau.
                            \end{array}
                            \right. 
                        \end{equation}
                        By \eqref{renumer} we have, for every $1\le j < \tau$, that
                        \begin{equation}\label{eq:diff0}
                            \left| 1-\frac{d_{i,S}(j)}{2 L_{\mathcal{Y}}(j)} \frac{2 L_{\mathcal{Y}'}(j)}{d_{i,S}(j)} \right|=0 \quad \mbox{and}\quad \left|1-\left(1-\frac{d_{i,S}(j)}{2L_{\mathcal{Y}'}(j)}\right)\left(1-\frac{d_{i,S}(j)}{2L_{\mathcal{Y}}(j)}\right)^{-1}\right|=0.
                        \end{equation}
                        In the following we will also establish these values when $\tau\ge i$. Using the inequality of arithmetic and geometric means one can quickly show that for any $0<a\le b$ we have $|1-a/b|\le |1-b/a|$.
                        Therefore for any $\max\{i,\tau\}\le j \le t-1$ and $S$ we have
                        \begin{equation}\label{eq:diff1}
                            \left| 1-\frac{d_{i,S}(j)}{2 L_{\mathcal{Y}}(j)} \frac{2 L_{\mathcal{Y}'}(j)}{d_{i,S}(j)} \right|\le \left| 1-\frac{d_{i,S}(j)}{2 L_{\mathcal{Y}'}(j)} \frac{2 L_{\mathcal{Y}}(j)}{d_{i,S}(j)} \right| \le \frac{2|y_{\tau}-y'|}{2 L_{\mathcal{Y}'}(j)},
                        \end{equation}
                        and also
                        \begin{align}
                            \left|1-\left(1-\frac{d_{i,S}(j)}{2L_{\mathcal{Y}'}(j)}\right)\left(1-\frac{d_{i,S}(j)}{2L_{\mathcal{Y}}(j)}\right)^{-1}\right|
                            &\le \left|1-\left(1-\frac{d_{i,S}(j)}{2L_{\mathcal{Y}}(j)}\right)\left(1-\frac{d_{i,S}(j)}{2L_{\mathcal{Y}'}(j)}\right)^{-1}\right|\nonumber\\
                            &=\left|\left(\frac{d_{i,S}(j)}{2L_{\mathcal{Y}}(j)}-\frac{d_{i,S}(j)}{2L_{\mathcal{Y}'}(j)}\right)\left(1-\frac{d_{i,S}(j)}{2L_{\mathcal{Y}'}(j)}\right)^{-1}\right|\nonumber\\
                            &\le \frac{2k|y_\tau-y'|}{2L_{\mathcal{Y}}(j)(2L_{\mathcal{Y}'}(j)-k)}\le \frac{k|y_\tau-y'|}{(L_{\mathcal{Y}'}(j))^2},\label{eq:diff2}
                        \end{align}
                        where in the second-to-last step we used that $d_{i,S}(j) \le k$ and  \eqref{renumer}. Further, in the last step we also used that $k<i \le j$ which implies $L_{\mathcal{Y}'}(j)\ge L_{\mathcal{Y}'}(k)\ge k$ and $L_{\mathcal{Y}'}(j)\le L_{\mathcal{Y}}(j)$.
                        Applying twice Lemma~\ref{lem:proddiff}, first on $A_S - \PPo{\mathcal{E}_S\mid \mathcal{Y}'}$ and then on $\PPo{\mathcal{E}_S\mid \mathcal{Y}'}-A_S$, and subsequently using \eqref{eq:diff0} for the steps $i\le j < \tau$, followed by \eqref{eq:diff1} and \eqref{eq:diff2} for the steps $\max\{i,\tau \} \le j \le t$, we obtain
                        \begin{align}
                            |\PPo{\mathcal{E}_S\mid \mathcal{Y}'}-A_{S}|
                            &\le A_{S}\left(\sum_{\substack{j\in S\\j\ge \tau}}\frac{ |y_\tau-y'|}{L_{\mathcal{Y}'}(i)}+\sum_{\substack{(j,\ell)\in \mathcal{R}_{\mathcal{Y}'}(i)\\j\ge \tau}}\frac{k|y_\tau-y'|}{(L_{\mathcal{Y}'}(i))^2}
                            \right)\nonumber \\
                            &+\PPo{\mathcal{E}_S\mid \mathcal{Y}'}\left(\sum_{\substack{j\in S\\ j\ge \tau}}\frac{ |y_\tau-y'|}{L_{\mathcal{Y}'}(i)}+\sum_{\substack{(j,\ell)\in \mathcal{R}_{\mathcal{Y}'}(i)\\j\ge \tau}}\frac{k|y_\tau-y'|}{(L_{\mathcal{Y}'}(i))^2}\right)\nonumber\\
                            &\le(A_S+\PPo{\mathcal{E}_S\mid \mathcal{Y}'})k|y_{\tau}-y'|\left(\frac{1}{L_{\mathcal{Y}'}(i)}+\frac{\sum_{j=\tau}^ty_j}{(L_{\mathcal{Y}'}(i))^2}\right),
                            \label{eq:error3}
                        \end{align}
                        where in the last line we used that $|\{(j,\ell)\in \mathcal{R}_{\mathcal{Y}'}(i):j\ge \tau\}|\le \sum_{j=\tau}^t y_j$.
                        Note that
                        $$
                            \sum_{\substack{S\subseteq \mathcal{R}_{\mathcal{Y}'}(i)\\|S|=k-y_i}}\PPo{\mathcal{E}_S\mid \mathcal{Y}'}\le1 \quad \mbox{and} \quad \sum_{\substack{S\subseteq \mathcal{R}_{\mathcal{Y}'}(i)\\|S|=k-y_i}}A_{S} \le 1,
                        $$
                        as in both sums we consider the probabilities of disjoint events specifying the exact rounds in $\mathcal{R}_{\mathcal{Y}'}(i)$ where the degree of $v_i$ increases, conditional on $\mathcal{Y}'$ and on both $\mathcal{Y}'$ and $\mathcal{Y}$ respectively.
                        Hence,
                        $$
                            \sum_{\substack{S\subseteq \mathcal{R}_{\mathcal{Y}'}(i)\\|S|=k-y_i}}(A_S + \PPo{\mathcal{E}_S\mid \mathcal{Y}'})\le 2,
                        $$
                        and it follows from \eqref{eq:error2} and \eqref{eq:error3}
                        \begin{equation}\label{eq:error4}
                            \sum_{(j,\ell)\in \mathcal{R}_{\mathcal{Y}'}(i)}|\PPo{\mathcal{E}_S\mid \mathcal{Y}}-\PPo{\mathcal{E}_S\mid \mathcal{Y}'}|
                            \le k|y_\tau-y'| \left(\frac{5}{2L_{\mathcal{Y}'}(i)}+\frac{2\sum_{j=\tau}^ty_j}{(L_{\mathcal{Y}'}(i))^2}\right).
                        \end{equation}
    
                        Substituting \eqref{eq:error1} and \eqref{eq:error4} into \eqref{eq:probdiff} gives
                        \begin{equation*}
                            \left|\PPo{d_{i}(t)=k \mid \mathcal{Y}}-\PPo{d_{i}(t)=k \mid \mathcal{Y}'}\right|
                            \le k|y_\tau-y'| \left(\frac{3}{L_{\mathcal{Y}'}(i)}+\frac{2\sum_{j=\tau}^ty_j}{(L_{\mathcal{Y}'}(i))^2}\right).
                        \end{equation*}
                        The result follows as 
                        $$
                            L_{\mathcal{Y}'}(i)\ge \sum_{\substack{j=1\\j\neq \tau}}^{i}y_j,
                        $$
                        and we compensate for the assumption $y_\tau\ge y'$.
                    \end{proof}

                    In the following lemma we relax the conditioning, and condition on the values of $Z_i$ only up to $\tau$, instead of up to $t$. Recall $t_0=t/\log\log{t}$.

                    \begin{lemma}\label{lem:change}
                        Let $1 \le \tau \le t$ and $y_1,\ldots,y_\tau$ and $y'$ positive integers satisfying $|\sum_{j=1}^{s}y_j-\Ex{L(\tau-1)}|\le t \log^{2/3}t$ for every $1\le s \le \tau-1$. 
                        Denote the events $\{Z_1=y_1,\ldots,Z_\tau=y_\tau\}$ and $\{Z_1=y_1,\ldots,Z_\tau=y'\}$ with $\mathcal{W}$ and $\mathcal{W}'$ respectively. Then for any positive integer $2t_0\le i\le t$ with $i\neq \tau$ and any fixed integer $1\le k\le t_0$ we have
                        $$\left|\PPo{d_{i}(t)=k \mid \mathcal{W}}-\PPo{d_{i}(t)=k \mid \mathcal{W}'}\right|\le 		 k |y_\tau-y'|O\left(\frac{t}{t_0^2 \log {t}}\right). $$
                    \end{lemma}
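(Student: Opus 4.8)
The plan is to deduce the bound from Lemma~\ref{lem:condchange} by averaging over the still-unconditioned variables $Z_{\tau+1},\dots,Z_t$. For fixed positive integers $y_{\tau+1},\dots,y_t$ set $\mathcal{Y}=\{Z_1=y_1,\dots,Z_\tau=y_\tau,Z_{\tau+1}=y_{\tau+1},\dots,Z_t=y_t\}$ and $\mathcal{Y}'=\{Z_1=y_1,\dots,Z_\tau=y',Z_{\tau+1}=y_{\tau+1},\dots,Z_t=y_t\}$. Since $Z_1,\dots,Z_t$ are independent, the conditional law of $(Z_{\tau+1},\dots,Z_t)$ given $\mathcal{W}$ equals the one given $\mathcal{W}'$, so
\begin{align*}
\left|\PPo{d_{i}(t)=k\mid\mathcal{W}}-\PPo{d_{i}(t)=k\mid\mathcal{W}'}\right|\le\Ex{\bigl|\PPo{d_{i}(t)=k\mid\mathcal{Y}}-\PPo{d_{i}(t)=k\mid\mathcal{Y}'}\bigr|\mid\mathcal{W}},
\end{align*}
where the outer expectation runs over $(Z_{\tau+1},\dots,Z_t)$ only. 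Since $1\le k\le t_0<2t_0\le i$ and $i\neq\tau$, Lemma~\ref{lem:condchange} bounds every term inside this expectation, so it remains to show that
\begin{align*}
\Ex{\frac{3k|y_\tau-y'|}{D}\Bigl(1+\frac{y'+\sum_{j=\tau}^{t}y_j}{D}\Bigr)\mid\mathcal{W}}\le k|y_\tau-y'|\,O\Bigl(\frac{t}{t_0^{2}\log t}\Bigr),\qquad D:=\sum_{\substack{j=1\\ j\neq\tau}}^{i}y_j .
\end{align*}
As $k|y_\tau-y'|$ is fixed by $\mathcal{W}$, it suffices to prove $\Ex{D^{-1}\mid\mathcal{W}}=O((t_0\log t)^{-1})$ and $\Ex{(y'+\sum_{j=\tau}^{t}y_j)D^{-2}\mid\mathcal{W}}=O(t/(t_0^{2}\log t))$; the contribution of the first is then absorbed using $t/t_0=\log\log t\ge1$.

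The core estimate is $\Ex{D^{-\ell}\mid\mathcal{W}}=O((t_0\log t)^{-\ell})$ for $\ell=1,2$. Suppose first that $\min\{i,\tau-1\}\ge t_0$. Then $D$ is determined by $\mathcal{W}$ and $D\ge\sum_{j=1}^{\min\{i,\tau-1\}}y_j$; since $\min\{i,\tau-1\}\ge t_0$ and $t\log^{2/3}t=o(t_0\log t)$, the partial-sum hypothesis of the lemma gives $D=\Omega(t_0\log t)$, hence $\Ex{D^{-\ell}\mid\mathcal{W}}=O((t_0\log t)^{-\ell})$. The remaining possibility, using $i\ge 2t_0$ and $i\neq\tau$, is $i>\tau$ with $\tau\le t_0$, so $i-\tau\ge 2t_0-t_0=t_0$; here $D\ge D_1:=\sum_{j=\tau+1}^{i}Z_j$, a sum of $i-\tau\in[t_0,t]$ independent copies of $Z$ that is independent of $\mathcal{W}$, and Lemma~\ref{lem:Zmoments} gives $\Ex{D^{-\ell}\mid\mathcal{W}}\le\Ex{D_1^{-\ell}}\le(1+o(1))(\beta''(i-\tau)\log t)^{-\ell}=O((t_0\log t)^{-\ell})$.

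It remains to handle the numerator $y'+\sum_{j=\tau}^{t}y_j=y'+y_\tau+\sum_{j=\tau+1}^{t}Z_j$. The truncation forces $y',y_\tau\le t\log\log t+1=O(t\log t)$, and by \eqref{eq:firstmomZ} we have $\Ex{\sum_{j=\tau+1}^{t}Z_j}=(t-\tau)\Ex{Z}=O(t\log t)$. In the case $\min\{i,\tau-1\}\ge t_0$, where $D$ is deterministic and $\Omega(t_0\log t)$, this already gives $\Ex{(y'+\sum_{j=\tau}^{t}y_j)D^{-2}\mid\mathcal{W}}\le D^{-2}\,O(t\log t)=O(t/(t_0^{2}\log t))$. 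In the case $i>\tau$, $\tau\le t_0$, split $\sum_{j=\tau+1}^{t}Z_j=D_1+\sum_{j=i+1}^{t}Z_j$ with $D_1\le D$: the fixed part contributes $(y'+y_\tau)\Ex{D^{-2}\mid\mathcal{W}}=O(t\log t)\,O((t_0\log t)^{-2})=O(t/(t_0^{2}\log t))$; the overlapping part contributes $\Ex{D_1 D^{-2}\mid\mathcal{W}}\le\Ex{D_1^{-1}}=O((t_0\log t)^{-1})$; and, as $Z_{i+1},\dots,Z_t$ are independent of $D$, the last part contributes $\Ex{\sum_{j=i+1}^{t}Z_j}\,\Ex{D^{-2}\mid\mathcal{W}}=O(t\log t)\,O((t_0\log t)^{-2})=O(t/(t_0^{2}\log t))$. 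Summing the pieces, and using $(t_0\log t)^{-1}\le t/(t_0^{2}\log t)$ for the first term, yields the claimed bound. The only delicate point is this last case, in which $D$ and the tail $\sum_{j=\tau}^{t}y_j$ are correlated random variables: the remedy is precisely to peel off the overlapping block $Z_{\tau+1},\dots,Z_i$ (controlled via $D_1\le D$ and Lemma~\ref{lem:Zmoments}) from the block $Z_{i+1},\dots,Z_t$, which is genuinely independent of $D$.
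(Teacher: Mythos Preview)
Your argument is essentially the paper's own proof: you average over $Z_{\tau+1},\dots,Z_t$, apply Lemma~\ref{lem:condchange}, and split into the same two cases ($\tau>t_0$ versus $\tau\le t_0$, equivalently $\min\{i,\tau-1\}\ge t_0$ versus its complement), using Lemma~\ref{lem:Zmoments} and the partial-sum hypothesis exactly as the paper does. The decomposition $\sum_{j=\tau+1}^{t}Z_j=D_1+\sum_{j=i+1}^{t}Z_j$ with the trick $D_1D^{-2}\le D_1^{-1}$ matches the paper's manipulation.

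There is one small slip. In the case $\min\{i,\tau-1\}\ge t_0$ you assert twice that $D=\sum_{j\neq\tau,\,j\le i}y_j$ is \emph{determined by $\mathcal{W}$}; this is false when $i>\tau$, since then $D$ contains the random block $\sum_{j=\tau+1}^{i}Z_j$. What is true, and all you actually need, is that $D$ has the deterministic lower bound $D\ge\sum_{j=1}^{\tau-1}y_j=\Omega(t_0\log t)$, so $D^{-2}\le O((t_0\log t)^{-2})$ almost surely and can be pulled outside the expectation. With this correction your bound
\[
\Ex{\Bigl(y'+\textstyle\sum_{j=\tau}^{t}y_j\Bigr)D^{-2}\mid\mathcal{W}}
\le O\bigl((t_0\log t)^{-2}\bigr)\,\Ex{y'+y_\tau+\textstyle\sum_{j=\tau+1}^{t}Z_j}
= O\!\left(\frac{t}{t_0^{2}\log t}\right)
\]
goes through unchanged, and the rest of the argument is correct.
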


                    \begin{proof}
                        For a vector $\xvec\in  \lceil t \log\log{t}\rceil^{t-\tau}$ let $\mathcal{W}(\xvec)$ and $\mathcal{W}'(\xvec)$ denote the events $\{Z_1=y_1,\ldots,Z_\tau=y_\tau,Z_{\tau+1}=x_1,\ldots,Z_t=x_{t-\tau}\}$ and $\{Z_1=y_1,\ldots,Z_\tau=y',Z_{\tau+1}=x_1,\ldots,Z_t=x_{t-\tau}\}$.
                        Note that due to the independence of the $Z_i$ for any $\xvec\in [ t \log\log{t}]^{t-\tau}$ we have $\PPo{\mathcal{W}(\xvec)\mid \mathcal{W}}=\PPo{\mathcal{W}'(\xvec)\mid \mathcal{W}'}$.
                        Since $k\le t_0< i \le t$ and $\tau\neq i$ by Lemma~\ref{lem:condchange} we have
                        \begin{align*}
                            &\left|\PPo{d_{i}(t)=k \mid \mathcal{W}}-\PPo{d_{i}(t)=k \mid \mathcal{W}'}\right|\\
                            &=\left|\sum_{\xvec\in\lceil t\log\log{t}\rceil^{t-\tau}}\PPo{\mathcal{W}(\xvec)\mid \mathcal{W}}\left(\PPo{d_{i}(t)=k \mid \mathcal{W}(\xvec)}-\PPo{d_{i}(t)=k \mid \mathcal{W}'(\xvec)}\right)\right|\\ 
                            &\le \sum_{\xvec\in\lceil t\log\log{t}\rceil^{t-\tau}}\PPo{\mathcal{W}(\xvec)\mid \mathcal{W}} \frac{3k |y_\tau-y'|}{\sum_{j=1}^{\min\{i,\tau-1 \}}y_j+\sum_{j=1}^{i-\tau} x_j}\left(1+\frac{y'+y_\tau+\sum_{j=1}^{t-\tau} x_j}{\sum_{j=1}^{\min\{i,\tau-1 \}}y_j+\sum_{j=1}^{i-\tau} x_j}\right).
                        \end{align*}
                        First assume $\tau\le t_0$. Then 
                        \begin{align*}
                            &\sum_{\xvec\in\lceil t\log\log{t}\rceil^{t-\tau}}\PPo{\mathcal{W}(\xvec)\mid \mathcal{W}} \frac{3k |y_\tau-y'|}{\sum_{j=1}^{\min\{i,\tau-1 \}}y_j+\sum_{j=1}^{i-\tau}x_j}\left(1+\frac{y'+y_\tau+\sum_{j=1}^{t-\tau} x_j}{\sum_{j=1}^{\min\{i,\tau-1 \}}y_j+\sum_{j=1}^{i-\tau}x_j}\right)\\
                            &\le \sum_{\xvec\in\lceil t\log\log{t}\rceil^{t-\tau}}\PPo{\mathcal{W}(\xvec)\mid \mathcal{W}} \frac{3k |y_\tau-y'|}{\sum_{j=1}^{i-\tau}x_j} \left(1+\frac{y'+y_\tau+\sum_{j=1}^{i-\tau} x_j + \sum_{j=i-\tau+1}^{t-\tau}x_j}{\sum_{j=1}^{i-\tau}x_j}\right)\\
                            & = 3k |y_\tau-y'|\sum_{\xvec\in\lceil t\log\log{t}\rceil^{t-\tau}}\PPo{\mathcal{W}(\xvec)\mid \mathcal{W}} \left( \frac{2}{\sum_{j=1}^{i-\tau}x_j}+ \frac{y'+y_\tau+ \sum_{j=i-\tau+1}^{t-\tau}x_j}{(\sum_{j=1}^{i-\tau}x_j)^2}\right)\\
                            &\le 3k |y_\tau-y'| \Ex{\frac{2}{\sum_{j=\tau+1}^{i}Z_j}+ \frac{y'+y_\tau+ \sum_{j=i+1}^{t}Z_j}{(\sum_{j=\tau+1}^{i}Z_j)^2}}\\
                            &\le \frac{3k |y_\tau-y'|}{(1+o(1))\beta'' t_0 \log{t}}\left(2+O\left(\frac{t\log t}{t_0 \log t}\right)\right)=k |y_\tau-y'| O\left(\frac{t}{t_0^2 \log {t}}\right),
                        \end{align*}
                        where in the last inequality we used  $i-\tau\ge t_0$ in order to apply Lemma~\ref{lem:Zmoments} and \eqref{eq:firstmomZ}. Further, note that the indices of the $Z_j$ in the numerator differ from the indices of the $Z_j$ in the denominator implying that they are independent.
                        
                        On the other hand if $\tau>t_0$ we have, due to our condition $|\sum_{j=1}^{\min\{i,\tau-1 \}}y_j-\Ex{L(\tau-1)}|\le t \log^{2/3}t$ and by \eqref{eq:edgconc} of Lemma \ref{treotto}, that 
                        $\sum_{j=1}^{\min\{i,\tau-1 \}}y_j \ge (1+o(1))\beta''t_0 \log{t}$.
                        Thus,
                        \begin{align*}
                            &\sum_{\xvec\in\lceil t\log\log{t}\rceil^{t-\tau}}\PPo{\mathcal{W}(\xvec)\mid \mathcal{W}} \frac{3k |y_\tau-y'|}{\sum_{j=1}^{\min\{i,\tau-1 \}}y_j+\sum_{j=1}^{i-\tau}x_j}\left(1+\frac{y'+y_\tau+\sum_{j=1}^{t-\tau} x_j}{\sum_{j=1}^{\min\{i,\tau-1 \}}y_j+\sum_{j=1}^{i-\tau}x_j}\right)\\
                            &\le \sum_{\xvec\in\lceil t\log\log{t}\rceil^{t-\tau}}\PPo{\mathcal{W}(\xvec)\mid \mathcal{W}} \frac{3k |y_\tau-y'|}{\sum_{j=1}^{\min\{i,\tau-1 \}}y_j}\left(1+\frac{y'+y_\tau+\sum_{j=1}^{t-\tau} x_j}{\sum_{j=1}^{\min\{i,\tau-1 \}}y_j}\right)\\
                            &\le \frac{3k |y_\tau-y'|}{\sum_{\substack{j=1}}^{\min\{i,\tau-1 \}}y_j}\left(1+O\left(\frac{t\log t}{t_0 \log t}\right)\right)= k |y_\tau-y'| O\left(\frac{t}{t_0^2 \log {t}}\right).
                        \end{align*}
                    \end{proof}

                    Recall that
                    $$
                        \numdeg_{k}(t)=\sum_{i=0}^t 1_{d_{i}(t)=k}.
                    $$
                    Instead of considering $\numdeg_{k}(t)$ we will consider a closely related random variable 
                    $$
                        \numdeg_k'(t)=\sum_{i=2t_0}^t 1_{d_{i}(t)=k},
                    $$
                    Clearly $\numdeg_k(t)=\numdeg_k'(t)+o(t)$.

                    We will use a martingale argument to show that $R_k'(t)$ is concentrated around its expectation, namely we inspect the martingale
                    $A_k(\tau) = G(\tau \wedge H)$ where
                    $G(\tau)= \Ex{\numdeg_k'(t)\mid Z_1,\ldots, Z_{\tau}}$ and $H = \min \{ s \ge 0 \colon |\bar{Z}(s)| > t\log^{2/3}t \}$, with $\bar{Z}(s) =\sum_{j=1}^{s}\left(Z_j-\Ex{Z}\right)$.
                    In the following lemma we prove an upper bound for the conditional variance of this martingale.

                    \begin{lemma}\label{lem:variance}
                        For every $1\le k \le t_0$ and for every $1\le \tau \le t$, we have
                        $$
                            |A_k(\tau)-A_k(\tau-1)|=O(t \log^{-1/2}t)
                        $$
                        and
                        $$
                            \Var{A_k(\tau)\mid Z_1,\ldots,Z_{\tau-1}}=O(t \log^{-1/2}t)
                        $$
                        almost surely.
                    \end{lemma}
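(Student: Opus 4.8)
The plan is to represent the increments of the Doob martingale $G(\tau)=\Ex{\numdeg_k'(t)\mid Z_1,\dots,Z_\tau}$ by a resampling (independent‑copy) argument and to control them termwise with the bounded‑difference estimate of Lemma~\ref{lem:change}. All quantities refer to the truncated model, so $Z_j\le t\log\log t+1$ for every $j$. First I would remove the stopping. Since $\bar Z(s)=\sum_{j=1}^s(Z_j-\Ex{Z})=L(s)-\Ex{L(s)}$ and $\bar Z(0)=0$, we have $H\ge 1$ and $\{H\le\tau-1\}\in\mathcal F_{\tau-1}:=\sigma(Z_1,\dots,Z_{\tau-1})$; on that event $\tau\wedge H=(\tau-1)\wedge H=H$, so $A_k(\tau)=A_k(\tau-1)$ and both quantities in the statement are $0$. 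Hence it suffices to argue on $\{H\ge\tau\}$, where $A_k(\tau)-A_k(\tau-1)=G(\tau)-G(\tau-1)$ and, by the definition of $H$, $|L(s)-\Ex{L(s)}|\le t\log^{2/3}t$ for every $1\le s\le\tau-1$ — which is exactly the partial‑sum hypothesis needed to invoke Lemma~\ref{lem:change}. Since $A_k$ is a stopped martingale, $\Var{A_k(\tau)\mid\mathcal F_{\tau-1}}=\Ex{(A_k(\tau)-A_k(\tau-1))^2\mid\mathcal F_{\tau-1}}$, which on $\{H\ge\tau\}$ equals $\Var{G(\tau)\mid\mathcal F_{\tau-1}}$.

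Next, write $G(\tau)=g(Z_1,\dots,Z_\tau)$ with $g(z_1,\dots,z_\tau)=\sum_{i=2t_0}^{t}\PPo{d_i(t)=k\mid Z_1=z_1,\dots,Z_\tau=z_\tau}$, abbreviate $g(\dots,z):=g(Z_1,\dots,Z_{\tau-1},z)$, let $Z'$ be an independent copy of $Z$, and let $\mathbb E_{Z'}$ denote expectation over $Z'$ only. Then $G(\tau-1)=\mathbb E_{Z'}[\,g(\dots,Z')\,]$, so $G(\tau)-G(\tau-1)=\mathbb E_{Z'}[\,g(\dots,Z_\tau)-g(\dots,Z')\,]$ and, by Jensen, $\Var{G(\tau)\mid\mathcal F_{\tau-1}}\le\mathbb E[(g(\dots,Z_\tau)-g(\dots,Z'))^2]$, where in this last expectation \emph{both} $Z_\tau$ and $Z'$ are averaged (both being independent of $\mathcal F_{\tau-1}$). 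For each index $i$ with $2t_0\le i\le t$ and $i\ne\tau$, Lemma~\ref{lem:change} applies — we have $1\le k\le t_0<i\le t$, and on $\{H\ge\tau\}$ its partial‑sum hypothesis holds with either $Z_\tau$ or $Z'$ in the role of $y_\tau$ — and gives
\[
    \bigl|\PPo{d_i(t)=k\mid Z_1,\dots,Z_\tau}-\PPo{d_i(t)=k\mid Z_1,\dots,Z_{\tau-1},Z'}\bigr|\le k\,|Z_\tau-Z'|\cdot O\!\left(\frac{t}{t_0^{2}\log t}\right),
\]
whereas the lone ``diagonal'' index $i=\tau$ (present only when $2t_0\le\tau\le t$) is bounded trivially by $1$. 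Summing over the at most $t$ indices and using $t/t_0=\log\log t$ yields, on $\{H\ge\tau\}$,
\[
    \bigl|g(\dots,Z_\tau)-g(\dots,Z')\bigr|\;\le\; k\,|Z_\tau-Z'|\cdot O\!\left(\frac{(\log\log t)^{2}}{\log t}\right)+1 .
\]

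It then remains to insert the moments of $Z$. For the increment bound, the deterministic inequality $|Z_\tau-Z'|\le t\log\log t+1$ gives $|G(\tau)-G(\tau-1)|=O\!\left(k\,t(\log\log t)^{3}/\log t\right)$, which for fixed $k$ is $o(t\log^{-1/2}t)$. For the conditional variance, square the last display, use $(a+b)^2\le 2a^2+2b^2$, and apply $\mathbb E[(Z_\tau-Z')^2]=2\Var{Z}\le 2\Ex{Z^2}=(1+o(1))\,2\beta''t\log\log t$ from \eqref{eq:secmomZ}; this gives $\Var{G(\tau)\mid\mathcal F_{\tau-1}}=O\!\left(k^{2}\,t(\log\log t)^{5}/\log^{2}t\right)$, again $o(t\log^{-1/2}t)$ for fixed $k$. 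On $\{H\ge\tau\}$ these bounds hold uniformly in the observed values $Z_1,\dots,Z_{\tau-1}$, and on $\{H\le\tau-1\}$ both quantities are $0$; hence they hold almost surely, as claimed.

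The step I expect to be the main obstacle is the one in the second paragraph: one must verify that the hypotheses of Lemma~\ref{lem:change} are genuinely available along the \emph{stopped} process (this is exactly why $H$ is defined through the deviations $|\bar Z(s)|$), handle separately the single index $i=\tau$ to which Lemma~\ref{lem:change} does not apply, and — for the variance estimate — avoid bounding $|Z_\tau-Z'|$ pointwise (which would be off by a factor of order $t$): instead one averages $(Z_\tau-Z')^2$ over \emph{both} independent copies, which is legitimate precisely because the conditioning in the lemma is only on $Z_1,\dots,Z_{\tau-1}$. Everything else is the routine substitution of $t_0=t/\log\log t$ and \eqref{eq:secmomZ}.
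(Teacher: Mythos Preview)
Your proposal is correct and follows essentially the same route as the paper: both split off the stopped event (where the increment vanishes), represent $G(\tau)-G(\tau-1)$ via an independent copy of $Z_\tau$, apply Lemma~\ref{lem:change} termwise over $2t_0\le i\le t$ with the diagonal index $i=\tau$ bounded trivially by $1$, and then insert the maximum of $Z$ for the increment bound. The one cosmetic difference is the variance step: the paper uses $\Var\le \max|\Delta|\cdot \Ex{|\Delta|}$ (max of $Z_\tau$ in one factor, $\Ex{Z}$ in the other), whereas you square directly and use $\Ex{(Z_\tau-Z')^2}\le 2\Ex{Z^2}$ from \eqref{eq:secmomZ}; your bound is in fact slightly sharper but both are routine, and both proofs, like the paper's, effectively treat $k$ as fixed when absorbing it into the $O(\cdot)$.
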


                    \begin{proof}
                        Fix $\tau$. In order to arrive to the result we will show that for every $z_1,\ldots,z_{\tau-1}$ between $1$ and $t\log\log t+1$, 
                        conditional on $Z_1=z_1,\ldots,Z_{\tau-1}=z_{\tau-1}$, we have $|A_k(\tau)-A_k(\tau-1)|=O(t \log^{-1/2}t)$ and
                        $\Var{A_k(\tau)\mid Z_1=z_1,\ldots,Z_{\tau-1}=z_{\tau-1}}=O(t \log^{-1/2}t)$.
                        Again let $M(\tau) = \bar{Z}(\tau \wedge H)$ and define $\mathcal{A}$ as the maximal set, such that for every $\omega \in \mathcal{A}$ we have $(Z_1,\ldots, Z_{\tau-1})(\omega)=(z_1,\ldots,z_{\tau-1})$.
                        For $\omega \in \mathcal{A}$ satisfying $|M(\tau-1)(\omega)|> t\log^{2/3}{t}$ we have $A_k(\tau)(\omega)=A_k(\tau-1)(\omega)$ and $\Var{A_k(\tau) \mid Z_1,\ldots,Z_{\tau-1}}(\omega)=0$.
                        Hence, in the following we will consider the set $\mathcal{B}$ consisting of all $\omega \in \mathcal{A}$
                        such that
                        $|M(\tau-1)(\omega)|\le t\log^{2/3}{t}$. Therefore, $\omega\in \mathcal{C}_{\tau-1}$ and by Lemma~\ref{treotto} for any $2t_0\le i <\tau$,
                        $$
                            \sum_{j=1}^{i}z_{j}\ge (1+o(1))\beta'' i \log{t}-t\log\log{t}-1\ge (1+o(1))\beta'' t_0 \log{t}.
                        $$
	
                        Since $A_k(\tau)$ is a martingale we have, for each $\omega \in \mathcal{B}$, $\Ex{A_k(\tau) \mid Z_1,\ldots,Z_{\tau-1}}(\omega)=A_k(\tau-1)(\omega)$ and thus
                        \begin{align}
                            &\Var  {A_k(\tau) \mid  Z_1,\ldots,Z_{\tau-1}}(\omega)=
                            \Ex{ \left(A_k(\tau)-A_k(\tau-1)\right)^2 \mid Z_1,\ldots,Z_{\tau-1}}(\omega)\nonumber\\
                            &\quad \le \max \left[|A_k(\tau)(\omega)-A_k(\tau-1)(\omega)| \right]\cdot \Ex{|A_k(\tau)- A_k(\tau-1)| \mid Z_1,\ldots,Z_{\tau-1}}(\omega). \label{eq:var}
                        \end{align}
                        Now,
                        \begin{align}
                            & |A_k(\tau)(\omega)-A_k(\tau-1)(\omega)|=
                            \left|\Ex{\numdeg'_k(t)\mid Z_1,\ldots,Z_\tau}(\omega)- \Ex{\numdeg'_k(t) \mid Z_1,\ldots,Z_{\tau-1}}(\omega)\right|\nonumber\\
                            &\le \sum_{i=2t_0}^t \left|\Ex{1_{d_{i}(t)=k}\mid Z_1,\ldots,Z_\tau}(\omega)- \Ex{1_{d_{i}(t)=k} \mid Z_1,\ldots,Z_{\tau-1}}(\omega)\right|\nonumber\\
                            &\le \sum_{\substack{i=2t_0\\i\neq \tau}}^t \left|\Ex{1_{d_{i}(t)=k}\mid Z_1,\ldots,Z_{\tau}}(\omega)- \Ex{1_{d_{i}(t)=k}\mid Z_1,\ldots,Z_{\tau-1}}(\omega)\right|+1\nonumber\\
                            &=\sum_{\substack{i=2t_0\\i\neq \tau}}^t \left|\PPo{d_{i}(t)=k\mid Z_1,\ldots,Z_{\tau}}(\omega)- \PPo{d_{i}(t)=k\mid Z_1,\ldots,Z_{\tau-1}}(\omega)\right|+1.\label{eq:Adiff}
                        \end{align}
	
                        Let $\mathbb{P}_{Z_\tau=\ell}$ be the conditional probability given the event $\{Z_{\tau}=\ell\}$. 
                        Then for a fixed $2t_0 \le i \le t$ such that $i\neq \tau$ we have
                        \begin{align*}
                            &\left|\PPo{d_{i}(t)=k\mid Z_1,\ldots,Z_{\tau}}(\omega)- \PPo{d_{i}(t)=k\mid Z_1,\ldots,Z_{\tau-1}}(\omega)\right|\\
                            &\leq \sum_{\ell:1\le \ell < t\log\log{t}+1}\PPo{Z_\tau=\ell}\left|\PPo{d_{i}(t)=k\mid Z_1,\ldots,Z_{\tau}}(\omega)- \PPol{d_{i}(t)=k\mid Z_1,\ldots,Z_{\tau-1}}(\omega)\right|.
                        \end{align*}
                        Then by Lemma~\ref{lem:change}
                        \begin{align*}
                            & \sum_{\ell:1\le \ell < t\log\log{t}+1}\PPo{Z_\tau=\ell}\left|\PPo{d_{i}(t)=k\mid Z_1,\ldots,Z_{\tau}}(\omega)- \PPol{d_{i}(t)=k\mid Z_1,\ldots,Z_{\tau-1}}(\omega)\right|\\
                            & \le \sum_{\ell:1\le \ell < t\log\log{t}+1}\PPo{Z_\tau=\ell} k |Z_\tau-\ell| O\left(\frac{t}{ t_0^2 \log {t}}\right)\\
                            &\le \sum_{\ell:1\le \ell < t\log\log{t}+1}\PPo{Z_\tau=\ell} k (Z_\tau+\ell)O\left(\frac{t}{ t_0^2 \log {t}}\right)\\
                            &= O\left(\frac{t}{t_0^2}\left(\frac{Z_\tau}{\log{t}}+1\right)\right),
                        \end{align*}
                        where we used \eqref{eq:firstmomZ}.

                        Returning to $\eqref{eq:Adiff}$ we deduce 
                        $$
                            |A_k(\tau)(\omega)-A_k(\tau-1)(\omega)|\le O\left(\frac{t^2}{t_0^2}\left(\frac{Z_\tau}{\log{t}}+1\right)\right).
                        $$
                        Replacing $Z_{\tau}$ by its maximum gives us for every $\omega \in \mathcal{B}$
                        $$
                            |A_k(\tau)(\omega)-A_k(\tau-1)(\omega)|=O(t \log^{-1/2}t).
                        $$
                        On the other hand replacing $Z_{\tau}$ by its maximum and its expected value respectively together with \eqref{eq:var} implies for every $\omega \in \mathcal{B}$ that
                        $$
                            \Var  {A_k(\tau) \mid  Z_1,\ldots,Z_{\tau-1}}(\omega)=O\left(\frac{ t^5 \log\log{t}}{t_0^4 \log{t}}\right)=O(t \log^{-1/2}t).
                        $$
                    \end{proof}

                    Using the previous lemma we can show that the martingale $A_k(\tau)$ is concentrated around its expectation.

                    \begin{lemma}\label{lem:marconc1}
                        For any fixed integer $1\le k \le t_0$ we have  that under $\mathcal{C}$
                        the event $\{\Ex{\numdeg_k(t)\mid Z_1,\ldots, Z_t}=\Ex{\numdeg_k(t)}+o(t)\}$ holds whp.
                    \end{lemma}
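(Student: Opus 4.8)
The plan is to apply the Azuma--Hoeffding inequality of Theorem~\ref{thm:marconc} to the martingale $(A_k(\tau))_{\tau=0}^{t}$ just introduced, feeding in the increment and conditional-variance bounds of Lemma~\ref{lem:variance}, and then to unpack the resulting concentration of $A_k(t)$ into the stated claim about $\Ex{\numdeg_k(t)\mid Z_1,\ldots,Z_t}$. First I would observe that $A_k(0)=G(0)=\Ex{\numdeg_k'(t)}$ is a deterministic constant, so $W_\tau:=A_k(\tau)-\Ex{\numdeg_k'(t)}$ is a martingale with $W_0=0$, adapted to the filtration generated by $Z_1,\ldots,Z_\tau$, and $\Ex{A_k(t)}=\Ex{\numdeg_k'(t)}$.

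By Lemma~\ref{lem:variance} (whose hypothesis $1\le k\le t_0$ we are assuming) we may take $b=O(t\log^{-1/2}t)$ and $\sigma^2=O(t\log^{-1/2}t)$ in Theorem~\ref{thm:marconc}. Choosing $\lambda=t\log^{-1/8}t$, the denominator $t\sigma^2+b\lambda/3$ is $O(t^2\log^{-1/2}t)$ (the term $b\lambda/3=O(t^2\log^{-5/8}t)$ being of smaller order), so the exponent $\lambda^2/(2(t\sigma^2+b\lambda/3))$ is $\Omega(\log^{1/4}t)$ and hence
$$
    \PPo{\left|A_k(t)-\Ex{\numdeg_k'(t)}\right|\ge t\log^{-1/8}t}\le 2\exp\left(-\Omega(\log^{1/4}t)\right)=o(1);
$$
that is, $A_k(t)=\Ex{\numdeg_k'(t)}+o(t)$ with probability $1-o(1)$.

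Next, on the event $\mathcal{C}$ we have $H>t$ (indeed $\mathcal{C}=\{H>t\}$, since $\bar{Z}(\tau)=L(\tau)-\Ex{L(\tau)}$ and $\bar{Z}(0)=0$), so $\tau\wedge H=\tau$ for all $\tau\le t$ and therefore $A_k(t)=G(t)=\Ex{\numdeg_k'(t)\mid Z_1,\ldots,Z_t}$. Moreover $0\le \numdeg_k(t)-\numdeg_k'(t)=\sum_{i=0}^{2t_0-1}1_{d_i(t)=k}\le 2t_0=o(t)$ deterministically, which yields both $\Ex{\numdeg_k(t)}=\Ex{\numdeg_k'(t)}+o(t)$ and $\Ex{\numdeg_k(t)\mid Z_1,\ldots,Z_t}=\Ex{\numdeg_k'(t)\mid Z_1,\ldots,Z_t}+o(t)$. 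Combining these identities with the concentration above, on the event $\mathcal{C}\cap\{|A_k(t)-\Ex{\numdeg_k'(t)}|<t\log^{-1/8}t\}$, which has probability $\PPo{\mathcal{C}}-o(1)=1-o(1)$, we get
$$
    \Ex{\numdeg_k(t)\mid Z_1,\ldots,Z_t}=A_k(t)+o(t)=\Ex{\numdeg_k'(t)}+o(t)=\Ex{\numdeg_k(t)}+o(t),
$$
as required. The substantive work has already been absorbed into Lemma~\ref{lem:variance}; the only delicate point remaining is to calibrate $\lambda$ so that it is simultaneously $o(t)$ and large enough for the Azuma exponent to diverge, and I do not expect any genuine obstacle beyond this and the routine bookkeeping of the $o(t)$ discrepancies between $\numdeg_k$ and $\numdeg_k'$ and between $\mathcal{C}$ and $\{H>t\}$.
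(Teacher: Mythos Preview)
Your proposal is correct and follows essentially the same route as the paper: apply Theorem~\ref{thm:marconc} to $A_k(\tau)$ with $b,\sigma^2=O(t\log^{-1/2}t)$ from Lemma~\ref{lem:variance} and $\lambda=t\log^{-1/8}t$, then on $\mathcal{C}$ identify $A_k(t)$ with the conditional expectation and absorb the $O(t_0)$ difference between $\numdeg_k$ and $\numdeg_k'$. If anything you are slightly more careful than the paper, which writes ``$A_k(t)=\numdeg'_k(t)$'' on $\mathcal{C}$ where it should read $A_k(t)=\Ex{\numdeg_k'(t)\mid Z_1,\ldots,Z_t}$, and which argues $\Ex{A_k(t)}=\Ex{\numdeg_k'(t)}+o(t)$ rather than your cleaner observation that this holds exactly via $A_k(0)=\Ex{\numdeg_k'(t)}$.
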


                    \begin{proof}
                        By Theorem~\ref{thm:marconc} and Lemma~\ref{lem:variance} we have 
                        \begin{align}
                            \PPo{|A_k(t)-\Ex{A_k(t)}|\ge t\log^{-1/8}t}&\le 2\exp\left(-\frac{t^2 \log^{-1/4}t}{O(t^2 \log^{-1/2}t)+t \log^{-1/8}t O(t\log^{-1/2}t)} \right)\nonumber\\
                            &\le 2\exp\left(-\Omega(\log^{1/4}t)\right)=o(1). \label{eq:Akconc}
                        \end{align}
                        When $\mathcal{C}$ holds, which occurs whp by Lemma \ref{lem:edgeconc}, we have $A_k(t)=\numdeg'_{k}(t)$ and since $A_{k}(t),\numdeg'_k(t)\le t$ we also have $\Ex{A_k(t)}=\Ex{\numdeg'_k(t)}+o(t)$. 
                        Together with \eqref{eq:Akconc} we deduce that whp $\numdeg'_k(t)=\Ex{R_k'(t)}+o(t)$.
                        The result follows as $\numdeg_k(t)=\numdeg'_k(t)+o(t)$.
                    \end{proof}

                    Now we only have to consider what happens once the value of $Z_1,\ldots,Z_t$ have been established. This resembles the proof showing that the degree sequence in the classical preferential attachment model is concentrated.

                    \begin{lemma}\label{lem:marconc2}
                        For $y_1,y_2,\ldots,y_t$, let $\mathcal{Y}$ denote the event $\{Z_1=y_1,\ldots,Z_t=y_t\}$. If $\mathcal{Y}\subseteq \mathcal{C}$ we have 
                        $$
                            \PPo{|\numdeg_k(t)-\Ex{\numdeg_k(t)\mid \mathcal{Y}}|\ge \sqrt{t} \log{t}\mid \mathcal{Y}}=o(1/t).
                        $$
                    \end{lemma}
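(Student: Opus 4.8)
The plan is to condition on $\mathcal{Y}$ throughout and to exploit, via $\mathcal{Y}\subseteq\mathcal{C}$ and Lemma~\ref{treotto}, that $L(\tau)=(1+o(1))\beta''\tau\log t$ for every $t_0\le\tau\le t$ and in particular that the total number of edges $N:=L(t)=\sum_{i=1}^t y_i=(1+o(1))\beta'' t\log t$ is \emph{deterministic} once we condition on $\mathcal{Y}$. Conditionally on $\mathcal{Y}$ the model is a preferential attachment process in which step $\tau$ inserts $y_\tau$ edges, each independently joining $v_\tau$ to $v_i$ with probability $d_i(\tau-1)/(2L(\tau-1))$; I would split every step into its individual rounds as in the proof of Lemma~\ref{lem:condchange}, obtaining $N$ rounds in total. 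Let $\mathcal{F}_r$ be generated by $\mathcal{Y}$ together with the endpoints chosen in the first $r$ rounds and set $W_r=\Ex{\numdeg_k(t)\mid\mathcal{F}_r}-\Ex{\numdeg_k(t)\mid\mathcal{Y}}$, so that $(W_r)_{r=0}^N$ is a martingale with $W_0=0$ and $W_N=\numdeg_k(t)-\Ex{\numdeg_k(t)\mid\mathcal{Y}}$; the statement would then follow from Theorem~\ref{thm:marconc} once we control the jumps and the conditional variances.

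The first ingredient is the bounded-difference estimate $|W_r-W_{r-1}|=O(k)$, valid for every round $r$. Since conditionally on $\mathcal{F}_{r-1}$ the endpoint of round $r$ is one of the vertices present, it is enough to bound $\bigl|\Ex{\numdeg_k(t)\mid\mathcal{F}_{r-1},\text{endpoint }v_i}-\Ex{\numdeg_k(t)\mid\mathcal{F}_{r-1},\text{endpoint }v_{i'}}\bigr|$ for admissible $i,i'$; one obtains this by coupling the two continuations so that the two evolving multigraphs stay within bounded Hamming distance in the degree sequence, so that re-routing a single edge alters the number of degree-$k$ vertices by only $O(k)$. This is the analogue, at the level of a single inserted edge, of Lemma~\ref{lem:condchange} (which handles changing one of the $Z_j$'s), and it is the point at which the argument mirrors the classical proof of concentration of the degree sequence in preferential attachment. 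I expect this coupling — and, more precisely, making it tight enough that the induced conditional variances come out genuinely small — to be the main obstacle.

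For the variances I would distinguish the rounds belonging to steps $\tau\le t_0$ from the rest. There are only $M_0:=L(t_0)=O(t\log t/\log\log t)$ rounds of the first type, and for them I would just use $\Var{W_r\mid\mathcal{F}_{r-1}}\le\|W_r-W_{r-1}\|_\infty^2=O(k^2)$. For a round $r$ in a step $\tau>t_0$ the graph already has $L(\tau-1)=\Omega(t\log t/\log\log t)$ edges, so the probability that its endpoint is a vertex of current degree at most $k$ — the only vertices whose crossing of the threshold affects $\numdeg_k$ directly — is $O(k\tau/L(\tau-1))=o(1)$, while on the complementary event the conditional expectation of $\numdeg_k(t)$ changes only through a negligible reshuffling of later attachment probabilities; bounding $\Var{W_r\mid\mathcal{F}_{r-1}}\le\|W_r-W_{r-1}\|_\infty\,\Ex{|W_r-W_{r-1}|\mid\mathcal{F}_{r-1}}$ then gives $\Var{W_r\mid\mathcal{F}_{r-1}}=o(1)$ uniformly over such $r$. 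Finally I would write $W_N=W_{M_0}+(W_N-W_{M_0})$ and apply Theorem~\ref{thm:marconc} twice: to $(W_r)_{r=0}^{M_0}$ with $b=O(k)$, $\sigma^2=O(k^2)$ and $M_0=O(t\log t/\log\log t)$ steps, and to $(W_r)_{r=M_0}^{N}$ (conditionally on $\mathcal{F}_{M_0}$, uniformly over its realisations) with $b=O(k)$, $\sigma^2=o(1)$ and at most $N=O(t\log t)$ steps. Taking $\lambda=\tfrac12\sqrt t\log t$ in each application, the first bound is $2\Expon{-\Omega(\log t\log\log t)}=o(1/t)$ and the second is $2\Expon{-\omega(\log t)}=o(1/t)$; combining them through $|W_N|\le|W_{M_0}|+|W_N-W_{M_0}|$ yields $\PPo{|\numdeg_k(t)-\Ex{\numdeg_k(t)\mid\mathcal{Y}}|\ge\sqrt t\log t\mid\mathcal{Y}}=o(1/t)$, as claimed.
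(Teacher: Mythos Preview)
Your overall architecture --- condition on $\mathcal{Y}$, split each step into single-edge rounds, form the Doob martingale $W_r=\Ex{\numdeg_k(t)\mid\mathcal{F}_r}-\Ex{\numdeg_k(t)\mid\mathcal{Y}}$, and feed bounded differences (and, in your case, conditional variances) into Theorem~\ref{thm:marconc} --- is exactly the paper's route. The paper is, however, considerably shorter at the key step: it observes that once $\mathcal{Y}$ is fixed the normaliser $2L(\tau)$ is deterministic, so for every vertex $v_i$ the conditional law of $d_i(t)$ given $\mathcal{F}_r$ depends \emph{only} on $v_i$'s own degree at round $r$. Revealing the endpoint of one round can therefore change $\PPo{d_i(t)=k\mid\mathcal{F}_r}$ for at most two indices $i$, giving $|W_r-W_{r-1}|\le 2$ with no coupling whatsoever; the paper then applies Azuma--Hoeffding directly to all $N=O(t\log t)$ rounds. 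Your sketch of a coupling that ``keeps the two evolving multigraphs within bounded Hamming distance in the degree sequence'' is both harder to make precise and unnecessary here; the independence of the marginal degree evolutions under $\mathcal{Y}$ is the decisive simplification you are missing.

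Your two-phase variance refinement, on the other hand, is sound --- and arguably sharpens the paper. A naive Azuma bound with differences $\le 2$ over $N=(1+o(1))\beta'' t\log t$ rounds only yields $\exp(-\Theta(\log t))=t^{-c}$ for a constant $c$ that need not exceed $1$, so getting the stated $o(1/t)$ is delicate. Your observation that for rounds in steps $\tau>t_0$ the chosen endpoint has degree $\le k$ with probability $O(k\tau/L(\tau-1))=O(1/\log t)$, combined with the fact (which you gesture at but do not state cleanly) that selecting a vertex of degree $>k$ leaves \emph{every} $\PPo{d_i(t)=k\mid\mathcal{F}_r}$ unchanged, gives $\Var{W_r\mid\mathcal{F}_{r-1}}=O(1/\log t)$ for late rounds and genuinely secures the $o(1/t)$ rate. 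In short: same approach, one step you overcomplicate, one step you make more careful.
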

                    
                    \begin{proof}
                        We will break up every step into rounds, where in each round exactly one edge is inserted. Therefore step $\tau$ consists of $y_\tau$ rounds. In round $(j,\ell)$ we insert the $\ell$-th edge in the $j$-th step. Denote by $(j,\ell)-1$ the round preceding the $(j,\ell)$-th round, that is
                        $$
                            (j,\ell)-1=
                            \left\{
                            \begin{array} {ll}
                                (j,\ell-1), &  \mbox{if } \ell>1,\\
                                (j-1,y_{j-1}), & \mbox{if }  \ell=1 \mbox{ and } j>1,\\
                                (0,0), & \mbox{if } (j,\ell)=(1,1).
                            \end{array}
                            \right.
                        $$

                        Let $\mathcal{F}_{(j,\ell)}$ be the filtration formed by adding the edges one at a time, with $\mathcal{F}_{(0,0)}$ the trivial sigma algebra. Fix $1\le j \le t$ and $1\le \ell \le y_j$ and run a second copy of the truncated model, such that it matches the original run, for every round before round $(j,\ell)$ and afterwards runs independently. Denote by $d_{i}(\tau)$ the degree of vertex $v_i$ after $\tau$ steps and by $d_{i}'(\tau)$ the degree of $v_i$ after $\tau$ steps in the second copy.
                        Clearly,
                        $$
                            \Ex{\numdeg_k(t)\mid \mathcal{F}_{(j,\ell)}}=\sum_{i=1}^t \PPo{d_{i}(t)=k \mid \mathcal{F}_{(j,\ell)}},
                        $$
                        and since the vertex degrees are equidistributed and due to the two copies being independent after the first $(j,\ell)-1$ rounds we have
                        $$
                            \Ex{\numdeg_k(t) \mid \mathcal{F}_{(j,\ell)-1}}=\sum_{i=1}^t\PPo{d'_{i}(t)=k\mid \mathcal{F}_{(j,\ell)-1}}=\sum_{i=1}^t \PPo{d'_{i}(t)=k\mid \mathcal{F}_{(j,\ell)}}.
                        $$
                        Therefore
                        $$
                            \Ex{\numdeg_k(t)\mid \mathcal{F}_{(j,\ell)}}-\Ex{\numdeg_k(t) \mid \mathcal{F}_{(j,\ell)-1}}=\sum_{i=1}^t \left( \PPo{d_{i}(t)=k \mid \mathcal{F}_{(j,\ell)}}-\PPo{d'_{i}(t)=k\mid \mathcal{F}_{(j,\ell)}} \right).
                        $$

                        Note that conditional on $\mathcal{F}_{(j,\ell)}$ the degree of a vertex after round $(j,\ell)$ depends on $\mathcal{F}_{(j,\ell)}$, only through  the degree of the vertex at the end of round $(j,\ell)-1$ and the degree of the vertex until round $(j,\ell)$. Denote by $d_i((j,\ell))$ and $d_i'((j,\ell))$ the degree of vertex $v_i$ at the end of round $(j,\ell)$ in the first and second copy respectively. Then 
                        \begin{align*}
                            &\Ex{\numdeg_k(t)\mid \mathcal{F}_{(j,\ell)}}-\Ex{\numdeg_k(t) \mid \mathcal{F}_{(j,\ell)-1}}\\
                            &=\sum_{i=1}^t \mathbb{E}\biggl[\PPo{d_{i}(t)=k \mid d_{i}((j,\ell)-1),d_{i}((j,\ell))}-\PPo{d'_{i}(t)=k\mid d'_{i}((j,\ell)-1),d'_{i}((j,\ell))}\biggr| \mathcal{F}_{(j,\ell)}\biggr].
                        \end{align*}

                        Note that $d_{i}((j,\ell)-1)=d_{i}'((j,\ell)-1)$, 
                        thus 
                        $$
                            \PPo{d_{i}(t)=k \mid d_{i}((j,\ell)-1),d_{i}((j,\ell))}=\PPo{d'_{i}(t)=k\mid d'_{i}((j,\ell)-1),d'_{i}((j,\ell))}
                        $$
                        whenever $d_{i}((j,\ell))=d'_{i}((j,\ell))$. 
                        Therefore    
                        \begin{equation}
                            |\Ex{\numdeg_k(t)\mid \mathcal{F}_{(j,\ell)}}-\Ex{\numdeg_k(t) \mid \mathcal{F}_{(j,\ell)-1}}|\le \Ex{\sum_{i=1}^t 1_{d_{i}((j,\ell))\neq d'_{i}((j,\ell))}}\le 2,
                        \end{equation}
                        as the degree of exactly 2 vertices is changed in each round. This allows us to apply the Azuma--Hoeffding inequality. For $(j,\ell) = (t, y_t)$ we obtain
                        $$
                            \PPo{|\numdeg_k(t)-\Ex{\numdeg_k(t)\mid \mathcal{Y}}|\ge \sqrt{t}\log t \mid \mathcal{Y}}\le \exp\left(-\frac{t\log^2{t}}{O(t\log t)}\right)=o(1/t)
                        $$
                        where we used that since $\mathcal{Y}\subseteq \mathcal{C}$ there are $O(t\log{t})$ rounds during the first $t$ steps.
                    \end{proof}

                \subsubsection{Proof of Theorem~\ref{thm:main2}}\label{th:ma2:pr}

                    Note that Lemma \ref{trecinque} allows us
                    to consider suitably truncated random variables for the initial degrees.
                    Fix $k\ge 1$ then by Lemmas~\ref{lem:marconc1} and \ref{lem:marconc2} we have conditional on $\mathcal{C}$ whp $\numdeg_{k}(t)=\Ex{\numdeg_{k}(t)}+o(t)$. The result follows as by Lemma~\ref{lem:edgeconc} $\mathcal{C}$ holds whp and Lemma~\ref{lem:expR} implies $\Ex{\numdeg_k(t)}=(t+1)b_k+o(t)$ and $\ratdeg_k(t)=\numdeg_{k}(t)/(t+1)$.

                    Note that if $k:=k(t)\to \infty$ as $t\to \infty$ Lemma~\ref{lem:expR} implies that $\Ex{\numdeg_k(t)}=o(t)$. The result follows from Markov's inequality and $\ratdeg_k(t)=\numdeg_{k}(t)/(t+1)$.

                \subsubsection*{Acknowledgments}

                    T. Makai was supported by ERC Grant Agreement 772606-PTRCSP.

                    F.\ Polito and L.\ Sacerdote have been partially supported by the MIUR-PRIN 2022 project ``Non-Markovian dynamics and non-local equations'', no.\ 202277N5H9 and by INdAM/GNAMPA. 

                    L.\ Sacerdote has been partially supported by
                    the Spoke 1 ``FutureHPC \&
                    BigData'' of ICSC - Centro Nazionale di Ricerca in
                    High-Performance-Computing, Big Data and Quantum Computing, funded by
                    European Union - NextGenerationEU.

    \bibliography{pref_ref}

\begin{thebibliography}{10}

\bibitem{MR2091634}
Albert-L\'{a}szl\'{o} Barab\'{a}si and R\'{e}ka Albert.
\newblock Emergence of scaling in random networks.
\newblock {\em Science}, 286(5439):509--512, 1999.

\bibitem{MR3161480}
Noam Berger, Christian Borgs, Jennifer~T. Chayes, and Amin Saberi.
\newblock Asymptotic behavior and distributional limits of preferential
  attachment graphs.
\newblock {\em Ann. Probab.}, 42(1):1--40, 2014.

\bibitem{bhamidi2007universal}
Shankar Bhamidi.
\newblock Universal techniques to analyze preferential attachment trees: Global
  and local analysis.
\newblock {\em Available from
  \texttt{www.academia.edu/download/30790314/10.1.1.120.5134.pdf}}, 2007.

\bibitem{MR1824277}
B\'{e}la Bollob\'{a}s, Oliver Riordan, Joel Spencer, and G\'{a}bor Tusn\'{a}dy.
\newblock The degree sequence of a scale-free random graph process.
\newblock {\em Random Structures Algorithms}, 18(3):279--290, 2001.

\bibitem{MR2283885}
Fan Chung and Linyuan Lu.
\newblock Concentration inequalities and martingale inequalities: a survey.
\newblock {\em Internet Math.}, 3(1):79--127, 2006.

\bibitem{MR1966545}
Colin Cooper and Alan Frieze.
\newblock A general model of web graphs.
\newblock {\em Random Structures Algorithms}, 22(3):311--335, 2003.

\bibitem{MR4142215}
Umberto De~Ambroggio, Federico Polito, and Laura Sacerdote.
\newblock On dynamic random graphs with degree homogenization via
  anti-preferential attachment probabilities.
\newblock {\em Phys. D}, 414:132689, 12, 2020.

\bibitem{MR2480915}
Maria Deijfen, Henri van~den Esker, Remco van~der Hofstad, and Gerard
  Hooghiemstra.
\newblock A preferential attachment model with random initial degrees.
\newblock {\em Ark. Mat.}, 47(1):41--72, 2009.

\bibitem{MR2511283}
Steffen Dereich and Peter M\"{o}rters.
\newblock Random networks with sublinear preferential attachment: degree
  evolutions.
\newblock {\em Electron. J. Probab.}, 14:no. 43, 1222--1267, 2009.

\bibitem{MR3912097}
Alan Frieze, Xavier P\'{e}rez-Gim\'{e}nez, Pawe\l Pra\l~at, and Benjamin
  Reiniger.
\newblock Perfect matchings and {H}amiltonian cycles in the preferential
  attachment model.
\newblock {\em Random Structures Algorithms}, 54(2):258--288, 2019.

\bibitem{MR4312838}
Svante Janson and Lutz Warnke.
\newblock Preferential attachment without vertex growth: emergence of the giant
  component.
\newblock {\em Ann. Appl. Probab.}, 31(4):1523--1547, 2021.

\bibitem{MR4522354}
Joost Jorritsma and J\'{u}lia Komj\'{a}thy.
\newblock Distance evolutions in growing preferential attachment graphs.
\newblock {\em Ann. Appl. Probab.}, 32(6):4356--4397, 2022.

\bibitem{MR3997484}
Tomasz \L~uczak, Abram Magner, and Wojciech Szpankowski.
\newblock Asymmetry and structural information in preferential attachment
  graphs.
\newblock {\em Random Structures Algorithms}, 55(3):696--718, 2019.

\bibitem{MR4193887}
Bas Lodewijks and Marcel Ortgiese.
\newblock A phase transition for preferential attachment models with additive
  fitness.
\newblock {\em Electron. J. Probab.}, 25:Paper No. 146, 54, 2020.

\bibitem{MR4195181}
Yury Malyshkin.
\newblock Sublinear preferential attachment combined with a growing number of
  choices.
\newblock {\em Electron. Commun. Probab.}, 25:Paper No. 87, 12, 2020.

\bibitem{MR3776186}
Angelica Pachon, Laura Sacerdote, and Shuyi Yang.
\newblock Scale-free behavior of networks with the copresence of preferential
  and uniform attachment rules.
\newblock {\em Phys. D}, 371:1--12, 2018.

\bibitem{MR3668381}
Erol Pek\"{o}z, Adrian R\"{o}llin, and Nathan Ross.
\newblock Joint degree distributions of preferential attachment random graphs.
\newblock {\em Adv. in Appl. Probab.}, 49(2):368--387, 2017.

\bibitem{MR4269210}
Delphin S\'{e}nizergues.
\newblock Geometry of weighted recursive and affine preferential attachment
  trees.
\newblock {\em Electron. J. Probab.}, 26:Paper No. 80, 56, 2021.

\bibitem{MR3617364}
Remco van~der Hofstad.
\newblock {\em Random graphs and complex networks. {V}ol. 1}.
\newblock Cambridge Series in Statistical and Probabilistic Mathematics, [43].
  Cambridge University Press, Cambridge, 2017.

\bibitem{MR4467842}
Tiandong Wang and Panpan Zhang.
\newblock Directed hybrid random networks mixing preferential attachment with
  uniform attachment mechanisms.
\newblock {\em Ann. Inst. Statist. Math.}, 74(5):957--986, 2022.

\end{thebibliography}
    \bibliographystyle{plain}

\end{document}